\renewcommand{\mathbf}{\mathbold}
\numberwithin{equation}{section}
\def\l@subsection{\@tocline{2}{0pt}{2.5pc}{5pc}{}}
\def\l@subsubsection{\@tocline{2}{0pt}{5pc}{7.5pc}{}}
\DeclareRobustCommand{\cev}[1]{%
  {\mathpalette\do@cev{#1}}%
}
\newcommand{\do@cev}[2]{%
  \vbox{\offinterlineskip
    \sbox\z@{$\m@th#1 x$}%
    \ialign{##\cr
      \hidewidth\reflectbox{$\m@th#1\vec{}\mkern4mu$}\hidewidth\cr
      \noalign{\kern-\ht\z@}
      $\m@th#1#2$\cr
    }%
  }%
}
\newcommand{\tight}{\hspace{-.5em}}
\newcommand{\ip}[1]{\left\langle#1\right\rangle}
\newcommand{\set}[1]{\left\{#1\right\}}
\DeclareFontFamily{U}{matha}{\hyphenchar\font45}
\DeclareFontShape{U}{matha}{m}{n}{
	<5> <6> <7> <8> <9> <10> gen * matha
	<10.95> matha10 <12> <14.4> <17.28> <20.74> <24.88> matha12
}{}
\DeclareSymbolFont{matha}{U}{matha}{m}{n}
\DeclareMathSymbol{\swedge}         {2}{matha}{"5E}
\DeclareMathSymbol{\svee}           {2}{matha}{"5F}
\newcommand{\ps}[1]{\mkern-.25mu\mathbin{\left(\mkern-3.5mu\left({#1}\right)\mkern-3.5mu\right)}}
\newcommand{\Z}{\mathbb{Z}}
\newcommand{\N}{\mathbb{N}}
\newcommand{\R}{\mathbb{R}}
\newcommand{\E}{\mathbb{E}}
\newcommand{\D}{\mathbb{D}}
\newcommand{\fT}{\mathfrak{T}}
\newcommand{\cP}{{\mathbf{P}}}
\newcommand{\cF}{\mathcal{F}}
\newcommand{\cH}{\mathcal{H}}
\newcommand{\cG}{\mathcal{G}}
\newcommand{\cL}{\Jlipmap}
\newcommand{\cX}{\mathcal{X}}
\newcommand{\cO}{\mathcal{O}}
\newcommand{\cI}{\mathcal{I}}
\newcommand{\Lip}{\mathrm{Lip}}
\newcommand{\Sh}{\mathsf{Sh}} 
\newcommand{\bT}{\mathbf{T}}
\newcommand{\bx}{\mathbf{x}}
\newcommand{\by}{\mathbf{y}}
\newcommand{\bz}{\mathbf{z}}
\newcommand{\bs}{\mathbf{s}}
\newcommand{\bt}{\mathbf{t}}
\newcommand{\br}{\mathbf{r}}
\newcommand{\bc}{\mathbf{c}}
\newcommand{\ba}{\mathbf{a}}
\newcommand{\bb}{\mathbf{b}}
\newcommand{\bv}{\mathbf{v}}
\newcommand{\bg}{\mathbf{g}}
\newcommand{\bH}{\mathbf{H}}
\newcommand{\bone}{\mathbf{1}}
\newcommand{\bzero}{\mathbf{0}}
\newcommand{\bA}{\mathbf{A}}
\newcommand{\bI}{\mathbf{I}}
\newcommand{\bomega}{{\boldsymbol{\omega}}}
\newcommand{\bphi}{{\boldsymbol{\phi}}}
\newcommand{\bsigma}{{\boldsymbol{\sigma}}}
\newcommand{\btau}{{\boldsymbol{\tau}}}
\newcommand{\ev}{\mathrm{ev}}
\newcommand{\id}{\mathrm{id}}
\newcommand{\sgn}{\mathrm{sgn}}
\newcommand{\bid}{\text{{\bf id}}}
\newcommand{\domdim}{d}
\newcommand{\coddim}{n}
\newcommand{\obx}{\overline{\bx}}
\newcommand{\oby}{\overline{\by}}
\newcommand{\oV}{\overline{V}}
\newcommand{\oI}{\overline{I}}
\newcommand{\oJ}{\overline{J}}
\newcommand{\oPhi}{\overline{\Phi}}
\newcommand{\lippath}{\Lip(\square^1, V)}
\newcommand{\lipmap}{\Lip(\square^\domdim, V)}
\newcommand{\smmapman}{C^\infty(\square^\domdim, X)}
\newcommand{\smmapKman}{C^\infty(|K|, X)}
\newcommand{\smmapR}{C^\infty(\square^\domdim, \R^\coddim)}
\newcommand{\Jac}{\text{\rm J}}
\newcommand{\Jlippath}{{\Lip}_\Jac(\square^1, V)}
\newcommand{\tllippath}{{\Lip}_{\mathrm{tl}}(\square^1, V)}
\newcommand{\Jlipmap}{{\Lip}_\Jac(\square^\domdim, V)}
\newcommand{\olipmap}{{\Lip}(\square^\domdim, \oV_d)}
\newcommand{\JlipmapW}{{\Lip}_\Jac(\square^\domdim, W)}
\newcommand{\Linfmap}{L^\infty(\square^\domdim, \Lambda^\domdim V)}
\newcommand{\idpspvspace}[3]{\bT_{\id, #1}^{#2}\ps{#3}} 
\newcommand{\pspvspace}[3]{\bT_{#1}^{#2}\ps{#3}}
\newcommand{\psphspace}[3]{\bH_{#1}^{#2}\ps{#3}}
\newcommand{\pvspace}[3]{\bT_{#1}^{#2}(#3)}
\newcommand{\phspace}[3]{\bH_{#1}^{#2}(#3)}
\newcommand{\intdom}[3]{D^{#1}_{#3}} 
\newcommand{\cintdom}[4]{\D^{#1, #2}_{#3, #4}} 
\newcommand{\ocintdom}[3]{\overline{\D}^{#1, #2}_{#3}} 
\newcommand{\ordsubset}[2]{\cI^{#1}_{#2}} 
\newcommand{\met}{\mu}
\newcommand{\mapact}[3]{#1_{#2}^{#3}}
\newcommand{\gnorm}{N} 
\newcommand{\dgnorm}{\nu} 
\newcommand{\GLV}{\mathrm{GL}(V)}
\newcommand{\GLHV}{\mathrm{GL}(\psphspace{\domdim}{}{V})}
\newcommand{\bpi}{\mathbf{\pi}}
\newtheorem{lemma}{Lemma}
\newtheorem{proposition}{Proposition}
\newtheorem{corollary}{Corollary}
\newtheorem{theorem}{Theorem}
\theoremstyle{definition}
\newtheorem{definition}{Definition}
\newtheorem{example}{Example}
\newtheorem{remark}{Remark}
\newenvironment{customthm}[1]
  {\innercustomthm}
  {\endinnercustomthm}
\renewcommand{\email}[2][]{%
  \ifx\emails\@empty\relax\else{\g@addto@macro\emails{,\space}}\fi%
  \@ifnotempty{#1}{\g@addto@macro\emails{\textrm{(#1)}\space}}%
  \g@addto@macro\emails{#2}%
}
\title{A Topological Approach to Mapping Space Signatures}
\author{Chad Giusti}
\address[CG]{Department of Mathematical Sciences, University of Delaware, Newark, DE 19716}
\email{cgiusti@udel.edu}
\author{Darrick Lee}
\address[DL]{\'Ecole Polytechnique F\'ed\'erale de Lausanne (EPFL) , Laboratory for Topology and Neuroscience, CH-1015 Lausanne, Switzerland}
\email{darrick.lee@epfl.ch}
\author{Vidit Nanda}
\email{nanda@maths.ox.ac.uk}
\author{Harald Oberhauser}
\address[VN, HO]{Mathematical Institute, University of Oxford, Andrew Wiles Building, Radcliffe Observatory Quarter,
Woodstock Rd, Oxford OX2 6GG}
\email{oberhauser@maths.ox.ac.uk}
\begin{document}

\begin{abstract}
 A common approach for describing classes of functions and probability measures on a topological space $\mathcal{X}$ is to construct a suitable map $\Phi$ from $\mathcal{X}$ into a vector space, where linear methods can be applied to address both problems. The case where $\mathcal{X}$ is a space of paths $[0,1] \to \mathbb{R}^n$ and $\Phi$ is the path signature map has received much attention in stochastic analysis and related fields. In this article we develop a generalized $\Phi$ for the case where $\mathcal{X}$ is a space of maps $[0,1]^d \to \mathbb{R}^n$ for any $d \in \mathbb{N}$, and show that the map $\Phi$ generalizes many of the desirable algebraic and analytic properties of the path signature to $d \ge 2$. The key ingredient to our approach is topological; in particular, our starting point is a generalisation of K-T Chen's path space cochain construction to the setting of cubical mapping spaces. 
\end{abstract}

\maketitle
\vspace{-30pt}

\tableofcontents

\vspace{-24pt}
\section{Introduction}
  It is classical, when studying non-linear real-valued functions defined on a topological space $\cX$, to first obtain a convenient representation $\Phi: \cX \to \cH$ of $\cX$ in a graded, possibly infinite-dimensional, vector space $\cH$. One then hopes to approximate any $f:\cX \to \R$ lying in a large class of functions by linear functionals on $\cH$ restricted to the image $\Phi(\cX)$. In a precise dual sense, graded descriptions of probability measures on $\cX$ are given by the expected values of their pushforwards along $\Phi$. These two properties --- density of linear functionals in a given function class and the expected value characterization of measures --- are also known as \emph{universality} and \emph{characteristicness} of $\Phi$ respectively. In addition to these two analytic properties, it is often desirable to also capture the structure of $\cX$, if any, in its image $\Phi(\cX)$.

In this article we develop a representation $\Phi$ for when \[\cX \subset C(\square^\domdim, V)\] is a (sufficiently regular) set of continuous maps from the $d$-dimensional unit cube $\square^\domdim \coloneqq [0,1]^\domdim$ into a real, finite-dimensional vector space $V$. Restricted to $\domdim=0$ or $\domdim=1$, our map $\Phi$ reduces to well-known embeddings which we recall below; our main contribution is to generalize these ideas to the case where $\domdim \geq 2$.

\subsection{The Moment Map \texorpdfstring{$(d=0)$}{d=0}} \label{ssec:moment_map}
We identify $\cX \subset C(\square^0,V)\equiv V$, where $\square^0 \cong \{*\}$.
The {\em moment map}
\[
\Phi:V \to \pspvspace{0}{}{V},\quad\Phi(\bx) := \left(1,\bx,\ldots, \frac{\bx^{\otimes m}}{m!},\ldots\right)_{m \geq 0}
	\]
  embeds $V$ into its power series tensor algebra $\pspvspace{0}{}{V} := \prod_{m \geq 0}(V^{\otimes m})$.
  This embedding satisfies a host of desirable algebraic and analytic properties.
	On the algebraic side, we mention:
	\begin{enumerate}
		\item [{\bf (AL1)}] The map $\Phi$ constitutes a {group homomorphism} from the abelian group $(V,+)$ to the group-like elements in the tensor algebra in the sense that the identity $\Phi(\bx+\by) = \Phi(\bx) \otimes \Phi(\by)$ holds for all $\bx,\by \in V$. 
		\item [{\bf (AL2)}] Finite linear functionals $\ell:\pvspace{0}{}{V} \to \R$, where $\pvspace{0}{}{V} := \bigoplus_{m \geq 0}(V^{\otimes m})$ is the tensor algebra, form a {point-separating algebra} for the image of $\Phi$; in particular, for each $\bx \in V$ we have
		\[
		\ip{\ell,\Phi(\bx)} \cdot \ip{\ell',\Phi(\bx)} = \ip{\ell'',\Phi(\bx)},
		\]
		where $\ell''$ is the shuffle product between $\ell$ and $\ell'$.
		\item [{\bf (AL3)}] The moment map is {equivariant} with respect to the natural actions of the general linear group $\GLV$ on its domain and codomain: in other words,
		\[
		\Phi(A\bx) = \bA\Phi(\bx)
		\]
		for all invertible linear maps $A:V \to V$ and vectors $\bx \in V$.\footnote{Here the action of $\bA$ on $\pspvspace{0}{}{V}$ is defined as follows: $\bA(\bv_1 \otimes \cdots \otimes \bv_m)$ equals $A\bv_1 \otimes \cdots \otimes A\bv_m$ for elementary tensors; then we extend to the entire tensor algebra by linearity.} 
	\end{enumerate}
	In addition, $\Phi$ also satisfies three useful analytic properties where $\cX \subset V$ is assumed to be a compact subset for {\bf (AN2)} and {\bf (AN3)}:
	\begin{enumerate}
		\item [\bf (AN1)] The map $\bx \mapsto \Phi(\bx)$ is continuous and the sequence of tensors $\Phi(\bx)$ {decays factorially}, meaning that the component $\Phi_m(\bx) \in V^{\otimes m}$ is bounded as
		\[
		\|\Phi_m(\bx)\| \leq C^m/m!,
		\] 
        for some constant $C = C(\bx)$ that is independent of $m$ .
		\item [\bf (AN2)] Every continuous map $f:\cX \to \R$ can be uniformly approximated by linear functionals defined on $\Phi(\cX)$. Thus, $\Phi$ is {universal} --- for each such $f$ and real number $\epsilon > 0$, there exists a finite linear functional $\ell \in \pvspace{0}{}{V}$ with
		\[
		\sup_{x \in \cX} \big| f(\bx) - \ip{\ell,\Phi(\bx)} \big| < \epsilon.
		\]
		\item [\bf (AN3)] Finally, $\Phi$ is {characteristic} in the sense that the law $\mu$ of any bounded random variable $X$ taking values in $\cX$ is completely determined by its $\Phi$-moments. Namely, we have an injective map
		\[
		\mu \mapsto \E_{X \sim \mu}[\Phi(X)] := \left(1,\E[X], \ldots, \frac{\E[X^{\otimes m}]}{m!},\ldots\right)_{m \geq 0}.
		\]
	\end{enumerate}
  These three analytic properties are elementary consequences of the algebraic properties (via the Stone--Weierstrass theorem) or even just the definition of $\Phi$. Nevertheless, they form a central part in the toolbox of any analyst or probabilist: {\bf (AN1)} and {\bf (AN2)} generalize the classic Taylor expansion from smooth to continuous functions and {\bf (AN3)} expresses that the moments of a compactly supported measure are sufficient to characterize it (equivalently, the moment-generating function characterizes the law of any bounded vector-valued random variable). 
   \noindent
\subsection{The Path Signature \texorpdfstring{$(d=1)$}{d=1}} \label{ssec:path_signature}
When $\cX = \lippath \subset C(\square^1,V) $ is the set of Lipschitz continuous paths $\bx:\square^1 \to V$, we define the {\emph{path signature}}:
	\[
	\Phi:\lippath \to \pspvspace{1}{}{V},\quad \Phi(\bx) \coloneqq \left(1,\int_{\Delta^1_\pi} d \bx,  \ldots, \int_{\Delta^m_\pi} d \bx^{\otimes m},\ldots \right)_{\substack{m \geq 0\\\pi \in \Sigma_m}},
	\]
where $d \bx^{\otimes m} \coloneqq d \bx(t_1) \otimes \cdots \otimes d \bx(t_m)$, and the integral is the Riemann-Stieltjes integral over the permuted $m$-simplex 
\begin{align*}
\Delta^m_\pi \coloneqq \{0 \leq t_{\pi(1)} < \ldots < t_{\pi(m)} \leq 1 \} \subset \square^m,
\end{align*}
where $\pi \in \Sigma_m$, the symmetric group on $m$ elements. Here, $\pspvspace{1}{}{V} \coloneqq \prod_{m\geq 0} \prod_{\Sigma_m} V^{\otimes m}$ is the \emph{power series permutation tensor space}. This formulation is a variation of the classical definition of the path signature
\begin{align*}
    \Phi^{\id}:\lippath \to \pspvspace{\id, 1}{}{V}, \quad \Phi^{\id} \coloneqq \left(1,\int_{\Delta^1} d \bx,  \ldots, \int_{\Delta^m} d \bx^{\otimes m},\ldots \right)_{m \geq 0},
\end{align*}
which only uses the identity permutations, where $\Delta^m \coloneqq \Delta^m_{\id}$ is the standard $m$-simplex and $\pspvspace{\id, 1}{}{V} \coloneqq \prod_{m\geq 0} V^{\otimes m}$ is once again the ordinary power series tensor algebra. These two formulations of the path signature, $\Phi$ and $\Phi^\id$, provide the same information, but the permutation structure will be crucial for the generalization to $d \ge 2$. 

The path signature satisfies suitably modified versions of the aforementioned properties of the moment map, along with a new invariance property {\bf (AL0)}, which allows us to treat the path signature as an embedding on a space of equivalence classes of Lipschitz paths. In particular, the modified algebraic properties are:\footnote{While all of these properties can be equivalently stated for the permutation-augmented signature $\Phi$, we state the properties for the more familiar classical path signature $\Phi^\id$.}
\begin{itemize}
    \item [\bf (AL0)] The equality $\Phi^\id(\bx) = \Phi^\id(\by)$ holds for two paths $\bx, \by \in \lippath$ if and only if they differ by a specific type of reparametrisation~\cite{hambly_uniqueness_2010} (called {\em tree-like equivalence}, denoted $\bx \sim_\text{tl} \by$).
    \item [\bf (AL1)] The map $\Phi^\id$ constitutes a homomorphism between the group of tree-like equivalence classes of Lipschitz paths (equipped with the concatenation product) $(\tllippath,*)$ and the group-like elements in the tensor algebra, in other words, $\Phi^\id(\bx * \by) = \Phi^\id(\bx) \otimes \Phi^\id(\by)$ holds for all $\bx, \by \in \lippath$.
    \item [\bf (AL2)] Finite linear functionals $\ell \in \pvspace{\id, 1}{}{V}$, where $\pvspace{\id, 1}{}{V} \coloneqq \bigoplus_{m \geq 0} V^{\otimes m}$ is the tensor algebra, form a subalgebra of functions; for each $\bx \in \lippath$, we have
		\[
        \langle\ell,\Phi^\id(\bx)\rangle \cdot \langle\ell',\Phi^\id(\bx)\rangle = \langle\ell'',\Phi^\id(\bx)\rangle,
		\]
	where $\ell''$ is the shuffle product of $\ell$ and $\ell'$.
	\item [\bf(AL3)] The path signature is equivariant with respect to the natural action of $\GLV$ on $\lippath$: in other words,
	\[
	\Phi^\id(A\bx) = \bA \Phi^\id(\bx)
	\]
	for all $A \in \GLV$ and $\bx \in \lippath$. Furthermore, there is a natural $\Z_2$ action on $\lippath$ given by time reversal, and the path signature is also equivariant with respect to this action; in particular, we have
	\[
	\Phi^\id(\bx^\tau) = \Phi^\id(\bx)^{-1},
	\]
	where $\bx^\tau$ denotes time reversal, and the right side is given by the tensor inverse.
\end{itemize}

Crucially, the analytic properties {\bf (AN1)-(AN3)} are also satisfied as written by $\Phi$ provided that we restrict to compact subsets $\cX$ of the quotient $\tllippath$ by tree-like equivalence for {\bf (AN2)} and {\bf (AN3)}.
Compactness of $\cX$ is typically too strong an assumption in this context, since $\lippath$ is not even locally compact; however, a generic normalization can be used to derive a robust version of $\Phi$, which in turn allows one to drop the compactness hypothesis \cite{chevyrev_signature_2018}.

  \subsection{The Mapping Space Signature \texorpdfstring{$(d \ge 2)$}{d ge 2}}
  The main contribution of this paper is to provide and study a natural extension of the moment map and the path signature to the realm of maps $\cX \subset C(\square^d,V)$ from higher-dimensional cubes ($d\ge 2$) into $V$.
  Concretely, we introduce the {\bf mapping space signature}
  \begin{align}
  \label{eq:intro_ms_signature}
    \Phi: \lipmap \rightarrow \pspvspace{\domdim}{}{V}, \quad \Phi(\bx) =\left(1, \int_{\intdom{1}{\domdim}{\bpi}} \hat{d} \bx(\bt_1),  \ldots, \int_{\intdom{m}{\domdim}{\bpi}} \hat{d}\bx^{\otimes m}, \ldots    \right)_{\substack{m\ge 0\\ \bpi \in \Sigma_m^\domdim}}, 
  \end{align}
  where $\hat{d} x^{\otimes m} \coloneqq \hat{d} \bx(\bt_1) \otimes \cdots \otimes \hat{d} \bx(\bt_m)$, with $\hat{d}$ being the \emph{Jacobian minor operator}, which reduces to the ordinary differential in the case of $\domdim = 1$. 
  The codomain of $\Phi$ is a $\domdim$-dependent graded vector space $\pspvspace{\domdim}{}{V}$, which is a permutation-augmented variant of the tensor algebra $\pspvspace{0}{}{V}$. In particular, $\pspvspace{\domdim}{}{V} \coloneqq \prod_{m \geq 0} \prod_{\Sigma_m^d} \left(\Lambda^\domdim V\right)^{\otimes m}$, where its subfactors are indexed by $\domdim$-tuples $\bpi = (\pi_1,\ldots,\pi_\domdim)$ of elements of the symmetric group $\Sigma_m$.

In particular, for a Lipschitz continuous map $\bx \in \lipmap$, the $\bpi$ component of $\Phi(\bx)$ is a generalised iterated integral
	\[
	\Phi_m^\bpi(\bx) = \int_{\intdom{m}{\domdim}{\bpi}} \hat{d} \bx(\bt_1) \otimes \hat{d} \bx(\bt_2) \otimes \cdots \otimes \hat{d} \bx(\bt_m) \in \left(\Lambda^\domdim V\right)^{\otimes m}.
	\]
	Here $\hat{d}\bx$ is a top-dimensional differential form on $\square^\domdim$ valued in $\Lambda^\domdim V$. The domain of integration is defined as the product of permuted $m$-simplices: $\intdom{m}{\domdim}{\bpi} \coloneqq \Delta^{m}_{\pi_1} \times \cdots \times \Delta^m_{\pi_\domdim}$.

  The domain $\cX$ of $\Phi$ is the space $\lipmap$ of all Lipschitz-continuous maps from the standard $\domdim$-cube to a finite dimensional vector space $V$.
  In fact, we immediately show that $\Phi$ is invariant with respect to \emph{Jacobian equivalence}, $\sim_\Jac$, of maps (generalizing translation invariance in the $\domdim=1$ case), and we can instead view the domain of $\Phi$ to be
  \begin{align*}
      \Jlipmap \coloneqq \lipmap/\sim_\Jac,
  \end{align*}
  the space of Jacobian equivalence classes of Lipschitz maps.
  Naive generalizations of the $1$-variation and Lipschitz metrics from the path case, $\domdim=1$, induce a topology on $\cX$ for the general higher-dimensional case, $\domdim \ge 2$, that is too coarse to obtain even continuity of $\Phi$.  
  However, we introduce Jacobian variants of these metrics which measures absolute volume increments of map $\bx:\square^d \to V$. In particular, 
  \begin{itemize}
      \item the \emph{Jacobian $1$-variation} measures the total volume of a map analogous to how classical variation norms pick up the total length of a path;
      \item the \emph{Jacobian Lipschitz constant} measures the maximum infinitesimal volume of a map analogous to how the Lipschitz constant recovers the maximum infinitesimal length of a path.
  \end{itemize}
  The codomain of $\Phi$ is shown to be the Hilbert space $\psphspace{\domdim}{}{V} \subset \pspvspace{\domdim}{}{V}$ of finite norm elements.
  With these metrics on the domain and the Hilbert space norm on the codomain, we can show that the mapping space signature $\Phi$ satisfies suitably modified versions of {\bf (AL0) - (AL3)} and {\bf (AN1) - (AN3)}, outlined in the following section. 
  
\subsection{Outline}

The definition of the mapping space signature in~\Cref{eq:intro_ms_signature} is motivated by Chen's foundational work on iterated integral cochain models for path spaces and loop spaces~\cite{chen_iterated_1977}. We begin in~\Cref{sec:chen_construction} with a brief high level discussion of a cubical reformulation of Chen's construction for mapping spaces~\cite{patras_cochain_2003, ginot_chen_2010} which directly leads to our definition. We also provide a more detailed exposition in~\Cref{apx:cubical_chen_construction}.

Next, in~\Cref{sec:lipschitz}, we introduce Lipschitz mapping spaces, which is used as the domain for $\Phi$, as well as Jacobian operators, equivalence classes, and metrics which are heavily used throughout the article. \medskip

In~\Cref{sec: mapping space signature} to~\Cref{sec:ms_shuffle}, we formally define the mapping space signature and prove some structural analytic and algebraic properties, which are summarized in our first main theorem, which is stated formally in~\Cref{thm:formal_ms_properties}. Here, {\bf (AL0')} and {\bf (AL1')} denote modified versions of these properties from the path signature.

\begin{customthm}{A}[Informal]
\label{thm:ms_properties}
    The mapping space signature is/has:
    \begin{enumerate}
        \item [\bf (AL0')] coordinate-wise reparametrization invariant;
        \item [\bf (AL1')] a modified Chen's identity (for a restricted variant of the signature);
        \item [\bf (AL2)] a generalized shuffle product structure;
        \item [\bf (AL3)] equivariant with respect to both the hyperoctahedral group, $B_\domdim$, action on the domain and $\GLV$ action on the codomain.
    \end{enumerate}
\end{customthm}

We define the mapping space signature in~\Cref{sec: mapping space signature} along with continuity properties.

We address the higher dimensional analogue of {\bf (AL0')} in~\Cref{sec:ms_invariance}. In particular, we prove the above coordinate-wise reparametrization invariance property in~\Cref{prop:ms_reparametrization_invariance}, and discuss a class of maps, built from tree-like paths, which have trivial mapping space signature in~\Cref{ssec:trivial_signature}. While we do not obtain the full invariance structure of $\Phi$, we hope to address this in future work. 

\Cref{sec:equivariance} addresses {\bf (AL3)}, which can be completely generalized from the $\domdim=1$ setting. In particular, the $\Z_2$ time reversal action for paths generalizes to the hyperoctahedral group, $B_\domdim$, action on $\square^\domdim$, and equivariance is shown in~\Cref{prop:Bd_equivariance}. Furthermore, linear maps $A \in L(V,W)$ to another finite-dimensional vector space $W$ induce maps $A: \Jlipmap\rightarrow\JlipmapW$ and a map $\bA: \psphspace{\domdim}{}{V} \rightarrow \psphspace{\domdim}{}{W}$ between the corresponding domains of the signature. We show that the signature is compatible with these induced maps in~\Cref{prop:cod_equivariance}.

~\Cref{sec:ms_shuffle} develops the generalized shuffle product structure of {\bf (AL2)} in~\Cref{thm:ms_shuffle}, showing that finite linear functionals on $\psphspace{\domdim}{}{V}$ form a subalgebra of continuous functions on $\Jlipmap$.

Finally, \Cref{sec:ms_composition} develops a modified version of Chen's identity {\bf (AL1')}, and discusses the obstructions to a full generalization. For maps $\bx, \by \in \lipmap$, there are $\domdim$ natural concatenation products. We show in~\Cref{thm:composition} that for a restricted variant of the mapping space signature, called the \emph{identity mapping space signature} (introduced in~\Cref{ssec:identity_signature}), we can derive a modified Chen's identity for each concatenation product. This completes the proof of~\Cref{thm:ms_properties}. \medskip

In~\Cref{sec:ms_parametrized_injectivity} and~\Cref{sec:univ_char}, we focus on the analytic properties of universality {\bf (AN2)} and characteristicness {\bf (AN3)}. Because we do not have a full characterization of the invariances of $\Phi$\footnote{Suppose we can characterize an equivalence relation $\sim$ on $\lipmap$ such that for $\bx, \by \in \lipmap$, we have $\bx \sim \by$ if and only if $\Phi(\bx) = \Phi(\by)$. In this case, the signature $\Phi$ is injective on $\lipmap/\sim$, and we can directly show universality and characteristicness without appending the parametrization, as is done in~\cite{chevyrev_signature_2018}.}, we append the parametrization of maps,
\begin{align*}
    \iota: \lipmap \rightarrow \Lip(\square^\domdim, \oV_\domdim), \quad \iota(\bx)(\bs) \coloneqq (\bs, \bx),
\end{align*}
where $\oV_\domdim \coloneqq \R^\domdim \times V$, in order to obtain injectivity of the \emph{parametrized signature} $\oPhi \coloneqq \Phi \circ \iota$. These results are summarized in our second main theorem, stated formally in~\Cref{thm:parametrized_univ_char}.

\begin{customthm}{B}[Informal]
\label{thm:intro_univ_char}
    The normalized parametrized mapping space signature is:
    \begin{enumerate}
        \item [\bf (AN1)] continuous, injective and has factorial decay;
        \item [\bf (AN2)] universal to the space of continuous bounded functions on $\Jlipmap$;
        \item [\bf (AN3)] characteristic to the space of finite regular Borel measures on $\Jlipmap$.
    \end{enumerate}
\end{customthm}


We introduce and study the parametrized signature in~\Cref{sec:ms_parametrized_injectivity} and address the {\bf (AN1)} properties. We prove that it is injective on $\Jlipmap$ in~\Cref{thm:parametrized_injectivity_first}, and show that it is continuous in~\Cref{prop:parametrized_signature_continuity}.

Finally in~\Cref{sec:univ_char}, we discuss the universal {\bf (AN2)} and characteristic {\bf (AN3)} properties for the parametrized signature without compactness assumptions. In particular, we recall the method from~\cite{chevyrev_signature_2018} involving normalization and the specific choice of the \emph{strict topology} for continuous bounded functions, concluding with a proof of~\Cref{thm:intro_univ_char}.

 \subsection{Related Work} 
We do not even attempt to summarize the use of classic monomials ($d=0$) in analysis but we give some pointers to the case of iterated path integrals ($d=1$) below. 

The study of such iterated integrals goes back at least to work of Volterra in the nineteenth century. Certain aspects of the path signature map $\bx \mapsto \Phi(\bx)$, which associates a path with a sequence of tensors given as iterated integrals, are studied under different names among different communities; these include but are not limited to algebraic topology, physics, number theory, control theory, and stochastic analysis.
Certain aspects and variations of this series of iterated integrals are studied under the name of Dyson series, time ordered integral, Magnus expansion, chronological exponential, Chen-Fliess series, Wiener-Ito chaos expansion, and possibly many others that we are not aware of. We refer to the signature of a path by $\Phi$ throughout this subsection.

Our general construction of $\Phi$ for general $\domdim \geq 2$ is heavily influenced by the use of the path signature $d=1$ by two communities in particular, namely topology and stochastic analysis/rough paths: 
\begin{enumerate}
\item
    K-T Chen~\cite{chen1973iterated} introduced and developed a cochain algebra model based on iterated integrals to determine the de Rham cohomology of the loop space $\Omega M$ of simply connected manifolds $M$ via variants of the Hochschild complex.  The fundamental construction here is an assignment (for all $m \geq 1$) of the type
    \begin{align*}\label{eq:Chen Lift}
	\left[\begin{matrix}
	\text{differential forms $(\omega_1,\ldots,\omega_m)$} \\
	\text{on $M$ with $\deg \omega_i = q_i$}
	\end{matrix}\right] \stackrel{\int}{\longrightarrow}
	\left[\begin{matrix}
	\text{a differential form $\int \omega_1 \cdots \omega_m$} \\
	\text{on $\Omega M$ of degree $\sum_i q_i - m$}
	\end{matrix}\right].
    \end{align*}
    However, our motivation is not to understand the de Rham cohomology of the loop space $\Omega M$. From this topological perspective, the path signature can be viewed as specific $0$-cochains ($0$-forms) of this model where $M = V$ is a finite-dimensional vector space. These $0$-forms correspond to the iterated integration of the standard $1$-forms on $V$. Rather, our motivation is to exploit this connection between Chen's construction and the path signature to introduce a higher dimensional analogues of the signature, and study its algebraic and analytic properties.
    
    Towards this end, our study begins with a generalization of Chen's construction to mapping spaces~\cite{patras_cochain_2003, ginot_chen_2010}, which provides an iterated integral cochain algebra model for mapping spaces into manifolds. This construction is facilitated by the higher Hochschild complex~\cite{pirashvili_hodge_2000, ginot_chen_2010}, and determines the de Rham cohomology of mapping spaces provided the manifold in the codomain is sufficiently connected. In particular, we develop a cubical variant of the simplicial constructions developed in~\cite{patras_cochain_2003, ginot_chen_2010}, and restrict our focus to finite-dimensional vector spaces $M = V$. By extracting certain $0$-cochains of this model corresponding to the iterated integration of standard $\domdim$-forms, we arrive at our notion of the mapping space signature.

\item
  Lyons~\cite{lyons_differential_1998} initiated the use of signatures in stochastic analysis in order to understand stochastic processes.
  This in turn builds upon many insights that were made by control theorists in the 1980's \cite{fliess1981, sussmann1986}, results from non-commutative algebra about free Lie algebras \cite{reutenauer93}, and expansions of stochastic differential equations \cite{benarous89}. 
  For example, the above two mentioned properties of universality and characteristicness of signatures play a central theme since they give a structured description of functions and measures on a genuinely infinite-dimensional space, namely a space of paths.  
  Consequently, many classical applications for measures on finite-dimensional spaces can be extended to path spaces; for instance, path signatures naturally extend the classical method of moments for parameter estimation from vector to path-valued random variables (i.e.~stochastic processes) \cite{papavasiliou2011}; they allow us to construct cubature formulas for probability measures on path \cite{kusuoka2003, cubature2004}; and they induce computable metrics for laws of stochastic processes~\cite{chevyrev_signature_2018}. 
The underlying algebraic properties, such as ({\bf{AL0})-(\bf{AL3}}) are essential to derive these results.
\end{enumerate}

Several works have studied extensions of rough path theory from $d=1$ to $d\ge 2$; in particular the case $d=2$ has received attention.
For example, \cite{chouk2014rough,chouk2015skorohod} extend the notion of controlled rough paths to the case $d=2$ to derive change of variable formula for Brownian sheets.
Similar in spirit, however with a more differential geometric focus are the works \cite{alberti2019integration,stepanov_towards_2020}, which develops an integration theory of rough differential forms.

Our motivation is very different from all these works: instead of directly trying to define a rough integration theory, we focus on the ``universal embedding'' that the signature $\Phi$ provides for smooth paths (resp.~smooth maps in the general $d\ge 1$ case). Lyons' original approach to rough path theory can be outlined in two steps:
\begin{enumerate}
    \item Establish fundamental analytic and algebraic properties of the signature.
    \item Take closures of the space of smooth paths, equipped with appropriate topologies, to handle non-smooth paths.
\end{enumerate}
For the case of $\domdim \ge 2$, the first step is already nontrivial, and forms the focus of this article. 
The difficulty is due to the fact that one can no longer rely on classical results of control theory and non-commutative algebra; even the definition of the codomain and the definition of an appropriate extension of variation metrics do not seem to follow from classical results.
Nevertheless, having a well-defined universal and characteristic embedding opens the door for taking closures in appropriate variation topologies, which would extend $\Phi$ to genuinely rough maps.
Although we do not pursue this second step here, we draw attention to the multi-dimensional extension of Young integration \cite{zust_integration_2011} which may become very useful for this task, analogous to the role that classical Young integration plays in the definition of classical rough path integrals \cite[Chapter 1]{lyons_differential_2007}. 

\addtocontents{toc}{\protect\setcounter{tocdepth}{1}}
{\footnotesize
	\subsection*{Acknowledgements}	
	We thank Ilya Chevyrev for helpful discussions. CG is supported by NSF-1854683 and AFOSR FA9550-21-1-0266. DL was funded by the United States Office of the Assistant Secretary of Defence Research and Engineering, ONR N00014-16-1-2010 and NSERC PGS-D3 scholarship. VN's work is supported by EPSRC grant EP/R018472/1. HO is supported by the Oxford-Man Institute and DataSig, EPSRC grant EP/S026347/1.
}

\clearpage
\addtocontents{toc}{\protect\setcounter{tocdepth}{2}}
\subsection{Notation}

Throughout this article, we will use boldface symbols to denote a vectors of objects such as $\bs = (s_1, \ldots, s_\domdim) \in \square^\domdim$. We fix $V$ to be a finite-dimensional real vector space, where $\dim(V) = \coddim$. Our main definitions and results are stated coordinate-free but for some arguments it is convenient to work in coordinates. We will assume that $V$ is equipped with an inner product structure, and if a basis is needed, we will let $\{e_1, \ldots, e_{\coddim}\}$ denote an orthonormal basis for $V$.

\renewcommand{\arraystretch}{1.2}
\begin{center}
\begin{longtable}{  m{0.13\textwidth}  m{0.73\textwidth} m{0.07\textwidth}  } 
  \Xhline{1pt}
  Symbol & Meaning & Page \\ 
  \Xhline{1pt}
  \multicolumn{3}{c}{General}\\
  \hline
    $\square^{\domdim}$ & the unit $d$-cube, $\square^{\domdim} \coloneqq [0,1]^d$ &  \\
    $\Delta^m$ & the standard $m$-simplex & \\
    $\Delta^m_\pi$ & the $\pi$-permuted $m$-simplex for $\pi \in \Sigma_m$ & \pageref{eq:permuted_m_simplex}\\
    $V$ & finite dimensional real vector (Hilbert) space $V \cong \R^n$ & \\
    $\oV_\domdim$ & augmented vector (Hilbert) space $\oV_\domdim \cong \R^\domdim \times V$ & \pageref{eq:ov_def}\\
    $\Lambda^\domdim V$ & degree $\domdim$ exterior algebra of $V$ & \pageref{eq:exterior_algebra}\\ 
    $[n]$ & the finite set $[n] \coloneqq \{1, \ldots, n\} $ &  \\ 
    $\cO_{\domdim, \coddim}$ & set of order-preserving injections $P: [\domdim] \rightarrow [\coddim]$ & \pageref{eq:order_preserving_injections}\\
    $\Sigma_m$ & the symmetric group on $m$ elements & \\
    $\Sh(p,q)$ & the set of $(p,q)$-shuffles; $\Sh(p,q) \subset \Sigma_{p +q}$ & \pageref{eq:shuffle_def}\\
    $\cev{\pi}$ & the reversal of a permutation $\pi \in \Sigma_m$ & \pageref{eq:reversal_def}\\
    $B_\domdim$ & hyperoctahedral group of $\domdim$-cube & \pageref{eq:hyperoctahedral_def}\\
    $\delta_\nu$ & the graded scaling by $\nu \in \R$; a linear map $\delta_\nu: \cH \rightarrow \cH$ & \pageref{eq:graded_scaling_def}\\
    $N$ & graded normalization & \pageref{eq:graded_normalization_def} \\
    
  \hline
  \multicolumn{3}{c}{Function Spaces}\\
  \hline
  $C(X,Y)$ & continuous functions &\\
  $C^\infty(X,Y)$ & smooth functions & \\
  $C_b(X,Y)$ & continuous bounded functions & \\
  $\Lip(X,Y)$ & Lipschitz continuous functions & \pageref{eq:lipschitz_mapping_def}\\
  $\Lip_\Jac(X,V)$ & Jacobian equivalence classes of Lipschitz functions & \pageref{eq:Jlipmap_def}\\
  \hline
  \multicolumn{3}{c}{Differentials and Jacobians}\\
  \hline
  $d \bx$ & differential of $\bx \in \lipmap$ & \pageref{eq:differential_def}\\
  $\hat{d} \bx $ & Jacobian minor operator of $\bx \in \lipmap$ & \pageref{eq:jacobian_minor_operator_d}\\
  $\hat{d}^P \bx $ & projected Jacobian minor operator of $\bx \in \lipmap$ for $P \in \cO_{\domdim,\coddim}$ & \pageref{eq:projected_jac_minor_d_def}\\
  $J[\bx_P]$ & Jacobian minor of $\bx \in \lipmap$ for $P \in \cO_{\domdim,\coddim}$ & \pageref{eq:jacobian_minor_def}\\
  $J[\bx]$ & Jacobian minor operator for Lipschitz maps & \pageref{eq:jacobian_minor_operator_J}\\
  $\mu_1$ & $1$-Jacobian variation metric & \pageref{def:jacobian_metrics} \\
  $\mu_\infty$ & Jacobian Lipschitz metric & \pageref{def:jacobian_metrics}\\
  \hline
  \multicolumn{3}{c}{Mapping Space Signature}\\
  \hline
  $\Phi$ & mapping space signature & \pageref{eq:mapping_signature_def}\\
  $\Phi_W$ & mapping space signature restricted to $W \subset \cO_{\domdim, \coddim}$ & \pageref{eq:signatureW_def}\\
  $\oPhi$ & parametrized mapping space signature & \pageref{eq:parametrized_signature_def}\\
  $S$ & path signature ($\Phi^\id$ for $\domdim=1$) & \pageref{eq:path_signature_def}\\
  $\intdom{m}{\domdim}{\bpi}$ & product of permuted $m$-simplices (integration domain of signature) & \pageref{eq:Dmd_domain}\\
  $\pvspace{\domdim}{(m)}{V}$ & degree $m$ permutation tensor space & \pageref{eq:degm_perm_tens_space_def}\\
  $\pspvspace{\domdim}{}{V}$ & power series permutation tensor space & \pageref{eq:ps_perm_tens_space_def}\\
  $\phspace{\domdim}{}{V}$ & permutation Hilbert space & \pageref{eq:perm_hilb_space_def}\\
  $\psphspace{\domdim}{}{V}$ & power series permutation Hilbert space & \pageref{eq:ps_perm_hilb_space_def} \\
  $\idpspvspace{d}{}{V}$ & identity tensor space & \pageref{eq:identity_ms_sig} \\
  $\bx^\btau_\sigma$ & action of $(\btau, \sigma) \in B_\domdim$ on $\bx \in \square^\domdim$ & \pageref{eq:Bd_lip_action}\\
  $\bpi^\btau_\sigma$ & action of $(\btau, \sigma) \in B_\domdim$ on $\bpi \in \Sigma_m^\domdim$ & \pageref{eq:Bd_perm_action} \\
  $\bx *_j \by$ & the $j$-composition of $\bx, \by \in \lipmap$ (or $\Jlipmap$) & \pageref{eq:ms_composition_formula}\\
  \hline
  \multicolumn{3}{c}{Topology (only used in~\Cref{sec:chen_construction} and~\Cref{apx:cubical_chen_construction})}\\
  \hline
  $\Omega^\bullet(X)$ & de Rham complex of $X$ & \pageref{eq:deRham_def}\\
  $\ordsubset{\domdim}{p}$ & $\domdim$-ordered subsets of $[p]$ & \pageref{def:d_ordered_subset}\\
  $d_i^\epsilon$ & cubical face map & \pageref{eq:face} \\
  $s_i$ & cubical degeneracy map & \pageref{eq:degeneracy}\\
  $g_i$ & cubical connection map & \pageref{eq:connection}\\
  $\delta_i^\epsilon$ & cubical coface map & \pageref{eq:coface}\\
  $\sigma_i$ & cubical codegeneracy map & \pageref{eq:codegeneracy}\\
  $\gamma_i$ & cubical coconnection map & \pageref{eq:coconnection}\\
  $|K|$ & geometric realization of a cubical set $K_\bullet$ & \pageref{eq:geometric_realization}\\
  $\eta_a$ & the evaluation map with respect to $a \in K_p$, $\eta_a : \square^p \rightarrow |K|$ & \pageref{eq:evaluation_map_a} \\
  $\eta_\bI$ & the evaluation map with respect to $\bI \in \ordsubset{\domdim}{p}$, $\eta_\bI : \square^p \rightarrow \square^\domdim$ & \pageref{eq:evaluation_map_I} \\
  $Z^\domdim_\bullet$ & standard cubical model for $\square^\domdim$ & \pageref{eq:Zbullet_def}\\
  $CH_\bullet^{K_\bullet}(A)$ & cubical Hochschild complex of $A$ over a cubical set $K_\bullet$ & \pageref{eq:cubical_hochschild_def}\\
  $\ev_{K_\bullet, p}$ & degree $p$ evaluation map with respect to $K_\bullet$ & \pageref{eq:ev_Kbullet_def}\\
  $\ev_{\domdim,p}$ & restricted degree $p$ evaluation map with respect to $Z^\domdim_\bullet$ & \pageref{eq:evaluation_dp_def}\\
  
  \Xhline{1pt}
\end{longtable}
\end{center}

\section{From Chen's Construction to Mapping Space Signatures}\label{sec:chen_construction}

The mapping space signature studied in this paper is motivated by returning to the topological origins of the path signature in Chen's iterated integral cochain construction~\cite{chen_iterated_1977}. Indeed, the path signature arises as certain $0$-cochains in Chen's construction for path spaces~\cite{giusti_iterated_2020}, which suggests the study of the $0$-cochains in a generalization of Chen's construction for mapping spaces~\cite{patras_cochain_2003, ginot_chen_2010} to develop an extension of the path signature for maps from higher dimensional domains. In this section, we provide a brief, high-level overview of a cubical reformulation of Chen's construction for mapping spaces and defer precise details to~\Cref{apx:cubical_chen_construction}. \medskip

Chen's iterated integral cochain construction provides a method to turn a collection of differential forms $\omega_1, \ldots, \omega_m \in \Omega^\bullet(X)$ on the manifold $X$ into a differential form $\int \omega_1 \ldots \omega_m \in \Omega^\bullet(\smmapman)$ on the smooth mapping space\footnote{Chen's construction is formally defined only for smooth mapping spaces, but we return to the lower regularity setting of Lipschitz maps in the remainder of the paper once the definition is established. Furthermore, the smooth structure on $\square^\domdim$ is given by the piecewise smooth structure with respect to the simplicial decomposition of the cube; see~\Cref{def:simplicial_smooth_structure} for details.} $\smmapman$ where $\Omega^\bullet(X)\label{eq:deRham_def}$ denotes the de Rham complex of $X$. This procedure uses a combinatorial model $Z^\domdim_\bullet$ of the domain $\square^\domdim$ in the form of a \emph{cubical set} (\Cref{def:cubical_set}), which provides a hierarchical description of $\square^\domdim$. Certain elements of this combinatorial model (the degeneracies of the top dimensional cube) are indexed by \emph{$\domdim$-ordered subsets of $[p]$} (\Cref{def:d_ordered_subset}, \Cref{lem:degeneracy_ordered_subsets}), denoted by $\bI = (I^{(1)}, \ldots, I^{(\domdim)})$, and consists of subsets $I^{(j)} \subset [p]$ for $j \in [d]$. These $\domdim$-ordered subsets are then associated to canonical \emph{evaluation maps with respect to $\bI$}, denoted by $\eta_\bI: \square^p \rightarrow \square^\domdim$, and defined by
\begin{align}
    \eta_\bI(s_1, \ldots, s_p) = \left( \max_{i \in I^{(1)}}\{s_i\}, \ldots, \max_{i \in I^{(\domdim)}}\{s_i\}\right).
\end{align}

Note that the unit $p$-cube, $\square^p$, can be decomposed into permuted simplices (\Cref{def:permuted_simplex}) as
\begin{align}
    \square^p = \coprod_{\pi \in \Sigma_p} \Delta^p_\pi.
\end{align}
Restricted to each permuted $p$-subsimplex, $\Delta^p_\pi$, the evaluation map $\eta_\bI$ with respect to $\bI$ is a fixed projection map due to the fixed ordering of the parameters in $\Delta^p_\pi$. 

For each $p \geq \domdim$, these evaluation maps for degenerate $p$-cubes allows us to stitch together differential forms on $X$ with total degree $q$ into a differential form on $\smmapman$ of degree $q-p$. The construction for obtaining $0$-forms on the mapping space is given by the following composition and is summarized in three steps below.
\begin{alignat}{3}
\label{eq:restricted_cubical_chen_construction}
    &\left(\Omega^\bullet(X)^{\otimes \#\ordsubset{\domdim}{p}}\right)_p &&\xrightarrow{\ev_{\domdim, p}^*} \Omega^\bullet(\square^p \times \smmapman) &&\xrightarrow{\int_{\square^p}} \Omega^{0}(\smmapman) \\
    &\hspace{32pt}\text{\small (\ref{enum:one})} && \hspace{10pt}\text{\small (\ref{enum:two})} && \hspace{8pt}\text{\small (\ref{enum:three})} \nonumber
\end{alignat}
Here $\ordsubset{\domdim}{p}$ denotes the set of $\domdim$-ordered subsets of $[p]$, $\#\ordsubset{\domdim}{p}$ denotes its cardinality and the subscript $p$ indicates the total degree of the tensor product of differential forms.

\begin{enumerate}
    \item \label{enum:one} For each $\bI$ in $\ordsubset{\domdim}{p}$, select a differential form $\omega_\bI \in \Omega^{q_\bI}(X)$ and set $\bomega = \bigotimes_{\bI} \omega_\bI \in \left(\Omega^\bullet(X)^{\otimes \#\ordsubset{\domdim}{p}}\right)_p$, where $\sum_{\bI} q_\bI = p$. This tensor product of forms contains two types of information --- a choice of nontrivial forms with total degree $p$, and a choice of $\domdim$-ordered subsets of $[p]$.
    
    \item \label{enum:two} Define the \emph{evaluation map}
    \begin{align}
    \label{eq:evaluation_dp_def}
        \ev_{\domdim,p}: \square^p \times \smmapman &\rightarrow X^{\#\ordsubset{\domdim}{p}} \\
        (\bs, \bx) &\mapsto \left(\bx \circ \eta_{\bI}(\bs)\right)_{\bI \in \ordsubset{\domdim}{p}},\nonumber
    \end{align}
    and compute the pullback of $\bomega$ along this map to get
    \begin{align*}
        \ev_{\domdim, p}^*\bomega(\bs, \bx) = \bigwedge_{\bI \in \ordsubset{\domdim}{p}} (\bx \circ \eta_\bI)^*\omega_\bI(\bs).
    \end{align*}
    
    \item \label{enum:three} Integrate the resulting differential form along $\square^p$ to obtain the following $0$-form on $\smmapman$,
    \begin{align}
    \label{eq:mapping_space_0_cochain}
        \int_{\square^p}\bigwedge_{\bI \in \ordsubset{\domdim}{p}} (\bx \circ \eta_\bI)^*\omega_\bI(\bs).
    \end{align}
\end{enumerate}

An explicit example of this construction is given in~\Cref{ex:0_cochain}. Our definition of the mapping space signature for the finite dimensional vector space $X = V \cong \R^\coddim$, where we use $\bv = (v_1, \ldots, v_n)$ for the coordinates of $V$, is obtained via this construction by specific choices of $\bomega \in \left(\Omega^\bullet(X)^{\otimes |\ordsubset{\domdim}{p}|}\right)_p$. In particular, we make the following restrictions.
\begin{enumerate}
    \item All nontrivial forms in $\bomega$ are standard $\domdim$-forms in $\R^\coddim$ so that the total degree is a multiple of $\domdim$; in particular, nontrivial forms are given by 
    \begin{align*}
        \omega_\bI = dv_{P(1)} \swedge \ldots \swedge dv_{P(\domdim)},
    \end{align*}
    where $P: [\domdim] \rightarrow [\coddim]$ is an order-preserving injection. Furthermore, this implies that the total degree of the forms is $p = m\domdim$, where $m \in \N$. Thus, given a map $\bx \in \smmapman$, the pullback in~\Cref{eq:mapping_space_0_cochain} is computed to be the Jacobian minor of $\bx$ with respect to $P$ (\Cref{def:jacobian_minor}), $\bx^*\omega_\bI(\bs) = J[\bx_P](\bs)$.
    \item These nontrivial forms are associated to specific $\domdim$-ordered subsets $\bI \in \ordsubset{\domdim}{p}$ such that the support of the pullback along $\ev_{d,p}$ is restricted to a product of permuted simplices. Explicit formulas for the $\bI$ is given in~\Cref{eq:restricted_d_ord_subsets}.
\end{enumerate}

These restrictions result in $0$-forms given by an iterated integral of Jacobian minors,
\begin{align}
\label{eq:chen_0_form_restricted}
    \int_{\intdom{m}{\domdim}{\bpi}} \prod_{i=1}^m J[\bx_{P_i}](\bt_i) d\bt,
\end{align}
where $\bpi = (\pi_1, \ldots, \pi_\domdim) \in \Sigma_m^\domdim$, the integration domain is $\intdom{m}{\domdim}{\bpi} \coloneqq \Delta^m_{\pi_1} \times \ldots \times \Delta^m_{\pi_\domdim}$, and the parametrization $\bt$ and $\bt_i$ are given in~\Cref{def:integration_domain}.  \medskip

These $0$-forms provide the basis for the definition of the mapping space signature, which is introduced in~\Cref{sec: mapping space signature} and studied for the remainder of the paper. In this section, we have used smooth mapping spaces in order to introduce and discuss Chen's cochain construction, which is naturally defined in the smooth setting. However, the specific $0$-forms obtained in~\Cref{eq:chen_0_form_restricted} can be easily defined in the setting of Lipschitz maps. Thus, we begin by introducing Lipschitz mapping spaces and corresponding metrics in the next section, and focus on this lower regularity setting for the remainder of the paper.

\section{Lipschitz Mapping Spaces and Jacobian Equivalence}\label{sec:lipschitz}
In this section, we introduce \emph{Lipschitz mapping spaces}, which will be the domain of the mapping space signature $\Phi$ defined in the next section. Among the desirable properties of a mapping space signature $\Phi$ that we have mentioned in the introduction are
\begin{enumerate}
\item continuity of the map $\bx \mapsto \Phi(\bx)$, and 
\item invariance properties.
\end{enumerate}
To address the first item we need to specify a topology on the domain and codomain of $\Phi$.
The codomain is a Hilbert space which induces a natural topology, but the topology on the domain $\lipmap \subset C(\square^\domdim,V)$ of $\Phi$ needs some care and is the topic of this section. 
Furthermore, we address the second item by studying a certain invariance that generalize the translation invariance of the classical $d=1$ case of paths in which
\begin{align}
 \Phi(T_\bv \bx) = \Phi(\bx),
\end{align}
where $T_\bv$ denotes translation of path by a vector $\bv \in \R^\coddim$, $T_\bv (\bx(t))_{t \in \square^1} = (\bv+\bx(t))_{t \in \square^1}$.
In this section, we introduce a useful generalization to $\domdim\ge 1$ of this invariance that we call \emph{Jacobian equivalence}.
The idea is as follows: in dimension $\domdim=1$ the (infinitesimal) increments of the path $x$ and $T_\bv \bx$ are the same, $d\bx =dT_\bv\bx$, and in dimension $\domdim \ge 2$ it is thus natural to ask for equivalence between mappings if they have the same (infinitesimal) ``volume increments'' which is exactly the definition of Jacobian equivalence.

\subsection{Lipschitz Mappings}
As we have stated in the previous section, Lipschitz maps are a lower regularity alternative to smooth maps in which the integrals of the desired $0$-forms (\Cref{eq:chen_0_form_restricted}) can easily be interpreted.
\begin{definition}
  Let $V$ be an inner product space. The \emph{Lipschitz mapping space} is
  \begin{align}
  \label{eq:lipschitz_mapping_def}
    \lipmap \coloneqq \left\{\bx \in C(\square^\domdim, V): \sup_{\bs,\bt \in \square^\domdim,\bs \neq \bt}\frac{\|\bx(\bt)-\bx(\bs)\|}{\|\bt-\bs\|}<\infty\right\}.
  \end{align}
\end{definition}
While the Lipschitz constant provides a natural semi-norm structure on $\lipmap$, the resulting topology is too coarse for our purposes. Below, we will introduce a metric structure on equivalence classes of Lipschitz maps, leading to a topology that is compatible with the mapping space signature. Before doing so, we introduce various formulations of the Jacobian minor operator extracts (projected) volume increments from Lipschitz maps.

\subsection{The Jacobian Minor Operator}
Given $\bx \in \lipmap$, we can consider the $V$-valued $1$-form $d \bx$.
By Rademacher's theorem, Lipschitz functions are differentiable almost
everywhere on the interior of $\square^{\domdim}$, and since the derivatives are bounded if they exist, we identify the differential as an $L^\infty$ function,
\begin{align}
\label{eq:differential_def}
  d\bx \in L^\infty( \square^{\domdim}, L(\R^\domdim, V)).
\end{align}
We introduce the \emph{Jacobian minor operator} which plays an important role throughout this article. We use $\Lambda^\domdim V \label{eq:exterior_algebra}$ to denote the degree $\domdim$ part of the exterior algebra of $V$.
\begin{definition}
  We call
  \begin{align}
    \label{eq:jacobian_minor_operator_d}
    \hat{d} : \lipmap &\rightarrow L^\infty \left( \square^\domdim, L\left(\Lambda^\domdim \R^\domdim, \Lambda^\domdim V\right)\right)\\
    \bx(\bs) & \mapsto \wedge^d d\bx(\bs), \nonumber
  \end{align}
  the \emph{Jacobian minor operator}.
\end{definition}
The Jacobian minor operator can be seen as a $\Lambda^\domdim V$-valued $\domdim$-form on $\square^\domdim$.
In order to simplify notation, we also make the identification $L(\Lambda^\domdim \R^\domdim, \Lambda^\domdim V) \cong \Lambda^\domdim V$, and simply view the Jacobian minor operator as a map\footnote{Explicitly, at each $\bs$ in $\square^d$, the map $\hat{d}\bx(\bs):\Lambda^\domdim\R^\domdim \to \Lambda^\domdim V$ sends the volume form $\alpha_1 \swedge \cdots \swedge \alpha_\domdim$ of $\R^\domdim$ to the $\domdim$-form
		$d\bx(\bs)(\alpha_1) \swedge \cdots \swedge d\bx(\bs)(\alpha_\domdim)$ on $V$.}
\begin{align*}
    \hat{d}: \lipmap \rightarrow L^\infty \left( \square^\domdim,  \Lambda^\domdim V\right).
\end{align*}
The operator $\hat d$ does not depend on a choice of basis of $V$ and throughout this article we state our main definitions and results in a basis independent way.
However, for some concrete computations it is useful to work in coordinates with respect to a basis of $V$ and in the next two sections we introduce the Jacobian minor $J$ which is simply a basis dependent version of the same map $\hat d$.

\subsection{The Projected Jacobian Minor Operator}
Let $\{e_1, \ldots, e_\coddim\}$ be a basis of $V$.
Then we can index a basis for $\Lambda^\domdim V$ using the collection of increasing sequences \[1 \leq P(1) < \ldots < P(\domdim) \leq \coddim,\] which we view as an order-preserving injection $P:[\domdim] \rightarrow [\coddim]$.
\begin{definition}
  Given a basis $\{e_1, \ldots, e_\coddim\}$ of $V$ we define 
\begin{align}
\label{eq:order_preserving_injections}
    \cO_{\domdim,\coddim} \coloneqq \{ P: [\domdim] \rightarrow [\coddim] \, : \, P \text{ is an order-preserving injection} \}.
\end{align}
In particular, given $P \in \cO_{\domdim,\coddim}$, denote the corresponding basis vector in $\Lambda^\domdim  V$ as
\begin{align}
    e^P \coloneqq e_{P(1)} \swedge \ldots \swedge e_{P(\domdim)}.
\end{align}
\end{definition}
\begin{definition}
  Given a basis $\{e_1, \ldots, e_\coddim\}$ of $V$, and a map $\bx \in \lipmap$, we use the coordinate-wise notation $\bx= (x_1, \ldots, x_\coddim)$.
For $P \in \cO_{\domdim,\coddim}$, we define
\begin{align*}
    \bx_P \coloneqq (x_{P(1)}, \ldots, x_{P(\domdim)}) : \square^{\domdim} \rightarrow \R^\domdim
\end{align*}
to be the projection onto the coordinates specified by $P$.
We denote the $e^P$-component of $\hat{d}\bx$ as 
\begin{align}
\label{eq:projected_jac_minor_d_def}
    \hat{d}^P \bx = dx_{P(1)} \swedge \ldots \swedge dx_{P(\domdim)}.
\end{align}
\end{definition}

\subsection{The Projected Jacobian Minor}
In the setting of Lipschitz maps, we explicitly formulate the Jacobian minor operator using determinants of the Jacobian matrix.

\begin{definition}
\label{def:jacobian_minor}
  Let $\bx \in \lipmap$ and $P \in \cO_{\domdim,\coddim}$. The \emph{Jacobian minor of $\bx$ with respect to $P$} is defined by
\begin{align}
\label{eq:jacobian_minor_def}
	J[\bx_P](\bs) \coloneqq
	\begin{vmatrix}
	\frac{\partial x_{P(1)}}{\partial s_1}(\bs) & \cdots & \frac{\partial x_{P(1)}}{\partial s_{\domdim}}(\bs) \\
	\vdots & \ddots & \vdots \\
	\frac{\partial x_{P(1)}}{\partial s_1}(\bs)   & \cdots & \frac{\partial x_{P(\domdim)}}{\partial s_{\domdim}}(\bs)
	\end{vmatrix}.
\end{align}
The $e^P$-component of $\hat{d}\bx$ is given by the Jacobian minor,
\begin{align}
    \hat{d}^P(\bx)(\bs) = J[\bx_P](\bs) \hat{d}\bs,
\end{align}
where $\hat{d}\bs \coloneqq ds_1 \swedge \ldots \swedge ds_{\domdim}$ and thus $\hat{d}^P(\bx)$ is the collection of Jacobian minors of $\bx$ with respect to the chosen basis on $V$. Thus, the Lipschitz formulation of the Jacobian minor operator is
\begin{align}
\label{eq:jacobian_minor_operator_J}
    J: \lipmap \rightarrow L^\infty(\square^{\domdim}, \Lambda^\domdim  V) \\
    \bx(\bs) \mapsto \left(J[\bx_P](\bs)\right)_{P \in \cO_{\domdim,\coddim}},\nonumber
\end{align}
where
\begin{align}
\label{eq:lipschitz_jacobian_minor}
    \hat{d}x(\bs) = J[x(\bs)] \hat{d}\bs.
\end{align}
\end{definition}
\begin{remark}
  In the Lipschitz setting, the two formulations of the Jacobian minor operator, $\hat{d}$ given in~\Cref{eq:jacobian_minor_operator_d} and $J$ given in~\Cref{eq:jacobian_minor_operator_J} are simply basis-independent and basis-dependent versions of the same map. However, the basis-independent formulation $\hat{d}$ suggests possible generalizations to lower regularity maps using generalizations of the Young integral to differential forms~\cite{zust_integration_2011, stepanov_towards_2020}. Thus, we will state the main definitions using the $\hat{d}$ operator, but we will primarily work with the basis-dependent formulation $J$, which allows us to perform explicit computations for Lipschitz maps.
\end{remark}

\subsection{Equivalence Classes of Lipschitz Maps}
The Jacobian minor operator can then be used to define an equivalence class of functions in $\lipmap$.
\begin{definition}
\label{def:jacobian_equivalence}
    Given two maps $\bx, \by \in \Lip(\square^{\domdim}, V)$, we say that $\bx$ and $\by$ are \emph{Jacobian equivalent}, denoted $\bx \sim_\Jac \by$, if $\hat{d}(\bx) = \hat{d}(\by)$ as elements of $L^\infty(\square^{\domdim}, V)$. We denote Jacobian equivalence classes of Lipschitz functions by
    \begin{align}
    \label{eq:Jlipmap_def}
        \Jlipmap \coloneqq \Lip(\square^{\domdim}, V)/\sim_\Jac.
    \end{align}
\end{definition}
By definition, the Jacobian minor operator is well-defined on Jacobian equivalence classes, and will be viewed as an injective map
\begin{align*}
    \hat{d}: \Jlipmap \rightarrow L^\infty(\square^{\domdim}, \Lambda^\domdim  V).
\end{align*}
\begin{example}[$d=1$]
  For a path $\bx \in \lippath$, the Jacobian minor operator reduces to the differential $\hat{d}(\bx) = d\bx$.
  Thus, in this case Jacobian equivalence of Lipschitz paths corresponds to translation equivalence. 
\end{example}

\begin{example}[$d \ge 2$]
\label{ex:jacobian_equivalence}
For higher-dimensional domains, Jacobian equivalence is much more complex than translation equivalence.
For example, there is a simple 1-parameter family of maps which are all Jacobian equivalent but not related by translations. Consider any nonzero $a \in \R$, and let $\bx^a: \square^2 \rightarrow \R^3$ be given by
	\begin{align*}
		\bx^a(s_1, s_2) = \left( as_1, \, \frac{1}{a} s_2,\,  -as_1 + \frac{1}{a}s_2 \right).
	\end{align*}
	The Jacobian of this map is
	\begin{align*}
		d\bx^a(\bs) = \begin{pmatrix} a & 0 \\
	0 & \frac{1}{a} \\
	-a & \frac{1}{a} \end{pmatrix}.
	\end{align*}
	Then, the determinants of the three Jacobian minors are
	\begin{align*}
		J[\bx^a_{1,2}](\bs) = J[\bx^a_{2,3}](\bs) = J[\bx^a_{1,3}](\bs) = 1
	\end{align*}
	for all $\bs \in \square^2$. Thus, $\bx^a \sim_\Jac \bx^b$ for any $a, b \in \R- \{0\}$.
\end{example}

\subsection{Metrics}
We use Jacobian minors to define higher dimensional notions of the $1$-variation and Lipschitz metric of a path.
\begin{definition}
\label{def:jacobian_metrics}
Let $\bx, \by \in \Jlipmap$.
We define the \emph{$1$-Jacobian variation metric} and \emph{Jacobian Lipschitz metric} to be
\begin{align}
  \met_1(\bx, \by)  \coloneqq \|\hat{d}\bx - \hat{d}\by\|_1  \text{ and }
    \met_\infty(\bx, \by)  \coloneqq \|\hat{d}\bx - \hat{d}\by \|_\infty
\end{align}
\end{definition}
Both of these are metrics since $\|\cdot\|_1$ and $\|\cdot\|_\infty$ are norms on $\Linfmap$.
When $d = 1$, the $1$-Jacobian variation and Jacobian Lipschitz metrics reduce to the classical $1$-variation and Lipschitz metrics for paths. Furthermore, they also arise as norms on $\Jlippath$ since the differential is linear. In particular, given $\bx, \by \in \Jlippath$, we have 
    \begin{align*}
        \met_1(\bx, \by) = \|d\bx - d\by\|_1 \quad \text{and}\quad \met_\infty(\bx, \by) = \|d\bx - d\by\|_{\infty}.
    \end{align*}
    However, these equalities do not hold in general for $d > 1$ because in this case the Jacobian minor operator $\hat{d}: \Jlipmap \rightarrow L^\infty(\square^\domdim, \Lambda^\domdim V)$ is not linear.

\begin{remark} The metrics of Definition \ref{def:jacobian_metrics} generalize the $1$-variation and Lipschitz metrics for paths by using \emph{volume} increments rather than \emph{length} increments. Given a map $\bx \in \Jlipmap$, its Jacobian minors $\hat{d}\bx(\bs)$ at a point $\bs \in \square^\domdim$ correspond to the components of the infinitesimal volume increment of $\bx$ at $\bs$ (where components refer to projections onto $\domdim$-planes). By the Binet-Cauchy formula \cite[Chapter 1.2.4]{mattheo}, the pointwise Euclidean norm $|\hat{d}\bx(\bs)|$ is the unsigned infinitesimal volume increment of $\bx$ at $\bs$. Thus $\|\hat{d}\bx\|_\infty$ is the maximal infinitesimal volume increment of $\bx$. Furthermore, the quantity $\|\hat{d}\bx\|_1$ equals the {\em Hausdorff area} of $\bx$ (see \cite[Theorem 3.2.3]{federer1969geometric} for details). Given another $\by \in \Jlipmap$, the metrics $\mu_\bullet$ are the $L^1$ and $L^\infty$ norms of $\hat{d}\bx - \hat{d}\by$, which in turn compute the component-wise (projected) differences in the unsigned volume increments between $\bx$ and $\by$.
\end{remark}

In the following section, we introduce the mapping space signature of a map $\bx \in \lipmap$. This signature will be defined purely in terms of the Jacobian minor $\hat{d}\bx$, and as such, the $\mu_\bullet$ metrics defined above will play an important role when establishing its continuity.

\section{The Mapping Space Signature}\label{sec: mapping space signature}
In this section, we introduce the mapping space signature
\begin{align}
  \Phi(\bx) = \left( 1, \int_{\intdom{m}{\domdim}{\bpi}} \hat d x(\bt_1), \ldots, \int_{\intdom{m}{\domdim}{\bpi}} \hat d x(\bt_1) \otimes \ldots \otimes \hat{d} x (\bt_m), \ldots\right)_{m \geq 1, \bpi \in \Sigma_m^\domdim}
\end{align}
which is derived from certain $0$-forms (\Cref{eq:chen_0_form_restricted}) in Chen's mapping space construction described in~\Cref{sec:chen_construction}.

 \subsection{Integration Domain}
 The mapping space signature is graded and each component is determined by multi-iterated integrals, where the domain of integration ${\intdom{m}{\domdim}{\bpi}}$ is a product of $m$-simplices.
 However, unlike the standard construction of the path-case, the higher dimensional setting uses permutations within each $m$-simplex which we now introduce. 

 \begin{definition}
 \label{def:permuted_simplex}
   Let $m \in \N$, $a, b \in [0,1]$ such that $a < b$, and $\pi \in \Sigma_m$.
   The \emph{permuted $m$-simplex on $(a,b)$} is
   \begin{align}
   \label{eq:permuted_m_simplex}
     \Delta^m_\pi(a,b) \coloneqq \{ a \leq t_{\pi(1)} < \ldots < t_{\pi(m)} \leq b\}.
   \end{align}
   If $(a,b) = (0,1)$, then we will simply define $\Delta^m_\pi \coloneqq \Delta^m_\pi(0,1)$, and if $\pi = \id \in \Sigma_m$ is the identity permutation, we omit the permutation and simply write $\Delta^m(a,b) \coloneqq \Delta^m_\id(a,b)$ or $\Delta^m \coloneqq \Delta^m_\id(0,1)$.
 \end{definition}
 \begin{definition}\label{def:subcube}
Suppose $\ba = (a_1, \ldots, a_{\domdim}), \bb = (b_1, \ldots, b_{\domdim}) \in \square^{\domdim}$ such that $a_j < b_j$ for all $j \in [\domdim]$. In other words, $(\ba, \bb) \in (\Delta^2)^\domdim$, which denotes a subcube of $\square^{\domdim}$, defined by
\begin{align}
    \square^{\domdim}(\ba, \bb) \coloneqq \prod_{j=1}^{\domdim} [a_j, b_j].
\end{align}
\end{definition}

\begin{definition}
\label{def:integration_domain}
Let $\bpi = (\pi_1, \ldots, \pi_{\domdim}) \in \Sigma^{\domdim}_m$ and $(\ba, \bb) \in (\Delta^2)^\domdim$.
Let
\begin{align}
\label{eq:Dmd_domain}
    \intdom{m}{\domdim}{\bpi}(\ba, \bb) &= \Delta^m_{\pi_1}(a_1, b_1) \times \ldots \times \Delta^m_{\pi_{\domdim}}(a_{\domdim}, b_{\domdim})\\
                                 &=\{\bt = (t_{i,j})_{i \in [m], j \in [\domdim]}:  a_j \leq t_{\pi_j(1), j} < t_{\pi_j(2), j} < \ldots < t_{\pi_j(m), j} \leq b_j,  \,\forall j \in [\domdim] \}. \nonumber
\end{align}
Following the convention from the permuted $m$-simplex, we let \[\intdom{m}{\domdim}{\bpi} \coloneqq \intdom{m}{\domdim}{\bpi}(\bzero, \bone)\] and $\intdom{m}{\domdim}{} \coloneqq \intdom{m}{\domdim}{\bid}(\bzero, \bone)$, where $\bid \in \Sigma_m^{\domdim}$ is the identity. We denote 
\begin{align}
\bt_i \coloneqq (t_{i,1}, t_{i,2}, \ldots, t_{i,\domdim}),
\end{align}
which has one element from each of the simplices in $\intdom{m}{\domdim}{\bpi}(\ba, \bb)$. Note that we reserve the indices $i$ and $j$ to index elements in $[m]$ and $[\domdim]$ respectively.
\end{definition}

\subsection{The Mapping Space Signature}
The mapping space signature consists of all of the $0$-forms in Chen's mapping space construction in the form given in~\Cref{eq:chen_0_form_restricted}. The codomain of this signature is a vector space which encapsulates all of this information, extending the classical power series tensor algebra for the path signature.
\begin{definition}
    Let $V$ be a vector space and $d \in \N$. For $m \in \N$, define the \emph{level $m$ permutation tensor space} to be the vector spaces
    \begin{align}
    \label{eq:degm_perm_tens_space_def}
        \pvspace{d}{(0)}{V} \coloneqq \R, \quad \text{and} \quad \pvspace{d}{(m)}{V} \coloneqq \prod_{\bpi \in \Sigma_m^{\domdim}} \left( \Lambda^\domdim V \right)^{\otimes m}
    \end{align}
    for $m > 0$. Given a basis $(e_i)_{i=1}^\coddim$ of $V$, the set $(e^P)_{P \in \cO_{\domdim,\coddim}}$ is a basis for $\Lambda^\domdim V$. Thus, the set
    \begin{align}
        \left(e^{\cP, \bpi}\right)_{(\cP, \bpi) \in \cO_{\domdim,\coddim}^m \times \Sigma_m^\domdim}
    \end{align}
    is a basis for $\pvspace{d}{(m)}{V}$. If $V$ is a Hilbert space with orthonormal basis $(e_i)_{i=1}^\coddim$, this provides an orthonormal basis of $\pvspace{d}{(m)}{V}$. Thus, we say that $(\cP, \bpi) \in \cO_{\domdim,\coddim}^m \times \Sigma_m^\domdim$ is a \emph{level $m$ index}. In particular, we call $\cP\label{eq:forms_index_def}$ the \emph{forms index} and $\bpi\label{eq:permutation_index_def}$ the \emph{permutation index}.
    Finally, define the \emph{power series permutation tensor space} to be the graded vector space
    \begin{align}
    \label{eq:ps_perm_tens_space_def}
        \pspvspace{d}{}{V} \coloneqq \prod_{m \geq 0} \pvspace{d}{(m)}{V}.
    \end{align}
\end{definition}

If $\br_m \in \pvspace{\domdim}{(m)}{V}$, then we will use $r^{\cP, \bpi}_m$ to denote the $e^{\cP, \bpi}$ component and emphasize that the component is degree $m$. Furthermore, we use $\br^\bpi_m \in \Lambda^\domdim V $ to denote the element in the $\bpi$ coordinate of $\pvspace{\domdim}{(m)}{V}$, and use $\br_m \in \pvspace{\domdim}{(m)}{V}$ to denote the full degree $m$ component. Furthermore, we note that the dimension of $\pvspace{\domdim}{(m)}{V}$ is
\begin{align}
    \dim(\pvspace{\domdim}{(m)}{V}) =  \binom{\coddim}{\domdim}^m (m!)^{\domdim}.
\end{align}

\begin{definition}
  The \emph{mapping space signature of} $\bx \in \lipmap$ is defined as
  \begin{align}
  \label{eq:mapping_signature_def}
    \Phi(\bx) \coloneqq \left(\Phi_m(\bx)\right)_{m=0}^\infty  \in \pspvspace{d}{}{V},
  \end{align}
  where by convention $\Phi_0(\bx) \coloneqq 1 \in \R$ and 
  \begin{align}
    \Phi_m(\bx) \coloneqq \left(\Phi^\bpi_m(\bx)\right)_{\bpi \in \Sigma_m^{\domdim}} \in \pvspace{d}{(m)}{V},
  \end{align}
with 
\begin{align}
  \Phi^\bpi_m(\bx) \coloneqq \int_{\intdom{m}{\domdim}{\bpi}} \hat{d}\bx(\bt_1) \otimes \cdots \otimes \hat{d}\bx(\bt_m) \in \left(\Lambda^\domdim V\right)^{\otimes m}
\end{align}
We refer to $\Phi_m(\bx)$ as \emph{level $m$ mapping space signature of $x$} and to $\Phi_m^\bpi(\bx)$ as the \emph{level $m$ mapping space signature of $x$ with respect to $\bpi$}. 

   \end{definition}
   
Because the mapping space signature is defined strictly in terms of the Jacobian minor operator $\hat{d}$, it is invariant under Jacobian equivalence (\Cref{def:jacobian_equivalence}), and is well defined as a map
\begin{align}
    \Phi: \Jlipmap \rightarrow \pspvspace{d}{}{V}.
\end{align}

\subsection{Mapping Space Signature Coordinates}
   As mentioned before, it can be beneficial to work in coordinates.
   Therefore we fix an orthonormal basis of $V$ which allows us to use a collection $\cP = (P_1, \ldots, P_m) \in \cO_{\domdim,\coddim}^m$ of order preserving injections to index an orthonormal basis of $\left(\Lambda^\domdim  V\right)^{\otimes m}$.
\begin{definition}
\label{def:mapping_space_monomial}
Let $m \in \N$. The \emph{level $m$ mapping space monomial of $\bx\in\lipmap$ with respect to $(\cP, \bpi) \in \cO_{\domdim,\coddim}^m \times \Sigma_m^{\domdim}$} is defined to be
    \begin{align}
      \label{eq:original_mapping_space_monomial}
      \Phi^{\cP, \bpi}_m(\bx) &\coloneqq \int_{\intdom{m}{\domdim}{\bpi}} \hat{d}^{P_1}\bx(\bt_1) \swedge \cdots \swedge \hat{d}^{P_m}\bx(\bt_m).
    \end{align}
\end{definition}

Note that in the setting of a Lipschitz map $\bx \in \lipmap$, we can use~\Cref{eq:lipschitz_jacobian_minor} to rewrite the mapping space monomial with respect to $(\cP, \bpi) \in \cO_{\domdim,\coddim}^m \times \Sigma_m^{\domdim}$ as
\begin{align}
    \Phi^{\cP, \bpi}_m(\bx) = \int_{\intdom{m}{\domdim}{\bpi}} \prod_{i=1}^m J[\bx_{P_i}](\bt_i) d\bt,
\end{align}
recovering the $0$-form from Chen's construction in~\Cref{eq:chen_0_form_restricted}. Following the arguments of the full mapping space signature above, the mapping space monomials is well defined on Jacobian equivalence classes of Lipschitz maps,
\begin{align}
    \Phi_m^{\cP, \bpi}: \Jlipmap \rightarrow \R.
\end{align}

\begin{example}
Let $\bx:\square^2 \to \R^4$ be the algebraic map
\begin{align*}
\bx(s_1,s_2) := \left(s_1, ~ s_1s_2^2,~ s_1^2s_2^3,~s_2^4\right),
\end{align*}
whose Jacobian with respect to the standard basis of $\R^4$ is
\[
\renewcommand*{\arraystretch}{1.2}
d\bx(\bs) = 	\begin{pmatrix} 
				1 & 0 \\
				s_2^2 & 2s_1s_2 \\
				2s_1s_2^3 & 3s_1^2s_2^2 \\
				0 & 4s_2^3 
		        \end{pmatrix}.
\]
At level $m=3$, consider $\cP \in \cO_{2,4}^3$ given by the strictly increasing sequences $P_i:[2] \to [4]$ with images
\[
P_1\set{1,2} = \set{1,3}, \quad P_2\set{1,2} = \set{1,4}, \quad P_3\set{1,2} = \set{2,3};
\]
the three Jacobian minors implicated by the $P_i$ are 
\[
J[\bx_{P_1}](s_1, s_2) = 3s_1^2s_2^2, \quad J[\bx_{P_2}](s_1, s_2) = 4s_2^3, \quad J[\bx_{P_3}](s_1, s_2) = -s_1^2s_2^4.
\]
Let $\bpi = (\pi_1,\pi_2) \in \Sigma_3^2$ be the permutations:
\[
\pi_1 = (1~2~3), \quad \pi_2 = (3~1~2).
\]
We integrate over $\Delta^3_{\pi_1} \times \Delta^3_{\pi_2}$ consisting of points $(t_{1,1},t_{1,2},t_{1,3}) \times (t_{2,1},t_{2,2},t_{2,3})$ whose components satisfy the inequalities
\[
0 \leq t_{1,1} \leq t_{1,2} \leq t_{1,3} \leq 1 \quad \text{and} \quad 0 \leq t_{2,3} \leq t_{2,1} \leq t_{2,2} \leq 1.
\]
Thus, the monomial $\Phi_m^{\cP,\bpi}(\bx)$ evaluates to
\[
\underbrace{\int_0^1\tight\int_0^{t_{3,1}}\tight\int_0^{t_{2,1}}}_{\Delta^3_{\pi_1}}\underbrace{\int_0^1\tight\int_0^{t_{2,2}}\tight\int_0^{t_{1,2}}}_{\Delta^3_{\pi_2}}  3t_{1,1}^2t_{1,2}^2 \cdot 4t_{2,2}^3 \cdot (-t_{3,1}^2t_{3,2}^4)\, dt_{3,2}\, dt_{1,2}\, dt_{2,2}\, dt_{1,1}\, dt_{2,1}\, dt_{3,1}. 
\]
Monomials corresponding to other choices of $\cP$ and $\bpi$ may be computed analogously.
\end{example}

\subsection{A Hilbert Space Codomain}    
As in the $d=1$ case, the codomain of the mapping space signature has more structure when the underlying vector space $V$ is a Hilbert space.
In particular, we now show that $\Phi$ takes values in a graded Hilbert space.
\begin{definition}
    Suppose $V$ is a Hilbert space. The \emph{permutation tensor space} is the graded Hilbert space
    \begin{align}
    \label{eq:perm_hilb_space_def}
        \phspace{d}{}{V} \coloneqq \bigoplus_{m=0}^\infty \pvspace{d}{(m)}{V}.
    \end{align}
    Furthermore, we define the \emph{power series permutation tensor Hilbert space} to be the graded Hilbert space
    \begin{align}
    \label{eq:ps_perm_hilb_space_def}
        \psphspace{d}{}{V} \coloneqq \left\{ (\br_m)_{m=0}^\infty \in \pspvspace{d}{}{V} \, : \, \sum_{m=0}^\infty \|\br_m\|^2 < \infty \right\},
    \end{align}
    where $\br_m \in \pvspace{d}{(m)}{V}$ and the grading of the Hilbert space is given by $m$. 
\end{definition}

Now, we show that the mapping space signature takes values in $\psphspace{d}{}{V}$.

\begin{proposition}
\label{prop:factorial_decay}
    For any $\bx \in \Lip(\square^{\domdim}, V)$, we have $\|\Phi(\bx)\| < \infty$ and thus $\Phi(\bx) \in \psphspace{\domdim}{}{V}$. 
\end{proposition}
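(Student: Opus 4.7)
The plan is to obtain the factorial-type decay $\|\Phi_m(\bx)\| \lesssim L^m / (m!)^{d/2}$ for a suitable constant $L = L(\bx)$, and then sum over $m$. The key observation is that the integration domain $\intdom{m}{\domdim}{\bpi} = \Delta^m_{\pi_1} \times \cdots \times \Delta^m_{\pi_\domdim}$ is a product of $\domdim$ permuted $m$-simplices, each of Lebesgue volume $1/m!$, so
\[
\operatorname{vol}\bigl(\intdom{m}{\domdim}{\bpi}\bigr) = \frac{1}{(m!)^\domdim}
\]
independently of $\bpi$. This is the only place where the factorial gain comes from, and it will interact with the $(m!)^\domdim$-fold multiplicity of permutation indices $\bpi \in \Sigma_m^\domdim$ to give net decay of order $1/(m!)^\domdim$ at the level of $\|\Phi_m(\bx)\|^2$.

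First I would set $L := \|\hat d \bx\|_\infty$, which is finite because $\bx \in \lipmap$ implies $d\bx \in L^\infty(\square^\domdim, L(\R^\domdim, V))$ by Rademacher's theorem, and hence $\hat d \bx = \wedge^\domdim d\bx$ is also essentially bounded. Using that the Hilbert norm on $(\Lambda^\domdim V)^{\otimes m}$ is cross-multiplicative on elementary tensors, I would estimate
\[
\bigl\| \Phi^\bpi_m(\bx) \bigr\| \;\leq\; \int_{\intdom{m}{\domdim}{\bpi}} \bigl\| \hat d\bx(\bt_1) \bigr\| \cdots \bigl\| \hat d\bx(\bt_m) \bigr\|\, d\bt \;\leq\; \frac{L^m}{(m!)^\domdim}
\]
for every $\bpi \in \Sigma_m^\domdim$.

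Next I would sum over the $(m!)^\domdim$ permutation indices contributing to $\Phi_m(\bx) \in \pvspace{d}{(m)}{V}$, obtaining
\[
\| \Phi_m(\bx) \|^2 \;=\; \sum_{\bpi \in \Sigma_m^\domdim} \bigl\| \Phi^\bpi_m(\bx) \bigr\|^2 \;\leq\; (m!)^\domdim \cdot \frac{L^{2m}}{(m!)^{2\domdim}} \;=\; \frac{L^{2m}}{(m!)^\domdim}.
\]
Finally, since $\domdim \geq 1$ implies $(m!)^\domdim \geq m!$, I conclude
\[
\|\Phi(\bx)\|^2 \;=\; \sum_{m \geq 0} \|\Phi_m(\bx)\|^2 \;\leq\; \sum_{m \geq 0} \frac{L^{2m}}{(m!)^\domdim} \;\leq\; \sum_{m \geq 0} \frac{L^{2m}}{m!} \;=\; e^{L^2} \;<\; \infty,
\]
which places $\Phi(\bx)$ inside $\psphspace{\domdim}{}{V}$.

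There is no serious obstacle: the only point that requires a moment's care is confirming that the Hilbert norm on the tensor factor $(\Lambda^\domdim V)^{\otimes m}$ is the one coming from the orthonormal basis $(e^{\cP})_{\cP \in \cO_{\domdim,\coddim}^m}$ (so that the cross-norm bound on $\hat d\bx(\bt_1) \otimes \cdots \otimes \hat d\bx(\bt_m)$ applies pointwise), and that the outer $\ell^2$-sum over $\bpi$ matches the Hilbert structure used in defining $\phspace{\domdim}{}{V}$. Both are immediate from the definitions of $\pvspace{d}{(m)}{V}$ and $\psphspace{\domdim}{}{V}$ given above.
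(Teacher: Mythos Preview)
Your proof is correct and follows essentially the same approach as the paper: bound the integrand uniformly, use $\operatorname{vol}(\intdom{m}{\domdim}{\bpi}) = 1/(m!)^\domdim$, count the indices, and sum the resulting series. The only cosmetic difference is that you work coordinate-free with $L = \|\hat d\bx\|_\infty$ and bound $\|\Phi_m^\bpi(\bx)\|$ directly, whereas the paper takes $L = \|\bx\|_{\Lip}$ and bounds each scalar monomial $|\Phi_m^{\cP,\bpi}(\bx)|$ before summing over both $\cP$ and $\bpi$; this produces an extra $\binom{\coddim}{\domdim}^m$ factor in their bound that you have absorbed into your choice of $L$.
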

\begin{proof}
    Consider the level $m>0$ mapping space signature $\Phi_m^{\cP, \bpi}$, where $(\cP, \bpi) \in \cO^m_{\domdim,\coddim} \times \Sigma_m^\domdim$. Suppose $L = \|\bx\|_{\Lip}$ is the Lipschitz constant of $\bx$. Then, we have
    \begin{align*}
        \left|\Phi_m^{\cP, \bpi}(\bx)\right| &\leq \int_{\intdom{m}{\domdim}{\bpi}}\prod_{i=1}^m \big|J[\bx_{P_i}](\bt_i) \big| \leq \frac{L^{\domdim m}}{(m!)^\domdim},
    \end{align*}
    since the volume of $\intdom{m}{\domdim}{\bpi}$ is $\frac{1}{(m!)^{\domdim}}$. Now, considering the full signature, we have
    \begin{align*}
        \|\Phi(\bx)\|^2 &= 1 + \sum_{m=1}^\infty \sum_{(\cP,\bpi) \in \cO^m_{\domdim,\coddim}\times\Sigma_m^\domdim} |\Phi_m^{\cP,\bpi}(\bx)|^2 \\
        & \leq 1 + \sum_{m=1}^\infty \frac{L^{2 \domdim m}}{(m!)^{2\domdim}} \binom{\coddim}{\domdim}^m (m!)^{\domdim} \\
        & \leq \sum_{m=0}^\infty \frac{L^{2\domdim m}}{(m!)^{\domdim}}\binom{\coddim}{\domdim}^m < \infty.
    \end{align*}
\end{proof}

Thus, we view the mapping space signature as a map
\begin{align}
    \Phi: \Jlipmap \rightarrow \psphspace{d}{}{V}.
\end{align}

\subsection{Continuity}
The mapping space signature is continuous when $\Jlipmap$ is equipped with either the $1$-Jacobian variation metric or the Jacobian Lipschitz metric from~\Cref{def:jacobian_metrics}.
We provide the proofs for the Jacobian Lipschitz metric, but essentially the same arguments can be used for the $1$-Jacobian variation metric.

\begin{proposition}
\label{prop:monomial_continuity}
\label{cor:signature_continuty}
The mapping space signature
\begin{align}
  \Phi: (\Jlipmap, \met_\infty) \rightarrow \psphspace{\domdim}{}{V}
\end{align}
is continuous.
Moreover, if $\bx, \by \in \Jlipmap$ and
\begin{align*}
        L &> \max\{ \|\hat{d}\bx\|_\infty, \|\hat{d}\by\|_\infty\}\\
        \epsilon &> \met_\infty(\bx, \by),
    \end{align*}
     then for every $m\ge 0$
    \begin{align*}
        |\Phi^{\cP, \bpi}_m(\bx) - \Phi^{\cP, \bpi}_m(\by)| < \frac{mL^{m-1}}{(m!)^{\domdim}} \epsilon.
    \end{align*}
\end{proposition}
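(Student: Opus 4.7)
The plan is to reduce the claim to the pointwise estimate for $m$-fold tensor products by a standard telescoping identity, then integrate, and finally sum over indices to upgrade the monomial-level bound to a bound on the Hilbert norm of $\Phi(\bx) - \Phi(\by)$.

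First I would establish the telescoping identity in the form
\begin{align*}
  \bigotimes_{i=1}^m \hat d\bx(\bt_i) - \bigotimes_{i=1}^m \hat d\by(\bt_i)
  = \sum_{k=1}^m \Big(\bigotimes_{i<k}\hat d\bx(\bt_i)\Big) \otimes \bigl(\hat d\bx(\bt_k) - \hat d\by(\bt_k)\bigr) \otimes \Big(\bigotimes_{i>k}\hat d\by(\bt_i)\Big),
\end{align*}
and then project onto the $e^{\cP, \bpi}$ component, which replaces each $\hat d\bx(\bt_i)$ (respectively $\hat d\by(\bt_i)$) by the Jacobian minor $J[\bx_{P_i}](\bt_i)$ (respectively $J[\by_{P_i}](\bt_i)$). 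Since $|J[\bx_P](\bs)| \le \|\hat d\bx(\bs)\|$ (the Euclidean norm in $\Lambda^\domdim V$) and similarly for $\by$, the pointwise hypotheses $\|\hat d\bx\|_\infty, \|\hat d\by\|_\infty < L$ and $\|\hat d\bx - \hat d\by\|_\infty < \epsilon$ bound the absolute value of the integrand of the $k$-th term by $L^{m-1}\epsilon$.

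Integrating over $\intdom{m}{\domdim}{\bpi}$, whose volume is exactly $1/(m!)^\domdim$ (since it is the product of $\domdim$ permuted simplices, each of volume $1/m!$), gives
\begin{align*}
  |\Phi^{\cP,\bpi}_m(\bx) - \Phi^{\cP,\bpi}_m(\by)| \le \sum_{k=1}^m \frac{L^{m-1}\epsilon}{(m!)^\domdim} = \frac{m L^{m-1}}{(m!)^\domdim}\epsilon,
\end{align*}
which is exactly the stated monomial estimate. For the global continuity statement, I would fix $\bx$ and observe that $\mu_\infty(\bx, \by) < \epsilon$ implies $\|\hat d\by\|_\infty < \|\hat d\bx\|_\infty + \epsilon$, so we may choose some $L$ uniformly valid for both $\bx$ and all nearby $\by$. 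Summing the squared monomial bounds over $(\cP,\bpi) \in \cO^m_{\domdim,\coddim} \times \Sigma_m^\domdim$ (which has cardinality $\binom{\coddim}{\domdim}^m (m!)^\domdim$) and over $m \ge 1$ yields
\begin{align*}
  \|\Phi(\bx) - \Phi(\by)\|^2 \le \epsilon^2 \sum_{m=1}^\infty \frac{m^2 L^{2(m-1)}}{(m!)^\domdim}\binom{\coddim}{\domdim}^m,
\end{align*}
and the series converges by the same factorial-decay argument used in the proof of \Cref{prop:factorial_decay}; letting $\epsilon \to 0$ gives continuity.

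The only mildly delicate step is the very first one: making sure that projecting the pointwise telescoped tensor identity onto the $e^{\cP,\bpi}$-basis vector really does produce the Jacobian-minor expression and that each pointwise factor is controlled by $\|\hat d\bx\|_\infty$ or $\|\hat d\by\|_\infty$, independently of $\cP$ and $\bpi$. Once this is verified, everything else is a direct integration and summation.
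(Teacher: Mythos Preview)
Your proposal is correct and follows essentially the same route as the paper. The paper states the telescoping bound directly at the scalar level (for real numbers $a_i = J[\bx_{P_i}](\bt_i)$, $b_i = J[\by_{P_i}](\bt_i)$ with $|a_i|,|b_i|<L$ and $|a_i-b_i|<\epsilon$ one has $|\prod a_i - \prod b_i| < mL^{m-1}\epsilon$), whereas you first telescope at the tensor level and then project onto $e^{\cP,\bpi}$; after projection these are the same inequality, and the remaining integration and summation steps are identical.
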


\begin{proof}
We first show the second claim and use the fact that for $a_1, \ldots, a_m, b_1, \ldots, b_m \in \R$ such that $|a_i|, |b_i| < L$ and $|a_i - b_i| < \epsilon$ for all $i \in [m]$, then
    \begin{align}
    \label{eq:prod_inequality}
        \left|\prod_{i=1}^m a_i - \prod_{i=1}^m b_i \right| < m L^{m-1} \epsilon.
    \end{align}
    Then, we have
    \begin{align*}
        |\Phi^{\cP, \bpi}_m (\bx) - \Phi^{\cP, \bpi}_m(\by)| &\leq  \int_{\intdom{m}{\domdim}{\bpi}} \left|\prod_{i=1}^m J[\bx_{P_i}](\bt_i) - \prod_{i=1}^m J[\by_{P_i}](\bt_i) \right| d\bt \\
        & < \int_{\intdom{m}{\domdim}{\bpi}} mL^{m-1} \epsilon d\bt \\
        & = \frac{mL^{m-1}}{(m!)^{\domdim}} \epsilon
    \end{align*}
   
    We now use this estimate to show the continuity of $\Phi$.
    Let $\bx \in \Jlipmap$ and $\epsilon > 0$. Suppose $L > \|\hat{d}\bx\|_\infty$, then 
\begin{align*}
  L + \epsilon > \sup_{\by \in \Jlipmap} \{ |\by|_{J, Lip} \, : \, \met_\infty(\bx, \by) < \epsilon\}.
\end{align*}
Then, for every $\by \in \Jlipmap$ such that $\met_\infty(\bx, \by) < \epsilon$, we have
\begin{align*}
  \left\| \Phi(\bx) - \Phi(\by) \right\|^2 & = \sum_{m=1}^\infty \sum_{(\cP, \bpi)} |\Phi_m^{\cP,\bpi}(\bx) - \Phi_m^{\cP, \bpi}(\by)|^2 \\
                                          & < \sum_{m=1}^\infty \sum_{(\cP, \bpi)} \frac{m^2 (L+\epsilon)^{2(m-1)}}{(m!)^{2\domdim}} \epsilon^2 \\
                                          & = \left(\sum_{m=1}^\infty \binom{\coddim}{\domdim}^m  \frac{m^2 (L+\epsilon)^{2(m-1)}}{(m!)^{\domdim}}\right) \epsilon^2,
\end{align*}
where the sum converges for any fixed $L, \epsilon>0$. 
\end{proof}

\begin{remark}
    The metrics in~\Cref{def:jacobian_metrics} form pseudometrics on the space of Lipschitz maps $\lipmap$ and induce topologies which are finer than the topology induced by the naive generalizations of the 1-variation and Lipschitz metrics. Consider the Jacobian Lipschitz metric $\mu_\infty$. Suppose $\bx \in \lipmap$, $\epsilon > 0$, and consider the ball $B_\Lip(\bx, \epsilon)$ with respect to the ordinary Lipschitz metric. Given any $\by \in B_\Lip(\bx,\epsilon)$, we have $\|\by\|_\Lip < \|\bx\|_\Lip + \epsilon \eqqcolon L$. Then given $P \in \cO_{\domdim, \coddim}$, and using~\Cref{eq:prod_inequality}, we have
    \begin{align*}
        \left|J[\bx_P](\bs) - J[\by_P](\bs) \right|  \leq \sum_{\sigma \in \Sigma_\domdim}  \left|\prod_{j = 1}^\domdim \frac{\partial x_{P(j)}}{\partial s_{\sigma(j)}} - \frac{\partial y_{P(j)}}{\partial s_{\sigma(j)}} \right|  < (d!) \domdim L^{\domdim-1}\epsilon,
    \end{align*}
    which implies that
    \begin{align*}
        \mu_\infty(\bx, \by) < \binom{\coddim}{\domdim}^{1/2} (d!) \domdim L^{\domdim-1}\epsilon.
    \end{align*}
    This shows that the ordinary Lipschitz topology is included in the topology induced by $\mu_\infty$. However, consider the family of maps $\bx^a$ defined in~\Cref{ex:jacobian_equivalence}. In this case, $\mu_\infty(\bx^a, 0) = 1$, but $\|\bx^a\|_{\Lip} > |a|$ for any $a \in \R - \{0\}$. Thus, the topology induced by $\mu_\infty$ is strictly finer than the topology induced by $\|\cdot\|_\Lip$. 
\end{remark}

\subsection{Permutation Invariance}
\label{ssec:permutation_invariance}

One of the differences between the mapping space signature and the classical path signature is the additional terms involving integration along permuted simplices. However, we show that the level $m$ mapping space monomials are invariant under an action of the symmetric group $\Sigma_m$. By quotienting out this symmetric group action in the case of $d=1$, we recover the usual construction of the path signature, which is discussed in the following subsection.

\begin{proposition}
\label{prop:permutation_invariance}
	Let $(\cP, \bpi) \in \cO_{\domdim,\coddim}^m \times \Sigma_m^\domdim$ be a level $m$ index. Let $\sigma \in \Sigma_m$ and define the permutation actions
	\begin{align*}
	    \sigma\cP & = (P_{\sigma(1)}, \ldots, P_{\sigma(m)}) \\
		\sigma\bpi & = (\sigma \pi_1, \ldots, \sigma \pi_{\domdim}).
	\end{align*}
	Then for any $\bx \in \Jlipmap$, 
	\begin{align}
	\label{eq:permutation_invariance}
		\Phi_m^{ \sigma \cP, \bpi}(\bx) = \Phi_m^{\cP, \sigma \bpi}(\bx).
	\end{align}
\end{proposition}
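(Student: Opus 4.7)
The plan is a single change of variables in the iterated integral. Working in coordinates, the statement asserts equality of the scalar integrals
\begin{align*}
\Phi^{\sigma\cP,\bpi}_m(\bx) = \int_{\intdom{m}{\domdim}{\bpi}} \prod_{i=1}^m J[\bx_{P_{\sigma(i)}}](\bt_i)\, d\bt \quad \text{and} \quad \Phi^{\cP,\sigma\bpi}_m(\bx) = \int_{\intdom{m}{\domdim}{\sigma\bpi}} \prod_{i=1}^m J[\bx_{P_i}](\bt_i)\, d\bt,
\end{align*}
so the natural move is to relabel the outer index $i \in [m]$ on the left-hand side via $\sigma$ and check that the integrand and the domain transform in the required way.

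Concretely, I would substitute $\bu_k \coloneqq \bt_{\sigma^{-1}(k)}$, i.e.\ $u_{k,j} = t_{\sigma^{-1}(k),j}$ for all $j \in [\domdim]$. Setting $k = \sigma(i)$ rewrites the integrand as $\prod_{i=1}^m J[\bx_{P_{\sigma(i)}}](\bt_i) = \prod_{k=1}^m J[\bx_{P_k}](\bu_k)$, which is the integrand appearing in $\Phi^{\cP,\sigma\bpi}_m(\bx)$. The change of variables is a permutation of Euclidean coordinates on $\square^{m\domdim}$, so its Jacobian determinant has absolute value $1$ and $d\bt = d\bu$.

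It remains to show that the substitution maps $\intdom{m}{\domdim}{\bpi}$ bijectively onto $\intdom{m}{\domdim}{\sigma\bpi}$. By definition of the permuted simplex, $\bt \in \intdom{m}{\domdim}{\bpi}$ means that for each $j \in [\domdim]$ the chain $t_{\pi_j(1),j} < t_{\pi_j(2),j} < \cdots < t_{\pi_j(m),j}$ holds. Substituting $t_{i,j} = u_{\sigma(i),j}$ turns this chain into $u_{(\sigma\pi_j)(1),j} < u_{(\sigma\pi_j)(2),j} < \cdots < u_{(\sigma\pi_j)(m),j}$ for each $j$, which is precisely the defining condition for $\bu \in \intdom{m}{\domdim}{\sigma\bpi}$. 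Combining these observations yields~\eqref{eq:permutation_invariance}.

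There is no substantive obstacle; the only point that requires care is conceptual rather than technical, namely that the relabeling of the outer index $i$ acts uniformly across all $\domdim$ coordinates of each $\bt_i$. This is exactly why the single permutation $\sigma$ acts simultaneously on $\cP = (P_1,\ldots,P_m)$ (which is indexed by $i$) and on every component $\pi_j$ of $\bpi$ (via the same index $i$), producing the symmetry asserted in the proposition.
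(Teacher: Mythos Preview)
Your proof is correct and takes essentially the same approach as the paper: the paper performs the change of variables $t_{i,j} \mapsto t_{\sigma(i),j}$ and observes that the domain $\intdom{m}{\domdim}{\bpi}$ is carried to $\intdom{m}{\domdim}{\sigma\bpi}$, which is exactly your substitution $t_{i,j} = u_{\sigma(i),j}$ written more tersely. Your version is more explicit about why the integrand, the Jacobian of the substitution, and the domain all transform correctly, but the underlying argument is identical.
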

\begin{proof}
	By definition,
	\begin{align*}
		\Phi_{m}^{\sigma \cP, \bpi}(\bx) = \int_{\intdom{m}{\domdim}{\bpi}} \prod_{i=1}^m J[\bx_{P_{\sigma(i)}}](\bt_i) d\bt.
	\end{align*}
	We perform the change of variables
	\begin{align*}
		t_{i, j} \mapsto t_{\sigma(i),j}.
	\end{align*}
	Under this transformation, the domain of integration becomes $\intdom{m}{\domdim}{\sigma \bpi}$, and we obtain the desired result.
\end{proof}

\begin{remark}
\label{rmk:permutation_action}
The expression in~\Cref{eq:permutation_invariance} yields a general action of $\Sigma_m$ on the level $m$ permutation tensor space, $\pvspace{\domdim}{(m)}{V}$. Indeed, given $\sigma \in \Sigma_m$ and $\br_m \in \pvspace{\domdim}{(m)}{V}$, we define the action to be
\begin{align}
    \sigma \cdot \br_m^{\cP, \bpi} \coloneqq \br_m^{\sigma^{-1} \cP, \sigma\bpi}.
\end{align}
Although we can quotient out this permutation action in the codomain while retaining the same information as the full signature, we primarily work with the full signature in this paper to avoid computing explicit representatives and to simplify notation in the algebraic formulas.
\end{remark}

This result implies that, in practice, we do not need to compute all of the monomials in order to obtain the mapping space signature. We can restrict ourselves to the monomials where the permutation index $\bpi = (\pi_1, \ldots, \pi_{\domdim}) \in \Sigma_m^{\domdim}$ has one component fixed to the identity; for example $\pi_1 = \id$. Indeed, we can compute the monomial for an arbitrary $\bpi = (\pi_1, \ldots, \pi_\domdim) \in \Sigma_m^\domdim$ as
\begin{align*}
	\Phi_{m}^{\cP,\bpi}(\bx) = \Phi_{m}^{\pi_1\cP, \pi_1^{-1}\bpi}(\bx).
\end{align*}

In the case of $\domdim=1$, this implies that the permutation index can always be set to the identity, and thus all of the path space monomials can be written as integrals over the standard $n$ simplex $\Delta^n$. This explains the absence of a permutation index for the standard definition of the path signature.

\subsection{The Identity Signature and Recovering the Path Signature} \label{ssec:identity_signature}
In the case of $\domdim =1$, the integration domain only contains a single permuted simplex, and thus the permutation index of the corresponding quotiented mapping space signature can always be fixed to be the identity permutation. In fact, we can define higher dimensional analogues of the \emph{identity mapping space signature}.

\begin{definition}
    The \emph{identity mapping space signature of $\bx \in \Jlipmap$} is defined as
    \begin{align}
    \label{eq:identity_ms_sig}
        \Phi^\bid(\bx)\coloneqq \left(\Phi_m^\bid(\bx)\right) \in \prod_{m=0}^\infty \left(\Lambda^\domdim V\right)^{\otimes m} \eqqcolon \idpspvspace{d}{}{V},
    \end{align}
    where $\bid =(\id, \ldots, \id) \in \Sigma_m^\domdim$ is the vector of identity permutations.
\end{definition}

\begin{remark}
    The identity mapping space signature can be defined recursively, analogous to the case of the path signature. This is discussed in~\Cref{apx:recursive_definition}.
\end{remark}

Returning to the case of $\domdim=1$, the identity signature is equivalent to the full signature $\Phi$ quotiented out by the permutation action from~\Cref{rmk:permutation_action}. Furthermore, the $1$-dimensional Jacobian minor operator is simply the differential, and therefore
\begin{align}
    \Phi_m^{\id}(\bx) = \int_{\Delta^m}d\bx(t_1) \otimes \ldots \otimes d\bx(t_m),
\end{align}
recovering the classical path signature. In order to distinguish between the path signature in the case of $\domdim=1$ and the mapping space signature for arbitrary $\domdim \geq 1$, we set
\begin{align}
\label{eq:path_signature_def}
    S_m^P \coloneqq \Phi^{P, \id}_m: \lippath \rightarrow \R, \quad \quad S \coloneqq \Phi^{\id}: \lippath \rightarrow \idpspvspace{1}{}{V},
\end{align}
where $\Phi$ is understood to be the $d=1$ mapping space signature in these expressions and $P \in \cO^m_{1,\coddim}$ can be viewed as a multi-index of length $m$ valued in $[\coddim]$. 

The permutatations in the integration domain of the mapping space signature are necessary for the generalized shuffle product structure (\Cref{sec:ms_shuffle}) as well as the universal and characteristic properties (\Cref{sec:univ_char}). However, the composition structure (\Cref{sec:ms_composition}) and the recursive definition of the signature (\Cref{apx:recursive_definition}) only hold for the identity signature.

\section{Invariance Properties}
\label{sec:ms_invariance}

In this section, we discuss invariance properties of the mapping space signature. We begin by establishing reparametrization invariance in the case where each coordinate of $\square^{\domdim}$ is reparametrized independently. Next, we discuss a special class of maps, constructed using tree-like paths, which have trivial mapping space signature.

\subsection{Reparametrization Invariance}

In this section, we will consider reparametrization invariance, one of the fundamental properties of the path signature. We find that the mapping space signature is invariant under reparametrizations in which each coordinate of $\square^{\domdim}$ is reparametrized independently.

\begin{proposition}
\label{prop:ms_reparametrization_invariance}
	Let $\phi_j: \square^1 \rightarrow \square^1$ be Lipschitz, monotone increasing bijections for $j \in [\domdim]$ and let $\bphi(\bs) = \left(\phi_1(s_1), \ldots, \phi_\domdim(s_{\domdim})\right) : \square^{\domdim} \rightarrow \square^{\domdim}$. For any level $m$ index $(\cP, \bpi) \in \cO_{\domdim,\coddim}^m \times \Sigma_m^\domdim$, and $\bx \in \lipmap$,
	\begin{align}
		\Phi_{m}^{\cP, \bpi}(\bx \circ \bphi) = \Phi_{m}^{\cP, \bpi}(\bx).
	\end{align}
\end{proposition}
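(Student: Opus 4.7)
The strategy is a direct change of variables in the iterated integral defining $\Phi_m^{\cP,\bpi}(\bx \circ \bphi)$, exploiting the fact that $\bphi$ acts diagonally on $\square^\domdim$ (i.e.\ the $j$-th coordinate depends only on $s_j$) and is monotone in each coordinate.

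First, I would compute the Jacobian minors of $\bx \circ \bphi$ in coordinates. Since $\bphi$ is diagonal, $d\bphi(\bs)$ is the diagonal matrix with entries $\phi_j'(s_j)$, so by the chain rule
\begin{align*}
\frac{\partial (\bx \circ \bphi)_k}{\partial s_j}(\bs) = \frac{\partial x_k}{\partial s_j}(\bphi(\bs)) \cdot \phi_j'(s_j),
\end{align*}
where the derivatives exist a.e.\ by Rademacher's theorem. Pulling the scalar factor $\phi_j'(s_j)$ out of column $j$ of the Jacobian minor matrix in Definition~\ref{def:jacobian_minor}, multilinearity of the determinant gives
\begin{align*}
J[(\bx \circ \bphi)_{P_i}](\bs) \;=\; J[\bx_{P_i}](\bphi(\bs)) \cdot \prod_{j=1}^\domdim \phi_j'(s_j).
\end{align*}
Substituting this into the defining integral \eqref{eq:original_mapping_space_monomial} yields
\begin{align*}
\Phi_m^{\cP,\bpi}(\bx \circ \bphi) = \int_{\intdom{m}{\domdim}{\bpi}} \prod_{i=1}^m J[\bx_{P_i}](\bphi(\bt_i)) \cdot \prod_{i=1}^m \prod_{j=1}^\domdim \phi_j'(t_{i,j}) \, d\bt.
\end{align*}

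Next, I would apply the change of variables $u_{i,j} := \phi_j(t_{i,j})$, coordinate by coordinate. The product $\prod_{i,j}\phi_j'(t_{i,j})\, d\bt$ is precisely $d\bu$, and we only need to check that the domain $\intdom{m}{\domdim}{\bpi} = \prod_{j=1}^\domdim \Delta^m_{\pi_j}$ is carried onto itself. This is where monotonicity and bijectivity of each $\phi_j$ are essential: they preserve the strict inequalities $t_{\pi_j(1),j} < \cdots < t_{\pi_j(m),j}$ defining $\Delta^m_{\pi_j}$, and since $\phi_j$ fixes the endpoints $0,1$ of $\square^1$, each simplex maps bijectively onto itself. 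After substitution the integral becomes $\int_{\intdom{m}{\domdim}{\bpi}} \prod_{i=1}^m J[\bx_{P_i}](\bu_i)\, d\bu = \Phi_m^{\cP,\bpi}(\bx)$, as required. The level-$m$ result for the full $\Phi_m$ then follows componentwise.

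\textbf{Main obstacle.} The only nontrivial point is justifying the change of variables in the Lipschitz category rather than the smooth one: each $\phi_j$ is merely absolutely continuous, and $\phi_j'$ exists only almost everywhere. This is handled by the standard Lipschitz change-of-variables theorem, which applies because $\phi_j$ is a Lipschitz monotone bijection of $\square^1$ (hence has a Lipschitz inverse almost everywhere), and the integrand $\prod_i J[\bx_{P_i}]$ lies in $L^\infty$ by the definition of the Jacobian minor operator. No genuine analytic difficulty arises, but being explicit about the a.e.\ validity of the chain rule for compositions of Lipschitz maps is the step that merits care.
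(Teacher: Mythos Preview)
Your proposal is correct and follows essentially the same approach as the paper: compute $J[(\bx\circ\bphi)_{P_i}]$ via the chain rule, factor out the product $\prod_j \phi_j'(s_j)$ from the determinant, and then perform the change of variables $u_{i,j}=\phi_j(t_{i,j})$ on each permuted simplex $\Delta^m_{\pi_j}$, noting that monotonicity preserves the ordering. Your added remark about the Lipschitz change-of-variables justification is a useful point of care that the paper leaves implicit.
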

\begin{proof}
	First, we note that for any order preserving bijection $P \in \cO_{\domdim, \coddim}$, the chain rule gives us
	\begin{align*}
		J[(\bx \circ \bphi)_P] (s_1, \ldots, s_{\domdim}) &= J[\bx_P]\big(\phi_1(s_1), \ldots, \phi_{\domdim}(s_{\domdim})\big) \cdot J[\bphi]\big(s_1, \ldots, s_{\domdim}\big) \\
		& = J[\bx_P]\big(\phi_1(s_1), \ldots, \phi_{\domdim}(s_{\domdim})\big)\cdot \phi'_1(s_1) \cdot \ldots \cdot\phi'_{\domdim}(s_{\domdim}).
	\end{align*}
	Then, applying this to $\Phi_{m}^{\cP, \bpi}(\bx \circ \phi)$, we get
	\begin{align*}
		\Phi_{m}^{\cP, \bpi}(\bx \circ \phi) & = \int_{\intdom{m}{\domdim}{\bpi}} \prod_{i=1}^m J[(\bx \circ \bphi)_{P_i}](t_{i,1}, \ldots, t_{i,\domdim}) d\bt \\
		& = \int_{\intdom{m}{\domdim}{\bpi}} \prod_{i=1}^m J[\bx_{P_i}](\phi_1(t_{i,1}), \ldots, \phi_{\domdim}(t_{i,\domdim})) \phi'_1(t_{i,1}) \ldots \phi'_{\domdim}(t_{i,\domdim}) d\bt.
	\end{align*}
	Next, we make the change of variables
	\begin{align*}
		\phi_j(t_{i,j}) &\mapsto t_{i,j}.
	\end{align*}
	Because each $\phi_j$ is a monotone increasing bijection, each simplex $\Delta^m_{\pi_j}$ is preserved under this transformation:
	\begin{align*}
		t_{\pi_j(1), j} < \ldots < t_{\pi_j(m), j} \quad \implies \quad \phi_j^{-1}(t_{\pi_j(1), j}) < \ldots < \phi_j^{-1}(t_{\pi_j(m), j}).
	\end{align*}
	Then, under this transformation, the above integral becomes
	\begin{align*}
		\Phi_{m}^{\cP,\bpi}(\bx \circ \bphi)  = \int_{\intdom{m}{\domdim}{\bpi}} \prod_{i=1}^m J[\bx_{P_i}](\bt_i) d\bt = \Phi_{m}^{\cP, \bpi}(\bx).
	\end{align*}
\end{proof}

\subsection{Maps with Trivial Signature} \label{ssec:trivial_signature}

Here, we consider maps $\bx \in \lipmap$ defined as a sum of $d$ paths. For such maps, we show that the mapping space monomials can be expressed as a linear combination of path signature monomials, which then leads to a class of maps which have trivial mapping space signature.\medskip

Let $\bg_1, \ldots, \bg_{\domdim} \in \lippath$ be a collection of paths on $V$, where each path is written component-wise as $\bg_j = (g_{1,j}, \ldots, g_{\coddim,j})$. Note that $j$ is fixed in the second index. Define the map $\bx = (x_1, \ldots, x_\coddim): \square^{\domdim} \rightarrow V$ componentwise by
\begin{align*}
    x_k(s_1, \ldots, s_{\domdim}) = \sum_{j=1}^d g_{k,j}(s_j).
\end{align*}
Because this is a sum of Lipschitz maps, $\bx \in \lipmap$. Note that as a matrix, the Jacobian of $\bx$ is 
\begin{align*}
    \big(d\bx(\bs)\big)_{k,j} = g'_{k,j}(s_j).
\end{align*}
Thus, given an order-preserving injection $P: [\domdim] \rightarrow [\coddim]$, the Jacobian minor is
\begin{align}
    J[\bx_P](s_1, \ldots, s_{\domdim}) = \sum_{\sigma \in \Sigma_d} (-1)^{\sgn(\sigma)} \prod_{q=1}^{\domdim} g'_{P(q),\sigma(q)}(s_{\sigma(q)}).
\end{align}
Now, we can compute the mapping space monomials for such maps as follows. Suppose $(\cP, \bpi) \in \cO_{\domdim,\coddim}^m \times \Sigma_m^\domdim$. Then,
\begin{align*}
    \Phi_{m}^{\cP, \bpi}(\bx) & = \int_{\intdom{m}{\domdim}{\bpi}} \prod_{i=1}^m J[\bx_{P_i}](t_{i,1}, \ldots, t_{i,d}) d\bt\\
    & = \int_{\intdom{m}{\domdim}{\bpi}} \prod_{i=1}^m \left(\sum_{\sigma_i \in \Sigma_{\domdim}} (-1)^{\sgn(\sigma_i)} \prod_{q=1}^{\domdim} g'_{P_i(q), \sigma_i(q)}(t_{i,\sigma_i(q)})\right) d\bt \\
    & = \int_{\intdom{m}{\domdim}{\bpi}} \left(\sum_{\bsigma  \in \Sigma_{\domdim}^m} (-1)^{\sgn(\bsigma)} \prod_{i=1}^m \prod_{q=1}^{\domdim} g'_{P_i(q), \sigma_i(q)}(t_{i,\sigma_i(q)})\right) d\bt \\
    & = \sum_{\bsigma \in \Sigma_{\domdim}^m} (-1)^{\sgn(\bsigma)} \int_{\intdom{m}{\domdim}{\bpi}}\prod_{q=1}^{\domdim} \prod_{i=1}^m g'_{P_i(q), \sigma_i(q)}(t_{i,\sigma_i(q)}) d\bt,
\end{align*}
where we use $\bsigma = (\sigma_1, \ldots, \sigma_m) \in \Sigma_\domdim^m$. At this point, we break the inner integral up into a product of $d$ integrals. This is done by using the structure of the domain of integration
\begin{align*}
    \intdom{m}{\domdim}{\bpi} = \Delta^m_{\pi_1} \times \ldots \times \Delta^m_{\pi_{\domdim}}
\end{align*}
where $\Delta^m_{\pi_j}$ is parametrized by $\bt^j = (t_{1,j}, \ldots, t_{m,j})$ (note that $j$ is the second index). Therefore, we write the double product in the integrand as a product of functions of $t_{i,j}$ for fixed $j$. Given $\bsigma \in \Sigma_{\domdim}^m$ and some $j \in [\domdim]$, for every $i$, there exists a unique $q \in [\domdim]$ such that $\sigma_i(q) = j$. Thus, we re-index the double product in terms of $j$ rather than $q$:
\begin{align*}
    \prod_{q=1}^{\domdim} \prod_{i=1}^m g'_{P_i(q), \sigma_i(q)}(t_{i,\sigma_i(q)}) = \prod_{j=1}^{\domdim} \prod_{i=1}^m g'_{P_i(\sigma_i^{-1}(j)),j}(t_{i,j}).
\end{align*}
Continuing along with the above manipulations, we get
\begin{align*}
    \Phi_{m}^{\cP,\bpi}(\bx) & = \sum_{\bsigma \in \Sigma_{\domdim}^m} (-1)^{\sgn(\bsigma)} \int_{\bt \in \intdom{m}{\domdim}{\bpi}}\prod_{j=1}^{\domdim} \prod_{i=1}^m g'_{P_i(\sigma_i^{-1}(j)), j}(t_{i,j}) d\bt \\
    & = \sum_{\bsigma \in \Sigma_{\domdim}^m} (-1)^{\sgn(\bsigma)}  \prod_{j=1}^{\domdim} \int_{\bt^j \in \Delta^m_{\pi_j}} \prod_{i=1}^m g'_{P_i(\sigma_i^{-1}(j)),j}(t_{i,j}) d\bt^j \\
    & = \sum_{\bsigma \in \Sigma_{\domdim}^m} (-1)^{\sgn(\bsigma)}  \prod_{j=1}^{\domdim} S_m^{Q(\cP, \bpi, \bsigma, j)}(\bg_j),
\end{align*}
where $S^P_m: \lippath \rightarrow \R$ is the classical path signature (\Cref{eq:path_signature_def}). Here, $Q(\cP, \bpi, \bsigma, j)$ is a multi-index of length $m$ valued in $[\coddim]$, defined as follows. Suppose $(\cP, \bpi) \in \cO_{\domdim,\coddim}^m \times \Sigma_m^\domdim$, $\bsigma \in \Sigma_{\domdim}^m$, and $j \in [\domdim]$. Then, 
\begin{align}
\label{eq:Qdef}
    Q(\cP, \bpi, \bsigma, j) = \pi_j^{-1} \cdot \left(P_1(\sigma_1^{-1}(j)), \ldots, P_m(\sigma_m^{-1}(j))\right),
\end{align}
where $\left(P_1(\sigma_1^{-1}(j)), \ldots, P_m(\sigma_m^{-1}(j))\right)$ is a multi-index of length $m$ valued in $[\coddim]$, and $\pi_{j}^{-1} \in \Sigma_m$ acts by permutation of elements. Thus, we have proved the following.

\begin{proposition}
    Let $\bg_1, \ldots, \bg_{\domdim}\in \lippath$. Let $\bx: \square^{\domdim} \rightarrow V$ be defined by
    \begin{align}
    \label{eq:gamma_path}
        \bx(s_1, \ldots, s_{\domdim}) \coloneqq \sum_{j=1}^\domdim \bg_j(s_j). 
    \end{align}
    Then,
    \begin{align}
    \label{eq:gamma_path_expression}
        \Phi_{m}^{\cP,\bpi}(\bx) = \sum_{\bsigma \in \Sigma_{\domdim}^m} (-1)^{\sgn(\bsigma)}  \prod_{j=1}^{\domdim} S_m^{Q(\cP, \bpi, \bsigma, j)}(\bg_j),
    \end{align}
    where $Q(\cP, \bpi, \bsigma, j)$ is given in~\Cref{eq:Qdef}.
\end{proposition}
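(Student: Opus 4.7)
The plan is to exploit the fact that $\bx(s_1,\ldots,s_\domdim)=\sum_{j=1}^\domdim\bg_j(s_j)$ has a highly structured Jacobian: the entry $(d\bx(\bs))_{k,j}=g'_{k,j}(s_j)$ depends only on $s_j$. First I would compute, for any $P\in\cO_{\domdim,\coddim}$, the minor
\[
J[\bx_P](\bs)=\det\bigl(g'_{P(q),r}(s_r)\bigr)_{q,r}=\sum_{\sigma\in\Sigma_\domdim}(-1)^{\sgn(\sigma)}\prod_{q=1}^{\domdim}g'_{P(q),\sigma(q)}(s_{\sigma(q)})
\]
via the Leibniz expansion of the determinant.

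Next I would substitute this into the definition of $\Phi_m^{\cP,\bpi}(\bx)$ and expand the product over $i\in[m]$ of the resulting sum over $\Sigma_\domdim$ into a single sum indexed by $\bsigma=(\sigma_1,\ldots,\sigma_m)\in\Sigma_\domdim^m$, with sign $(-1)^{\sgn(\bsigma)}=\prod_i(-1)^{\sgn(\sigma_i)}$. This yields
\[
\Phi_m^{\cP,\bpi}(\bx)=\sum_{\bsigma\in\Sigma_\domdim^m}(-1)^{\sgn(\bsigma)}\int_{\intdom{m}{\domdim}{\bpi}}\prod_{i=1}^m\prod_{q=1}^{\domdim}g'_{P_i(q),\sigma_i(q)}\bigl(t_{i,\sigma_i(q)}\bigr)\,d\bt.
\]

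The key manipulation, and the step I expect to require the most care, is the re-indexing that turns the double product over $(i,q)$ into a double product over $(i,j)$ compatible with the product structure of the integration domain. For fixed $\bsigma$ and $j\in[\domdim]$ there is a unique $q=\sigma_i^{-1}(j)$ making $\sigma_i(q)=j$, so the integrand factors as $\prod_{j=1}^{\domdim}\prod_{i=1}^m g'_{P_i(\sigma_i^{-1}(j)),j}(t_{i,j})$. Because the $j$-th factor depends only on the variables $t_{1,j},\ldots,t_{m,j}$ parametrising $\Delta^m_{\pi_j}$, Fubini's theorem splits the integral over $\intdom{m}{\domdim}{\bpi}=\prod_j\Delta^m_{\pi_j}$ into a product of $\domdim$ single-path iterated integrals over the permuted simplices $\Delta^m_{\pi_j}$.

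Finally I would recognise each such factor as a path signature monomial. By the permutation invariance of~\Cref{prop:permutation_invariance} specialised to $\domdim=1$ (or directly by the change of variables $t_{i,j}\mapsto t_{\pi_j(i),j}$ that sends $\Delta^m_{\pi_j}$ to the standard $\Delta^m$), integrating the product $\prod_i g'_{P_i(\sigma_i^{-1}(j)),j}(t_{i,j})$ over $\Delta^m_{\pi_j}$ equals $S_m^{Q(\cP,\bpi,\bsigma,j)}(\bg_j)$ with $Q(\cP,\bpi,\bsigma,j)=\pi_j^{-1}\cdot(P_1(\sigma_1^{-1}(j)),\ldots,P_m(\sigma_m^{-1}(j)))$ as in~\Cref{eq:Qdef}. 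Substituting these identifications gives the claimed formula~\eqref{eq:gamma_path_expression}. The only non-routine bookkeeping is the simultaneous re-indexing $q\leftrightarrow j$ via $\sigma_i^{-1}$ together with tracking how $\pi_j$ enters the multi-index upon normalising the simplex, which is precisely how the factor $\pi_j^{-1}$ arises in the definition of $Q$.
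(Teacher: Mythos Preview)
Your proposal is correct and follows essentially the same approach as the paper: Leibniz expansion of each Jacobian minor, expansion of the product over $i$ into a sum over $\bsigma\in\Sigma_\domdim^m$, the re-indexing $q\mapsto j=\sigma_i(q)$ to separate variables, Fubini over $\intdom{m}{\domdim}{\bpi}=\prod_j\Delta^m_{\pi_j}$, and identification of each factor as a path signature monomial. You even make slightly more explicit than the paper does how the factor $\pi_j^{-1}$ in $Q$ arises from normalising $\Delta^m_{\pi_j}$ to the standard simplex.
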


An immediate corollary is the following.
\begin{corollary}
\label{cor:trivial_ms_signature}
    Let $\bg_1, \ldots, \bg_{\domdim} \in \lippath$. Let $\bx: \square^{\domdim} \rightarrow V$ be defined by~\Cref{eq:gamma_path}. Furthermore, suppose at least one of the paths $\bg_j$ is tree-like\footnote{A path $\bx \in \lippath$ is \emph{tree-like}~\cite{hambly_uniqueness_2010, hambly_notes_2008} if and only if there exists some $\R$-tree (i.e., a metric space in which any two points are connected by a unique arc which is isometric to a real interval) $\fT$ such that $\bx$ decomposes as
    \[
    \bx: \square^1 \xrightarrow{\phi} \fT \xrightarrow{\psi} V
    \]
    where $\phi$ and $\psi$ are continuous maps with $\phi(0) = \phi(1)$.}. Then, for any $m \in \N$ and $(\cP, \bpi) \in \cO_{\domdim,\coddim}^m \times \Sigma_m^\domdim$, we have
    \begin{align*}
        \Phi_{m}^{\cP,\bpi}(\bx) = 0.
    \end{align*}
\end{corollary}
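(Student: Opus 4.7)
The plan is to apply the preceding proposition (equation~\eqref{eq:gamma_path_expression}) and reduce the vanishing of $\Phi_m^{\cP,\bpi}(\bx)$ to the classical fact that tree-like paths have trivial path signature. Since $\bx$ has the coordinate-sum form \eqref{eq:gamma_path}, the proposition gives
\[
\Phi_{m}^{\cP,\bpi}(\bx) = \sum_{\bsigma \in \Sigma_{\domdim}^m} (-1)^{\sgn(\bsigma)}  \prod_{j=1}^{\domdim} S_m^{Q(\cP, \bpi, \bsigma, j)}(\bg_j),
\]
so it suffices to show that every summand is zero.

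Fix the index $j_0 \in [\domdim]$ for which $\bg_{j_0}$ is tree-like, and fix an arbitrary $\bsigma \in \Sigma_\domdim^m$. The product over $j$ contains the factor $S_m^{Q(\cP, \bpi, \bsigma, j_0)}(\bg_{j_0})$. By the Hambly--Lyons uniqueness theorem~\cite{hambly_uniqueness_2010} cited in the footnote, a tree-like path has trivial path signature at every positive level; that is, $S_m^{Q}(\bg_{j_0}) = 0$ for all $m \geq 1$ and every multi-index $Q \in \cO^m_{1,\coddim}$. Since $m \in \N$ here means $m \geq 1$ (the $m=0$ case being the constant $1$ excluded from the statement), this factor vanishes, and hence the whole product vanishes for each $\bsigma$.

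Summing the vanishing terms over $\bsigma \in \Sigma_\domdim^m$ yields $\Phi_m^{\cP, \bpi}(\bx) = 0$, completing the proof. I do not anticipate any real obstacle here: the algebraic identity from the preceding proposition reduces the mapping-space monomial to a sum of products of one-dimensional path signature terms, and the tree-like hypothesis kills one factor in each product. The only subtlety worth flagging is the implicit appeal to the Hambly--Lyons characterization, which guarantees the triviality of \emph{every} component of the path signature of a tree-like path (not merely of its shuffle-indexed part), so that no matter which multi-index $Q(\cP,\bpi,\bsigma,j_0)$ appears, the corresponding factor is zero.
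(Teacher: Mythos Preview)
Your proof is correct and follows essentially the same approach as the paper: apply the preceding proposition to express $\Phi_m^{\cP,\bpi}(\bx)$ as a sum of products of path signature terms, then use the Hambly--Lyons result that a tree-like path has trivial signature to kill one factor in every summand. The paper's version is slightly terser (it simply assumes without loss of generality that $\bg_1$ is tree-like), but the logic is identical.
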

\begin{proof}
	Without loss of generality, suppose $\bg_1$ is tree-like. Because $\bg_1$ is tree-like, its path signature is trivial~\cite{hambly_uniqueness_2010}, so for any multi-index $P$ valued in $[\coddim]$, $S^P(\bg_1) = 0$. Then, by~\Cref{eq:gamma_path_expression}, $\Phi_{m}^{\cP, \bpi}(\bx) = 0$ for any $(\cP, \bpi) \in \cO_{\domdim,\coddim}^m \times \Sigma_m^\domdim$. 
\end{proof}

\section{Equivariance}\label{sec:equivariance}

In this section, we consider equivariance properties of the mapping space signature. There are two natural group actions on the space of Jacobian equivalence classes of Lipschitz maps $\Jlipmap$:
\begin{enumerate}
    \item \textbf{pre-composition:} a right action of the $d$-dimensional hyperoctadral group $B_{\domdim}$ on the domain $\square^{\domdim}$;
    \item \textbf{post-composition:} a left action of the group of linear operators $\GLV$ on the codomain $V$.
\end{enumerate}
In fact, given another finite dimensional vector space $W$, a linear transformation $A \in L(V,W)$ induces a map $A : \Jlipmap \rightarrow \JlipmapW$. These group actions and linear transformations also induce corresponding actions and transformations on the permutation tensor spaces $\psphspace{\domdim}{}{V}$, and we will show that the mapping space signature is compatible with these induced maps.

In the case of paths, the $1$-dimensional hyperoctahedral group is simply $B_1 \cong \Z_2$, where the nontrivial action of $\tau \in B_1$ on $\bx \in \lippath$ corresponds to time reversal
\begin{align}
\label{eq:path_reversal}
    (\mapact{\bx}{}{\tau})(s) \coloneqq \bx(1-s).
\end{align}
It is a classical result~\cite{chen_iterated_1954} that the signature of a time-reversed path is obtained by the tensor inverse
\begin{align*}
    S(\mapact{\bx}{}{\tau}) = (S(\bx))^{-1}.
\end{align*}
When this is combined with the shuffle identity for paths (or through direct computation), this can be formulated at the level of monomials. In particular, given $I = (i_1, \ldots, i_m)$, let $\cev{I} = (i_m, \ldots, i_1)$. Then, the above identity is equivalent to
\begin{align*}
    S^I(\mapact{\bx}{}{\tau}) = S^{\cev{I}}(\bx).
\end{align*}

In the case of post-composition, a linear transformation $A \in L(V,W)$ induces linear transformations $A^{\otimes m} \in L(V^{\otimes m}, W^{\otimes m})$ for all $m \geq 1$. Then, by the linearity of integration, we find that the path signature transforms with respect to these induced transformations~\cite{friz_multidimensional_2010},
\begin{align*}
    S_m(A \bx) = A^{\otimes m} S_m(\bx),
\end{align*}
where $S_m(\bx) \in V^{\otimes m}$ is the full level $m$ path signature of $\bx$. 

\subsection{Action on the Domain}
The $\domdim$-dimensional hyperoctahedral group $B_{\domdim}$ is the group of symmetries of the $d$-cube, defined by the semi-direct product $B_{\domdim} \coloneqq \Z_2^{\domdim} \rtimes \Sigma_{\domdim}\label{eq:hyperoctahedral_def}$, where the action of $\Sigma_{\domdim}$ on $\Z_2^{\domdim}$ is given by permutation of components. Given $(\btau, \sigma) \in B_{\domdim}$, the element $\btau =(\tau_1, \ldots, \tau_{\domdim}) \in \Z_2^{\domdim}$ represents reflections of coordinates and $\sigma \in \Sigma_{\domdim}$ represents rotations of coordinates. This induces a map $\rho_{\btau, \sigma}: \square^{\domdim} \rightarrow \square^{\domdim}$ given by
\begin{align*}
    \rho_{\btau, \sigma}(\bs) = \left(s_{\sigma(1)}^{\tau_{\sigma(1)}}, \ldots, s_{\sigma(\domdim)}^{\tau_{\sigma(\domdim)}}\right),
\end{align*}
where $\bs \in \square^{\domdim}$ and
\begin{align*}
    s_j^0 \coloneqq s_j, \quad \quad s_j^1 \coloneqq 1-s_j
\end{align*}
for any $j \in [\domdim]$. We define the action of $(\btau, \sigma) \in B_{\domdim}$ on $\bx \in \Jlipmap$ by
\begin{align}
\label{eq:Bd_lip_action}
    \mapact{\bx}{\sigma}{\btau} \coloneqq \bx \circ \rho_{\btau, \sigma}. 
\end{align}
We note that this action is well-defined on Jacobian equivalence classes by the chain rule and the fact that the Jacobian of $\rho_{\btau, \sigma}$ is constant. We will begin studying the equivariance of the mapping space signature with respect to this action by considering the \emph{reflection} ($\Z_2^{\domdim}$) and \emph{rotation} ($\Sigma_{\domdim}$) components separately. 
\begin{definition}
Let $\pi \in \Sigma_m$, we define the \emph{reversal} of $\pi$, denoted $\cev{\pi} \in \Sigma_m$, by
\begin{align}
\label{eq:reversal_def}
    \cev{\pi}(i) \coloneqq \pi(m-i)
\end{align}
for all $i \in [m]$
\end{definition}

\begin{lemma}
    Suppose $\bx \in \Jlipmap$, $\btau = (\tau_1, \ldots, \tau_{\domdim}) \in \Z_2^d$, and $(\cP, \bpi) \in \cO_{\domdim,\coddim}^m \times \Sigma_m^\domdim$ is a level $m$ index. Define
    \begin{align*}
        \bpi^\btau = \left(\pi_1^{\tau_1}, \ldots, \pi_{\domdim}^{\tau_{\domdim}}\right),
    \end{align*}
    where
    \begin{align*}
        \pi_j^0 \coloneqq \pi_j, \quad \quad \pi_j^1 \coloneqq \cev{\pi}_j.
    \end{align*}
    Then,
    \begin{align}
        \Phi_m^{\cP, \bpi}(\mapact{\bx}{\id}{\btau}) = (-1)^{m|\btau|} \Phi_m^{\cP, \bpi^\btau}(\bx),
    \end{align}
    where $|\btau|$ is the number of nonzero entries of $\btau$. 
\end{lemma}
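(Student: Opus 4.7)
The plan is to unfold the definition of $\Phi_m^{\cP,\bpi}$ and apply the chain rule followed by a change of variables in each reflected coordinate direction. Starting from
\[
\Phi_m^{\cP,\bpi}(\mapact{\bx}{\id}{\btau}) = \int_{\intdom{m}{\domdim}{\bpi}} \prod_{i=1}^m J\!\left[(\bx\circ\rho_{\btau,\id})_{P_i}\right](\bt_i)\, d\bt,
\]
I would first observe that the reflection $\rho_{\btau,\id}(\bs)=(s_1^{\tau_1},\ldots,s_\domdim^{\tau_\domdim})$ has diagonal Jacobian with entries $(-1)^{\tau_j}$, so its Jacobian determinant is $(-1)^{|\btau|}$. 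By the chain rule (already invoked in the proof of \Cref{prop:ms_reparametrization_invariance}),
\[
J\!\left[(\bx\circ\rho_{\btau,\id})_{P_i}\right](\bt_i) = (-1)^{|\btau|}\, J[\bx_{P_i}]\!\left(\rho_{\btau,\id}(\bt_i)\right),
\]
and multiplying these $m$ factors together produces the global sign $(-1)^{m|\btau|}$ appearing in the statement.

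Next I would perform the change of variables $u_{i,j} = t_{i,j}^{\tau_j}$; that is, $u_{i,j}=t_{i,j}$ when $\tau_j=0$ and $u_{i,j}=1-t_{i,j}$ when $\tau_j=1$. This is an involution of the ambient cube with $|\det|=1$, so no extra sign or Jacobian factor is introduced beyond what we have already extracted. The crucial point is tracking the effect on the domain $\intdom{m}{\domdim}{\bpi}=\Delta^m_{\pi_1}\times\cdots\times\Delta^m_{\pi_\domdim}$: in each coordinate $j$ with $\tau_j=0$ the factor $\Delta^m_{\pi_j}$ is unchanged, while in each coordinate with $\tau_j=1$ the inequalities $t_{\pi_j(1),j}<\cdots<t_{\pi_j(m),j}$ become $u_{\pi_j(m),j}<\cdots<u_{\pi_j(1),j}$, i.e.\ the $j$-th factor becomes $\Delta^m_{\cev{\pi}_j}$ by the definition of $\cev{\pi}$. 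Combining both cases, the new domain is exactly $\intdom{m}{\domdim}{\bpi^\btau}$, and the integrand becomes $\prod_i J[\bx_{P_i}](\bu_i)$, yielding
\[
\Phi_m^{\cP,\bpi}(\mapact{\bx}{\id}{\btau}) = (-1)^{m|\btau|}\int_{\intdom{m}{\domdim}{\bpi^\btau}} \prod_{i=1}^m J[\bx_{P_i}](\bu_i)\, d\bu = (-1)^{m|\btau|}\,\Phi_m^{\cP,\bpi^\btau}(\bx),
\]
which is what we want.

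The main obstacle is the bookkeeping on the domain of integration rather than anything analytically deep: one has to verify carefully that flipping the coordinate $t_{\cdot,j}\mapsto 1-t_{\cdot,j}$ sends the simplex $\Delta^m_{\pi_j}$ precisely to $\Delta^m_{\cev{\pi}_j}$ as defined in \Cref{eq:reversal_def}, and that these reversals happen independently across the $\domdim$ coordinate directions so that the product domain simply transforms factor-by-factor into $\intdom{m}{\domdim}{\bpi^\btau}$. The sign accounting splits cleanly into two independent pieces: the $(-1)^{m|\btau|}$ from the $m$ Jacobian minors (each contributing $(-1)^{|\btau|}$ via the chain rule) and the cancellation-free change of variables on the domain. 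No algebraic cancellation between the two is needed, which is why the exponent of $-1$ is exactly $m|\btau|$ and not affected by the number of coordinates that are permuted nontrivially.
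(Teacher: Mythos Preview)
Your proof is correct and follows essentially the same approach as the paper: extract the sign $(-1)^{|\btau|}$ from each Jacobian minor via the chain rule, then perform the change of variables $t_{i,j}\mapsto 1-t_{i,j}$ in the reflected directions to turn each factor $\Delta^m_{\pi_j}$ into $\Delta^m_{\cev{\pi}_j}$. The only difference is organizational---the paper first treats the case of a single nonzero $\tau_{j'}$ and then iterates, whereas you handle all reflected coordinates simultaneously; the content is identical.
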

\begin{proof}
    We begin by considering the case where $\tau_{j'} =1$ and $\tau_j = 0$ for all $j \neq j'$. Given any $P \in \cO_{\domdim,\coddim}$, note that
    \begin{align*}
        J[(\mapact{\bx}{\id}{\btau})_P](s_1, \ldots, s_{\domdim}) = - J[\bx_P](s_1, \ldots, 1- s_{j'}, \ldots s_{\domdim}).
    \end{align*}
    Then, 
    \begin{align*}
        \Phi^{\cP, \bpi}_m(\mapact{\bx}{\id}{\btau}) = \int_{\intdom{m}{\domdim}{\bpi}} (-1)^m \prod_{i=1}^m J[\bx_{P_i}](t_{i,1}, \ldots,  1- t_{i,j'}, \ldots, t_{i,\domdim}) d\bt.
    \end{align*}
    Now, note that all of the reflected coordinates $t_{i,j'}$ lie in the same $m$-simplex in $\intdom{m}{\domdim}{\bpi}$, in particular, $(t_{1,j'}, \ldots, t_{m,j'}) \in \Delta^m_{\pi_{j'}}$. By performing the change of variables
    \begin{align*}
        1 - t_{i,j'} \mapsto t_{i,j'},
    \end{align*}
    the factor $\Delta^m_{\pi_{j'}}$ in the domain becomes $\Delta^m_{\cev{\pi}_{j'}}$, and thus we have
    \begin{align*}
        \Phi^{\cP, \bpi}_m(\mapact{\bx}{\id}{\btau}) = (-1)^m \Phi_m^{\cP, \bpi^\btau}(\bx).
    \end{align*}
    For the case of arbitrary $\btau \in \Z_2^d$, we repeat the same change of coordinates for each nonzero entry and obtain a factor of $(-1)^{m|\btau|}$.
\end{proof}

\begin{lemma}
    Suppose $\bx \in \Jlipmap, \sigma \in \Sigma_{\domdim}$, and $(\cP, \bpi) \in \cO_{\domdim,\coddim}^m \times \Sigma_m^\domdim$ is a level $m$ index. Define
    \begin{align}
        \bpi_\sigma \coloneqq \left(\pi_{\sigma(1)}, \ldots, \pi_{\sigma(d)}\right).
    \end{align}
    Then,
    \begin{align*}
        \Phi^{\cP, \bpi}_m(\mapact{\bx}{\sigma}{\bzero}) = (-1)^{m \cdot \sgn(\sigma)} \Phi_m^{\cP, \bpi_\sigma}(\bx).
    \end{align*}
\end{lemma}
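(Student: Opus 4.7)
The plan is to proceed by the same two-step template used in the previous reflection lemma: first transport the rotation $\rho_{\bzero,\sigma}$ through the Jacobian minor via the chain rule to extract a global sign, then absorb the remaining coordinate permutation into the integration domain by a measure-preserving change of variables.

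First I would compute $J[(\mapact{\bx}{\sigma}{\bzero})_P](\bs)$ for a single $P \in \cO_{\domdim,\coddim}$. Since $\rho_{\bzero,\sigma}(\bs) = (s_{\sigma(1)}, \ldots, s_{\sigma(\domdim)})$, the chain rule gives
\begin{align*}
  \frac{\partial (\bx \circ \rho_{\bzero,\sigma})_{P(j)}}{\partial s_k}(\bs) = \frac{\partial x_{P(j)}}{\partial s_{\sigma(k)}}\bigl(\rho_{\bzero,\sigma}(\bs)\bigr),
\end{align*}
so the Jacobian matrix of $(\bx \circ \rho_{\bzero,\sigma})_P$ at $\bs$ is obtained from the Jacobian matrix of $\bx_P$ at $\rho_{\bzero,\sigma}(\bs)$ by permuting its columns according to $\sigma$. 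Taking determinants yields $J[(\mapact{\bx}{\sigma}{\bzero})_P](\bs) = (-1)^{\sgn(\sigma)} J[\bx_P]\bigl(s_{\sigma(1)},\ldots,s_{\sigma(\domdim)}\bigr)$, where $\sgn(\sigma)$ is read as the parity (consistent with the notation in the preceding lemma).

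Substituting into the definition of $\Phi^{\cP,\bpi}_m$ factors out $(-1)^{m\cdot\sgn(\sigma)}$ across the $m$-fold product:
\begin{align*}
  \Phi^{\cP,\bpi}_m(\mapact{\bx}{\sigma}{\bzero}) = (-1)^{m\cdot\sgn(\sigma)}\int_{\intdom{m}{\domdim}{\bpi}} \prod_{i=1}^m J[\bx_{P_i}]\bigl(t_{i,\sigma(1)},\ldots,t_{i,\sigma(\domdim)}\bigr)\, d\bt.
\end{align*}
Next I would apply the change of variables $u_{i,j} := t_{i,\sigma(j)}$, which permutes the $d$ blocks of coordinates $\bt^j = (t_{1,j},\ldots,t_{m,j})$: the new block $\bu^j$ equals the old block $\bt^{\sigma(j)}$, which by definition lies in $\Delta^m_{\pi_{\sigma(j)}}$. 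The resulting domain is exactly $\intdom{m}{\domdim}{\bpi_\sigma}$, and because this change of variables is an orthogonal permutation of Euclidean coordinates the absolute Jacobian is $1$, so Lebesgue measure is preserved. The integrand becomes $\prod_i J[\bx_{P_i}](\bu_i)$, and the integral equals $\Phi^{\cP,\bpi_\sigma}_m(\bx)$.

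The only mildly delicate point is tracking how the column permutation interacts with the evaluation point and confirming that the coordinate relabeling sends the product $\prod_j \Delta^m_{\pi_j}$ onto $\prod_j \Delta^m_{\pi_{\sigma(j)}}$ rather than $\prod_j \Delta^m_{\pi_{\sigma^{-1}(j)}}$; this is a direct bookkeeping check from the definition $\bu^j = \bt^{\sigma(j)}$, which forces $\bu^j \in \Delta^m_{\pi_{\sigma(j)}}$ and hence matches the stated $\bpi_\sigma = (\pi_{\sigma(1)},\ldots,\pi_{\sigma(d)})$. Everything else is routine: no additional sign arises from the change of variables, so combining with the sign extracted from the Jacobian minor yields the claimed identity.
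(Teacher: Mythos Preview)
Your proposal is correct and follows essentially the same approach as the paper: compute the Jacobian minor of the rotated map via the chain rule to extract the sign $(-1)^{\sgn(\sigma)}$ from the column permutation, then apply the change of variables $t_{i,\sigma(j)} \mapsto t_{i,j}$ to transform $\intdom{m}{\domdim}{\bpi}$ into $\intdom{m}{\domdim}{\bpi_\sigma}$. Your write-up is slightly more explicit about the chain-rule step and the $\sigma$ versus $\sigma^{-1}$ bookkeeping, but the argument is the same.
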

\begin{proof}
    The action of $(\bzero, \sigma)$ on $\bx$ permutes the columns of the Jacobian by $\sigma$, and therefore
    \begin{align*}
        J[(\mapact{\bx}{\sigma}{\bzero})_P](s_1, \ldots, s_{\domdim}) = (-1)^{\sgn(\sigma)} J[\bx_P](s_{\sigma(1)}, \ldots, s_{\sigma(\domdim)}).
    \end{align*}
    Then,
    \begin{align*}
        \Phi_m^{\cP, \bpi}(\mapact{\bx}{\sigma}{\bzero}) = \int_{\intdom{m}{\domdim}{\bpi}} (-1)^{m \cdot\sgn(\sigma)} \prod_{i=1}^m J[\bx_{P_i}](t_{i, \sigma(1)}, \ldots, t_{i, \sigma(\domdim)}) \, d\bt.
    \end{align*}
    We apply the change of coordinates
    \begin{align*}
        t_{i, \sigma(j)} \mapsto t_{i, j},
    \end{align*}
    which transforms the domain from $\intdom{m}{\domdim}{\bpi}$ into $\intdom{m}{\domdim}{\bpi_{\sigma}}$.
\end{proof}

The transformation of the mapping space monomials with respect to the reflection and rotation actions on the domain of maps suggests the following right action of $B_{\domdim}$ on $\psphspace{\domdim}{}{V}$. Fix $m \in \N$, let $(\btau, \sigma) \in B_{\domdim}$, and let $\bpi \in \Sigma_m^{\domdim}$. We define
\begin{align}
\label{eq:Bd_perm_action}
    \bpi^\btau_\sigma \coloneqq \left(\pi_{\sigma(1)}^{\tau_{\sigma(1)}}, \ldots, \pi_{\sigma(\domdim)}^{\tau_{\sigma(\domdim)}}\right).
\end{align}
Next, let
\begin{align*}
    (\br^\bpi)_{\bpi \in \Sigma_m^{\domdim}} \in \prod_{\bpi \in \Sigma_m^{\domdim}} \left( \Lambda^\domdim V\right)^{\otimes m},
\end{align*}
where $\br^\bpi \in \Lambda^\domdim V$. Then, for any $m \in \N$, the action is given by
\begin{align}
\label{eq:Bd_H_action}
    (\br^\bpi) \cdot (\btau, \sigma) \mapsto (-1)^{m\cdot(|\btau| + \sgn(\sigma))}(\br^{\bpi_\sigma^\btau}),
\end{align}
and we extend this to $\psphspace{\domdim}{}{V}$. 

\begin{proposition}
\label{prop:Bd_equivariance}
    The mapping space signature $\Phi$ is $B_{\domdim}$-equivariant with respect to the action on $\Jlipmap$ defined in~\Cref{eq:Bd_lip_action} and the action on $\psphspace{\domdim}{}{V}$ defined in~\Cref{eq:Bd_H_action}.
\end{proposition}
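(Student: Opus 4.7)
The plan is to reduce \Cref{prop:Bd_equivariance} to the two preceding lemmas by decomposing an arbitrary element $(\btau,\sigma) \in B_\domdim$ into a pure rotation followed by a pure reflection, and then verifying that the composite sign and permutation-index transformation that emerges is precisely the action defined in~\Cref{eq:Bd_H_action}. Since both actions are defined level-by-level, it suffices to establish the identity
\begin{equation*}
\Phi_m^{\cP,\bpi}\bigl(\mapact{\bx}{\sigma}{\btau}\bigr) \;=\; (-1)^{m(|\btau|+\sgn(\sigma))}\,\Phi_m^{\cP,\bpi_\sigma^\btau}(\bx)
\end{equation*}
for every $m \geq 0$, level-$m$ index $(\cP,\bpi)$, and $\bx \in \Jlipmap$, and then read off equivariance componentwise.

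First I would verify the factorization $\rho_{\btau,\sigma} = \rho_{\bzero,\sigma} \circ \rho_{\btau,\id}$ of the symmetry map of $\square^\domdim$. Evaluating the right-hand side, $\rho_{\btau,\id}(\bs) = (s_1^{\tau_1},\ldots,s_\domdim^{\tau_\domdim})$, and then applying $\rho_{\bzero,\sigma}$ permutes coordinates by $\sigma$, giving $(s_{\sigma(1)}^{\tau_{\sigma(1)}},\ldots,s_{\sigma(\domdim)}^{\tau_{\sigma(\domdim)}})$, which agrees with the definition of $\rho_{\btau,\sigma}$. Consequently, for any $\bx \in \Jlipmap$,
\begin{equation*}
\mapact{\bx}{\sigma}{\btau} \;=\; \bx \circ \rho_{\btau,\sigma} \;=\; \mapact{\bigl(\mapact{\bx}{\sigma}{\bzero}\bigr)}{\id}{\btau}.
\end{equation*}

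Next, I would apply the two lemmas in sequence. Setting $\by := \mapact{\bx}{\sigma}{\bzero}$, the reflection lemma gives
\begin{equation*}
\Phi_m^{\cP,\bpi}\bigl(\mapact{\by}{\id}{\btau}\bigr) \;=\; (-1)^{m|\btau|}\,\Phi_m^{\cP,\bpi^\btau}(\by),
\end{equation*}
and the rotation lemma then yields
\begin{equation*}
\Phi_m^{\cP,\bpi^\btau}\bigl(\mapact{\bx}{\sigma}{\bzero}\bigr) \;=\; (-1)^{m\sgn(\sigma)}\,\Phi_m^{\cP,(\bpi^\btau)_\sigma}(\bx).
\end{equation*}
Combining the two gives the displayed identity once one checks the purely notational equality $(\bpi^\btau)_\sigma = \bpi_\sigma^\btau$: unpacking definitions, the left side has $j$-th component $\pi_{\sigma(j)}^{\tau_{\sigma(j)}}$ (reflect by $\btau$, then permute coordinates by $\sigma$), which is exactly $\bpi_\sigma^\btau$ per~\Cref{eq:Bd_perm_action}.

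Finally, I would compare with the codomain action: unpacking~\Cref{eq:Bd_H_action}, the $\bpi$-component of $\Phi_m(\bx) \cdot (\btau,\sigma)$ is precisely $(-1)^{m(|\btau|+\sgn(\sigma))}\Phi_m^{\cP,\bpi_\sigma^\btau}(\bx)$, matching the computation above, so $\Phi_m(\mapact{\bx}{\sigma}{\btau}) = \Phi_m(\bx)\cdot (\btau,\sigma)$. Assembling across all $m$ proves equivariance. The only genuine obstacle is bookkeeping: one must be careful that the factorization of $\rho_{\btau,\sigma}$ is in the order (rotation, then reflection) compatible with applying the two lemmas sequentially, and that the resulting composite permutation index agrees with the codomain action as defined, rather than its inverse or conjugate. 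Once this is sorted out, the proof is a direct combination of the two lemmas.
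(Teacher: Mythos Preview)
Your proposal is correct and is precisely the argument the paper intends: the proposition is stated without proof immediately after the two lemmas and the definition of the codomain action, so combining the reflection and rotation lemmas via the factorization $\rho_{\btau,\sigma}=\rho_{\bzero,\sigma}\circ\rho_{\btau,\id}$ and checking that $(\bpi^{\btau})_\sigma=\bpi_\sigma^{\btau}$ is exactly what is being left to the reader. Your bookkeeping is accurate.
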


\subsection{Action on the Codomain}
In this subsection, we consider the behavior of the mapping space signature under linear transformations of the codomain. In this section, we fix another finite-dimensional real vector space $W$. Given a linear transformation $A \in L(V,W)$, we obtain an induced map
\begin{align}
    A : \Jlipmap \rightarrow \JlipmapW,
\end{align}
where the induced map is well-defined on Jacobian equivalence classes via the chain rule and the fact that $A$ is a linear transformation and thus has a constant Jacobian.

Next, the linear transformation $A \in L(V,W)$ also induces a linear map
\begin{align*}
    \bA: \psphspace{\domdim}{}{V} \rightarrow \psphspace{\domdim}{}{W}.
\end{align*}
First, $A: V \rightarrow W$ extends to a linear map on the exterior product
\begin{align*}
    \Lambda^\domdim  A : \Lambda^\domdim  V \rightarrow \Lambda^\domdim  W.
\end{align*}
By taking tensor products of this transformation, and applying it component-wise in the product over permutations, we obtain the induced map on the degree $m$ component of the permutation tensor space,
\begin{align*}
    A^{(m)} \coloneqq \prod_{\bpi \in \Sigma_m^\domdim} \left(\Lambda^\domdim  A\right)^{\otimes m} : \phspace{\domdim}{(m)}{V} \rightarrow \phspace{\domdim}{(m)}{W} .
\end{align*}
Finally, we have
\begin{align}
    \bA \coloneqq (A^{(0)}, A^{(1)}, \ldots, A^{(m)}, \ldots) : \psphspace{\domdim}{}{V}  \rightarrow \psphspace{\domdim}{}{V} .
\end{align}

\begin{proposition}
\label{prop:cod_equivariance}
    Let $A \in L(V,W)$. Then, for any $\bx \in \Jlipmap$,
    \begin{align}
        \Phi(A\bx) = \bA \Phi(\bx).
    \end{align}
\end{proposition}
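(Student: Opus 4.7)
The plan is to reduce the claim to a single compatibility statement between the Jacobian minor operator and the induced map $\Lambda^\domdim A$, and then propagate this through the definition of $\Phi$ by linearity of the tensor product and of Lebesgue integration against a fixed measure on $\intdom{m}{\domdim}{\bpi}$.

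First I would verify the core lemma: for any $A \in L(V,W)$ and $\bx \in \lipmap$,
\[
\hat{d}(A\bx)(\bs) \;=\; (\Lambda^\domdim A)\,\hat{d}\bx(\bs) \qquad \text{for a.e.\ } \bs \in \square^\domdim.
\]
This follows from the chain rule, since $A$ is linear so that $d(A\bx)(\bs) = A \circ d\bx(\bs) \in L(\R^\domdim, W)$, combined with the functoriality of the exterior power: $\wedge^\domdim(A \circ d\bx(\bs)) = (\Lambda^\domdim A) \circ \wedge^\domdim d\bx(\bs)$. Under the identifications $L(\Lambda^\domdim\R^\domdim, \Lambda^\domdim V) \cong \Lambda^\domdim V$ and likewise for $W$, this reads exactly as displayed.

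Next I would plug this identity into the definition of the mapping space signature. For each level $m \ge 1$ and each permutation index $\bpi \in \Sigma_m^\domdim$,
\[
\Phi_m^{\bpi}(A\bx) = \int_{\intdom{m}{\domdim}{\bpi}} \hat{d}(A\bx)(\bt_1) \otimes \cdots \otimes \hat{d}(A\bx)(\bt_m)\,d\bt = \int_{\intdom{m}{\domdim}{\bpi}} \bigotimes_{i=1}^m (\Lambda^\domdim A)\,\hat{d}\bx(\bt_i)\,d\bt.
\]
Since $(\Lambda^\domdim A)^{\otimes m}$ is a fixed linear map from $(\Lambda^\domdim V)^{\otimes m}$ to $(\Lambda^\domdim W)^{\otimes m}$, independent of $\bt$, it commutes with the integral, giving
\[
\Phi_m^{\bpi}(A\bx) \;=\; (\Lambda^\domdim A)^{\otimes m}\,\Phi_m^{\bpi}(\bx).
\]
Taking the product over $\bpi \in \Sigma_m^\domdim$ and recalling the definition of $A^{(m)}$, this yields $\Phi_m(A\bx) = A^{(m)}\Phi_m(\bx)$ at each level. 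Assembling across all $m \ge 0$ (the $m=0$ component is just $1 \mapsto 1$) delivers $\Phi(A\bx) = \bA\,\Phi(\bx)$, as claimed.

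There is essentially no serious obstacle here: the proof is purely algebraic once the compatibility $\hat{d} \circ A = (\Lambda^\domdim A) \circ \hat{d}$ is noted. The only place requiring a small amount of care is justifying that $(\Lambda^\domdim A)^{\otimes m}$ passes through the integral, which is immediate from boundedness of $A$ (so that the integrand is $L^1$, as ensured by $\bx \in \lipmap$) and the fact that the integral is a Bochner (or componentwise Lebesgue) integral in a finite-dimensional space. One may also wish to remark that the definition is consistent on Jacobian equivalence classes, since both sides depend only on $\hat{d}\bx$, which was already observed to be well defined on $\Jlipmap$.
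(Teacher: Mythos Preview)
Your proposal is correct and follows essentially the same route as the paper's proof: both establish the key identity $\hat{d}(A\bx) = (\Lambda^\domdim A)\,\hat{d}\bx$ and then pull the constant linear map $(\Lambda^\domdim A)^{\otimes m}$ through the integral defining $\Phi_m^{\bpi}$. Your version is slightly more explicit in justifying the chain rule, the functoriality of $\Lambda^\domdim$, and the commutation with the integral, but the argument is the same.
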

\begin{proof}
    It suffices to show that for any $\bpi \in \Sigma_m^\domdim$ and $m \in \N$, that
    \begin{align*}
        \Phi_m^\bpi(A\bx) = (\Lambda^\domdim  A)^{\otimes m}\Phi_m^\bpi(\bx). 
    \end{align*}
    We will use the basis-independent formulation of the signature. Applying the Jacobian minor operator $\hat{d}$ to $A\bx$, we find that
    \begin{align*}
        \hat{d}(A\bx) &= (\Lambda^\domdim  A) \hat{d}(\bx).
    \end{align*}
    Then, we have
    \begin{align*}
        \Phi_m^\bpi(A\bx) &= \int_{\intdom{m}{\domdim}{\bpi}} \hat{d}(A\bx)(\bt_1) \otimes \ldots \otimes \hat{d}(A\bx)(\bt_m) \\
        & = \int_{\intdom{m}{\domdim}{\bpi}} \left(\Lambda^\domdim  A\right)^{\otimes m} \left(\hat{d}\bx(\bt_1) \otimes \ldots \otimes \hat{d}\bx(\bt_m)\right) \\
        & = (\Lambda^\domdim  A)^{\otimes m}\Phi_m^\bpi(\bx),
    \end{align*}
    as desired.
\end{proof}

\begin{remark}
    In particular, if we consider $A \in \GLV \subset L(V,V)$, the induced linear transformation $\bA: \psphspace{\domdim}{}{V} \rightarrow \psphspace{\domdim}{}{V}$ is invertible, by taking the induced transformation of $A^{-1}$, and thus $\bA \in \GLHV$. Thus,~\Cref{prop:cod_equivariance} shows that the mapping space signature is $\GLV$ equivariant with respect to the standard $\GLV$ action on $\Jlipmap$ and the induced $\GLV$ action on $\psphspace{\domdim}{}{V}$. 
\end{remark}

\section{Shuffle Product}
\label{sec:ms_shuffle}

In this section, we consider the intrinsic algebraic structure of the mapping space monomials, generalizes shuffle product of the path signature.

\begin{definition}
\label{def:shuffle_perm}
    Let $p$ and $q$ be non-negative integers. A \textit{$(p,q)$-shuffle} is a permutation $\sigma \in \Sigma_{p+q}$ such that
    \begin{align*}
        \sigma^{-1}(1) < \sigma^{-1}(2) < \ldots < \sigma^{-1}(p)
    \end{align*}
    and
    \begin{align*}
        \sigma^{-1}(p+1) < \sigma^{-1}(p+2) < \ldots < \sigma^{-1}(p+q).
    \end{align*}
    We denote by $\Sh(p,q) \label{eq:shuffle_def}$ the set of $(p,q)$-shuffles. 
\end{definition}

\begin{theorem}
\label{thm:ms_shuffle}
	Let $m_1, m_2 \in \N$. Let $(\cP_1, \bpi_1) \in \cO_{\domdim,\coddim}^{m_1} \times \Sigma_{m_1}^\domdim$ be a level $m_1$ index and $(\cP_2, \bpi_2) \in \cO_{\domdim,\coddim}^{m_2} \times \Sigma_{m_2}^\domdim$ be a level $m_2$ index, where
	\begin{align*}
		\begin{array}{ll}
		\bpi_1 = ( \pi_{1,1}, \ldots, \pi_{1,k}) \in \Sigma_{m_1}^\domdim, & \bpi_2 = (\pi_{2,1}, \ldots, \pi_{2,k}) \in \Sigma_{m_2}^\domdim \\
		\cP_1  = (P_1, \ldots, P_{m_1}), & \cP_2  = (P_{m_1+1}, \ldots, P_{m_1+m_2}),
		\end{array}
	\end{align*}
    Then, define a level $m_1+m_2$ index $(\cP, \bpi) \in \cO_{\domdim,\coddim}^{m_1+m_2} \times \Sigma_{m_1+m_2}^\domdim$ by
	\begin{align*}
	    \cP &= (P_1, \ldots, P_{m_1+m_2}) \\
		\bpi &= (\pi_1, \ldots, \pi_{\domdim}) \in \Sigma_{m_1+m_2}^\domdim,
	\end{align*}
	where $\pi_j \in \Sigma_{m_1+m_2}$ is defined by
	\begin{align*}
	    \pi_j(i) = \left\{
	    \begin{array}{ll}
	    \pi_{1,j}(i) & : 1 \leq i \leq m_1 \\
	    \pi_{2,j}(i - m_1) &: m_1 + 1 \leq i \leq m_1 + m_2
	    \end{array}\right.
	\end{align*}
	Then, for any $\bx \in \lipmap$,
	\begin{align}
		\Phi_{m}^{\cP_1,\bpi_1}(\bx) \Phi_{m}^{\cP_2,\bpi_2}(\bx)  = \sum_{\bsigma} \Phi_{m}^{\cP, \bsigma \bpi}(\bx),
	\end{align}
	where the sum is indexed over $\bsigma \Sh(m_1, m_2)^{\domdim}$ and $\bsigma \bpi$ denotes the component-wise group multiplication in $\Sigma_{m_1+m_2}^\domdim$.
	
\end{theorem}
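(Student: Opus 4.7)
The plan is to reduce the identity to a coordinate-wise decomposition of products of permuted simplices via a Fubini-style rearrangement followed by the classical shuffle decomposition applied in each of the $\domdim$ coordinate directions.

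First, I would combine the two factors into a single integral. By the definition of $\Phi^{\cP,\bpi}_m$, boundedness of the integrands, and Fubini's theorem,
\begin{align*}
\Phi_{m_1}^{\cP_1,\bpi_1}(\bx)\,\Phi_{m_2}^{\cP_2,\bpi_2}(\bx) = \int_{\intdom{m_1}{\domdim}{\bpi_1}\times\intdom{m_2}{\domdim}{\bpi_2}} \prod_{i=1}^{m_1+m_2} J[\bx_{P_i}](\bt_i)\, d\bt,
\end{align*}
where the parameters of the second integral are relabelled as $\bt_{m_1+1},\ldots,\bt_{m_1+m_2}$. Since $\intdom{m}{\domdim}{\bpi}=\Delta^m_{\pi_1}\times\cdots\times\Delta^m_{\pi_\domdim}$, the joint domain factorizes coordinate-wise as
\begin{align*}
\intdom{m_1}{\domdim}{\bpi_1}\times\intdom{m_2}{\domdim}{\bpi_2} = \prod_{j=1}^{\domdim}\left(\Delta^{m_1}_{\pi_{1,j}}\times\Delta^{m_2}_{\pi_{2,j}}\right),
\end{align*}
where the $j$-th factor involves only the coordinates $t_{1,j},\ldots,t_{m_1+m_2,j}$.

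Second, I would decompose each of these factors using shuffles. Fix $j\in[\domdim]$. A point in $\Delta^{m_1}_{\pi_{1,j}}\times\Delta^{m_2}_{\pi_{2,j}}$ satisfies two chains of strict inequalities
\begin{align*}
t_{\pi_j(1),j}<\cdots<t_{\pi_j(m_1),j} \quad\text{and}\quad t_{\pi_j(m_1+1),j}<\cdots<t_{\pi_j(m_1+m_2),j},
\end{align*}
where $\pi_j$ is the concatenated permutation from the statement. Almost every such point determines a unique total ordering of all $m_1+m_2$ coordinates extending both partial chains, and the possible interleavings are in bijection with $(m_1,m_2)$-shuffles $\sigma_j\in\Sh(m_1,m_2)$. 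Tracking which coordinate occupies which rank in the combined order gives, up to a measure-zero set,
\begin{align*}
\Delta^{m_1}_{\pi_{1,j}}\times\Delta^{m_2}_{\pi_{2,j}} = \bigsqcup_{\sigma_j\in\Sh(m_1,m_2)}\Delta^{m_1+m_2}_{\sigma_j\pi_j}.
\end{align*}
The cleanest way to verify this is to change variables $u_{k,j}=t_{\pi_j(k),j}$ to reduce to the standard product $\Delta^{m_1}\times\Delta^{m_2}=\bigsqcup_{\sigma_j}\Delta^{m_1+m_2}_{\sigma_j}$, apply the classical shuffle identity, then push the decomposition back through $\pi_j$.

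Third, assembling the pieces: taking the product over $j\in[\domdim]$ of the coordinate-wise decompositions yields
\begin{align*}
\intdom{m_1}{\domdim}{\bpi_1}\times\intdom{m_2}{\domdim}{\bpi_2} = \bigsqcup_{\bsigma\in\Sh(m_1,m_2)^\domdim}\intdom{m_1+m_2}{\domdim}{\bsigma\bpi}
\end{align*}
up to measure zero. Splitting the single integral along this disjoint union and recognizing each summand as the defining integral of $\Phi_{m_1+m_2}^{\cP,\bsigma\bpi}(\bx)$ delivers the claim.

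The only real work is in the third step above, namely the shuffle decomposition of permuted simplex products and the associated bookkeeping. The main obstacle is confirming that the concatenated permutation $\pi_j$ interacts with each shuffle $\sigma_j$ exactly in the manner stated; this amounts to tracking how the change of variables $u_{k,j}=t_{\pi_j(k),j}$ pulls back a standard permuted simplex $\Delta^{m_1+m_2}_{\sigma_j}$ to $\Delta^{m_1+m_2}_{\sigma_j\pi_j}$, which is a short but careful combinatorial check.
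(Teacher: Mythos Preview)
Your proposal is correct and follows essentially the same approach as the paper: combine the two monomials into a single integral over the product domain, factor that domain coordinate-wise as $\prod_{j}(\Delta^{m_1}_{\pi_{1,j}}\times\Delta^{m_2}_{\pi_{2,j}})$, apply the shuffle decomposition in each coordinate to obtain $\coprod_{\bsigma}\intdom{m_1+m_2}{\domdim}{\bsigma\bpi}$, and split the integral accordingly. Your additional remark about reducing the permuted-simplex decomposition to the standard one via the change of variables $u_{k,j}=t_{\pi_j(k),j}$ is a nice explicit justification of a step the paper simply asserts.
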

\begin{proof}
	Writing out the product of the two mapping space monomials, we have
	\begin{align}
	\label{eq:shuffle_integral}
		\Phi_{m}^{\cP_1, \bpi_1}(\bx) \Phi_{m}^{\cP_2, \bpi_2}(\bx) = \int_{\intdom{m_1}{\domdim}{\bpi_1} \times \intdom{m_2}{\domdim}{\bpi_2}} \prod_{i=1}^{m_1+m_2} J[\bx_{P_i}](\bt_i) d\bt,
	\end{align}
	where
	\begin{align*}
	    (t_{i,j})_{\substack{i=1, \ldots, m_1 \\ j = 1, \ldots, \domdim}} \in \intdom{m_1}{\domdim}{\bpi_1}, \quad \quad (t_{i,j})_{\substack{i=m_1+1, \ldots, m_1+m_2 \\ j = 1, \ldots, \domdim}} \in \intdom{m_2}{\domdim}{\bpi_2}.
	\end{align*}
	The domain of integration is
	\begin{align*}
		\intdom{m_1}{\domdim}{\bpi_1} \times \intdom{m_2}{\domdim}{\bpi_2} = \left( \prod_{j=1}^{\domdim} \Delta^{m_1}_{\pi_{1,j}}\right) \times \left( \prod_{j=1}^{\domdim} \Delta^{m_2}_{\pi_{2,j}}\right)  = \prod_{j=1}^{\domdim} \left( \Delta^{m_1}_{\pi_{1,j}} \times \Delta^{m_2}_{\pi_{2,j}}\right).
	\end{align*}
	Then, using the shuffle decomposition of $\Delta^{m_1}_{\pi_{1,j}} \times \Delta^{m_2}_{\pi_{2,j}}$ into the disjoint union
	\begin{align*}
		\Delta^{m_1}_{\pi_{1,j}} \times \Delta^{m_2}_{\pi_{2,j}} = \coprod_{\sigma_j \in \Sh(m_1, m_2)} \Delta^{m_1 + m_2}_{\sigma_j \pi_j},
	\end{align*}
	and letting $\bsigma = (\sigma_1, \ldots, \sigma_\domdim) \in \Sh(m_1,m_2)^\domdim$, we get
	\begin{align*}
		\intdom{m_1}{\domdim}{\bpi_1} \times \intdom{m_2}{\domdim}{\bpi_2}  = \coprod_{\bsigma} \prod_{j=1}^{\domdim} \Delta^{m_1 + m_2}_{\sigma_j \pi_j} = \coprod_{\bsigma} \intdom{m_1+m_2}{\domdim}{\bsigma\bpi}.
	\end{align*}
	This decomposition of $\intdom{m_1}{\domdim}{\bpi_1} \times \intdom{m_2}{\domdim}{\bpi_2}$ allows us to rewrite the integral in~\Cref{eq:shuffle_integral} as a sum of integrals over the domain $\intdom{m_1+m_2}{\domdim}{\bsigma\bpi}$,
	\begin{align*}
		\Phi_{m}^{ \cP_1, \bpi_1}(\bx) \Phi_{m}^{\cP_2,\bpi_2}(\bx) &= \sum_{\bsigma} \int_{\intdom{m_1+m_2}{\domdim}{\bsigma\bpi}}\prod_{i=1}^{m_1+m_2} J[\bx_{P_i}](\bt_i) d\bt = \sum_{\bsigma} \Phi_{m}^{\cP, \bsigma \bpi}(\bx).
	\end{align*}
\end{proof}

This shuffle product for the mapping space signature shows that the space of linear combinations of mapping space monomials is a subalgebra of $C(\Jlipmap, \R)$.

\begin{corollary}
\label{cor:shuffle_subalgebra}
    Let $\bx \in \Jlipmap$ and $\ell, \ell' \in \phspace{d}{}{V}$. Then there exists some $\ell'' \in \phspace{d}{}{V}$ such that
    \begin{align}
        \ip{\ell, \Phi(\bx)} \cdot \ip{\ell', \Phi(\bx)} = \ip{\ell'', \Phi(\bx)}.
    \end{align}
\end{corollary}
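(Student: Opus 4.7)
The plan is to reduce the statement directly to the shuffle identity of Theorem \ref{thm:ms_shuffle} via bilinearity. First I would observe that both sides of the claimed identity are bilinear in $(\ell, \ell')$: the left-hand side is the product of two linear pairings, while the right-hand side is linear in any choice of $\ell''$. Moreover, $\phspace{\domdim}{}{V}$ is the algebraic direct sum $\bigoplus_{m \geq 0} \pvspace{\domdim}{(m)}{V}$, so every $\ell \in \phspace{\domdim}{}{V}$ is a \emph{finite} linear combination of basis elements $e^{\cP, \bpi}$ indexed by $(\cP, \bpi) \in \cO_{\domdim,\coddim}^m \times \Sigma_m^{\domdim}$. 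Consequently, it suffices to produce $\ell''$ in the case where $\ell = e^{\cP_1, \bpi_1}$ and $\ell' = e^{\cP_2, \bpi_2}$ are single basis elements living at levels $m_1$ and $m_2$ respectively, and then extend by bilinearity.

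For this reduced case, by construction $\ip{e^{\cP_1, \bpi_1}, \Phi(\bx)} = \Phi_{m_1}^{\cP_1, \bpi_1}(\bx)$ and similarly for $\ell'$, so Theorem \ref{thm:ms_shuffle} yields
\begin{align*}
\ip{\ell, \Phi(\bx)} \cdot \ip{\ell', \Phi(\bx)} = \sum_{\bsigma \in \Sh(m_1, m_2)^{\domdim}} \Phi_{m_1+m_2}^{\cP, \bsigma \bpi}(\bx),
\end{align*}
where $(\cP, \bpi) \in \cO_{\domdim,\coddim}^{m_1+m_2} \times \Sigma_{m_1+m_2}^{\domdim}$ is the concatenated level $m_1+m_2$ index assembled in the theorem. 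Setting
\begin{align*}
\ell'' \coloneqq \sum_{\bsigma \in \Sh(m_1, m_2)^{\domdim}} e^{\cP, \bsigma \bpi} \in \pvspace{\domdim}{(m_1+m_2)}{V} \subset \phspace{\domdim}{}{V}
\end{align*}
realises the right-hand side as $\ip{\ell'', \Phi(\bx)}$, since the sum is finite (the shuffle set $\Sh(m_1,m_2)^{\domdim}$ is finite). Extending bilinearly, for general $\ell = \sum_a c_a\, e^{\cP_1^a, \bpi_1^a}$ and $\ell' = \sum_b c'_b\, e^{\cP_2^b, \bpi_2^b}$, the corresponding $\ell''$ is the finite linear combination $\sum_{a,b} c_a c'_b \ell''_{a,b}$ of the basic shuffled elements.

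The only point that requires attention is to check that this $\ell''$ actually lies in the direct sum $\phspace{\domdim}{}{V}$ rather than merely in the larger product space $\psphspace{\domdim}{}{V}$. This is automatic: each basic shuffle produces an element supported in the single graded piece of degree $m_1 + m_2$, and since $\ell$ and $\ell'$ are finite combinations supported in finitely many grades, $\ell''$ is supported in finitely many grades as well. There is no substantive obstacle; the corollary is essentially a packaging of Theorem \ref{thm:ms_shuffle} into the language of pairings on the permutation Hilbert space, and the definition of $\ell''$ can be read off as an explicit \emph{shuffle product} operation $\ell \shuffle \ell'$ on $\phspace{\domdim}{}{V}$.
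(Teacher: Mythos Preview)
Your proposal is correct and matches the paper's approach: the paper states the corollary without proof, treating it as an immediate consequence of Theorem~\ref{thm:ms_shuffle}, and your argument simply spells out the obvious bilinear extension from basis elements that the paper leaves implicit.
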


This product structure will used to show the universal and characteristic properties of the mapping space signature in~\Cref{sec:univ_char}.

\section{Composition of Maps}
\label{sec:ms_composition}

In this section, we study a possible generalization of Chen's identity to higher dimensions. One of the primary reasons for using cubical domains (rather than simplicial domains) as the higher dimensional generalization of the interval is the existence of natural concatenation operations. 

\begin{definition}
	Let $\bx, \by \in \lipmap$ and let $j \in [\domdim]$. If
	\begin{align}
	\label{eq:k_composable}
		\bx(s_1, \ldots, s_{j-1}, 1, s_{j+1}, \ldots, s_{\domdim}) = \by(s_1, \ldots, s_{j-1}, 0, s_{j+1}, \ldots, s_{\domdim})
	\end{align}
	for all $(s_1, \ldots, s_{j-1}, s_{j+1}, \ldots, s_{\domdim}) \in \square^{\domdim-1}$, then we say that $\bx$ and $\by$ are \emph{$j$-composable}. For $j$-composable maps $\bx, \by \in \lipmap$, the \emph{$j$-composition} $\bx *_j \by \in \lipmap$ is defined as
	\begin{align}
	\label{eq:ms_composition_formula}
		\bx *_j \by (s_1, \ldots, s_{\domdim}) \coloneqq \left\{
			\begin{array}{cl}
				\bx(s_1, \ldots, 2s_j, \ldots, s_{\domdim}) & : s_j \in [0, 1/2) \\
				\by(s_1, \ldots, 2s_j -1, \ldots, s_{\domdim}) &: s_j \in [1/2, 1]
			\end{array}
			\right.
	\end{align} 
\end{definition}

The $j$-composable condition ensures that the $j$-composition is continuous, and the resulting map is still Lipschitz. Furthermore, composition is well-defined on Jacobian equivalence classes since for any $j$-composable $\bx, \by \in \lipmap$, we have
\begin{align}
    J[\bx *_j \by] = J[\bx] *_j J[\by],
\end{align}
where the composition on the right is defined for $J[\bx], J[\by] \in \Linfmap$ in the same manner as~\Cref{eq:ms_composition_formula}, where we do not require the $j$-composability condition since they are not required to be continuous.

\begin{definition}
    Let $[\bx], [\by] \in \Jlipmap$ and let $j \in [\domdim]$. The equivalence classes $[\bx]$ and $[\by]$ are \emph{$j$-composable} if there exists a representative $\bx, \by$ of each which are $j$-composable as Lipschitz maps, satisfying~\Cref{eq:k_composable}. For $j$-composable equivalence classes $[\bx], [\by] \in \Jlipmap$, the \emph{$j$-composition} is the equivalence class of
    \begin{align*}
        [\bx] *_j [\by] \coloneqq [\bx *_j \by],
    \end{align*}
    where we use the $j$-composable representatives $\bx, \by$ on the right side. 
\end{definition}

Recall that Jacobian equivalence of paths corresponds to translation equivalence in the case of paths, and thus, two Jacobian equivalence classes of paths are always composable. Furthermore, the path signature is multiplicative with respect to path composition~\cite{chen_iterated_1954}; given $\bx, \by \in \Jlippath$, the multiplicative formula
\begin{align}
\label{eq:chens_identity_paths}
    S(\bx * \by) = S(\bx) \otimes S(\by)
\end{align}
holds, and is called \emph{Chen's identity}. In terms of path space monomials, suppose $P = (p_1, \ldots, p_m)\in [\coddim]^m \cong \cO_{1,\coddim}^m$, then we can write Chen's identity as
\begin{align*}
    S_m^P(\bx * \by) = S_m^P(\bx) + \sum_{i=1}^{m-1} S_{i}^{(p_1, \ldots, p_{i})}(\bx) \cdot S_{m-i}^{(p_{i+1}, \ldots, p_m)}(\by) + S^p_m(\by).
\end{align*}
Thus, the path space monomials of a composite path can be computed using the monomials of each of its constituent parts, a key property which leads to the definition of rough paths. While there is an obstruction to a direct generalization of Chen's identity to higher dimension, there is a variant which can be formulated for the identity mapping space signature (\Cref{ssec:identity_signature}). To begin, we will need the signature restricted to a subcube (\Cref{def:subcube}).

\begin{definition}
\label{def:mapping_space_subdomain}
    Suppose $(\ba, \bb) \in (\Delta^2)^\domdim$. Let $m \in \N$, suppose $(\cP, \bpi) \in \cO_{\domdim,\coddim}^m \times \Sigma_m^\domdim$ is a level $m$ index. Given $\bx \in \lipmap$, the \emph{mapping space monomial of $\bx$ with respect to $(\cP, \bpi)$ in the region $(\ba, \bb)$} is
    \begin{align}
        \Phi_m^{\cP, \bpi}(\bx)_{\ba, \bb} \coloneqq \int_{\intdom{m}{\domdim}{\bpi}(\ba,\bb)} \hat{d}^{P_1}x(\bt_1) \swedge \cdots \swedge \hat{d}^{P_m}x(\bt_m),
    \end{align}
    where $\intdom{m}{\domdim}{\bpi}(\ba,\bb)$ is given in~\Cref{def:integration_domain}.
\end{definition}

We illustrate the obstruction and the modification of Chen's identity with an example.

\begin{example}
We will consider $d=2$ for maps from the domain unit square, and level $m=2$. Suppose $\bx, \by \in \Lip(\square^2, V)$ are $1$-composable, and let $\bz = \bx *_1 \by$ be their $1$-composition. Let $(\cP, \bid) \in \cO_{2,\coddim}^2 \times \Sigma_2^2$, where $\cP = (P_1, P_2)$ and the permutation index $\bid \in \Sigma_2^2$ is the identity. Then,
\begin{align*}
	\Phi_2^{\cP, \bid}(\bz) = \int_{\intdom{2}{2}{\bid}} J[\bz_{P_1}](\bt_1) J[\bz_{P_2}](\bt_2) d\bt.
\end{align*}
In order to express the integral in terms of $\bx$ and $\by$, we decompose the first $\Delta^2$ factor in $\intdom{m}{\domdim}{\bid}$ as
\begin{align*}
	\Delta^2 = \Delta^2\left(0, 1/2\right) \cup \left( \left[0, 1/2\right] \times \left[1/2, 1\right]\right) \cup \Delta^2\left(1/2, 1\right).
\end{align*}
Using the definition of the $1$-composition in~\Cref{eq:ms_composition_formula}, the monomial above can be expressed as
\begin{align*}
	\Phi_2^{\cP, \bid}(\bz) &= \int_{\Delta^2 \times \Delta^2}J[\bx_{P_1}](\bt_1) J[\bx_{P_2}](\bt_2) d\bt  + \int_{\square^2 \times \Delta^2} J[\bx_{P_1}](\bt_1) J[\by_{P_2}](\bt_2) d\bt  \\
	&\hspace{20pt}+ \int_{\Delta^2 \times \Delta^2} J[\by_{P_1}](\bt_1) J[\by_{P_2}](\bt_2)  d\bt
\end{align*}

Note that the parametrization of $\square^2 \times \Delta^2$ in the second integral follows the same convention as the $\intdom{2}{2}{\bid}$ domain; namely we have
\begin{align}
    (t_{1,1}, t_{2,1}) \in \square^2, \quad(t_{1,2} < t_{2,2}) \in \Delta^2,\quad \bt_1 = (t_{1,1}, t_{1,2}), \quad\bt_2 = (t_{2,1}, t_{2,2}).
\end{align}

We can see that the first and third integrals in this decomposition correspond to monomials for $\bx$ and $\by$, and thus we have
\begin{align}
\label{eq:ms_composition_integral_form}
	\Phi_{2}^{\cP,\bid}(\bz) = \Phi_{2}^{\cP,\bid}(\bx) + \int_{\square^2 \times \Delta^2} J[\bx_{P_1}](t_{1,1}, t_{1,2}) J[\by_{P_2}](t_{2,1}, t_{2,2}) d\bt + \Phi_{2}^{\cP,\bid}(\by).
\end{align}
The obstruction to a formula such as Chen's identity is the remaining integral in the above expression. Due to the nontrivial relationship $t_{1,2} < t_{2,2}$ in the domain of integration, we cannot express this integral as a product of a monomial of $\bx$ and a monomial of $\by$, as is done in the case of path space monomials. However, we can express it as the \emph{integral} of a product of monomials. By expressing the domain of integration explicitly, the integral can be written as
\begin{align}
\int_{\square^2 \times \Delta^2} &J[\bx_{P_1}](t_{1,1}, t_{1,2}) J[\by_{P_2}](t_{2,1}, t_{2,2}) d\bt \label{eq:ms_composition_obstruction}\\
 & = \int_{t_{1,2}=0}^1 \left(\int_{t_{1,1}=0}^{1} J[\bx_{P_1}](t_{1,1}, t_{1,2}) dt_{1,1}\right) \left( \int_{t_{2,1}=0}^1 \int_{t_{2,2}=t_{1,2}}^1 J[\by_{P_2}](\bt_2) d\bt_2\right) dt_{1,2}. \nonumber
\end{align}
This integral can be expressed in terms of mapping space monomials of $\bx$ and $\by$ restricted to subdomains, defined in~\Cref{def:mapping_space_subdomain}. In particular, consider the derivative
\begin{align*}
    \frac{\partial}{\partial s} \Phi_{2}^{P_1, \id}(\bx)_{\bzero, (0,s)} &= \frac{\partial}{\partial s} \int_{t_1=0}^1 \int_{t_2=0}^{s} J[\bx_{P_1}](t_1, t_2) dt_2 dt_1 = \int_0^1 J[\bx_{P_1}](t_1, s) dt_1.
\end{align*}
Then, we can express the integral in~\Cref{eq:ms_composition_obstruction} as
\begin{align*}
    \int_{\square^2 \times \Delta^2} &J[\bx_{P_1}](\bt_1) J[\by_{P_2}](\bt_2) d\bt = \int_{0}^1 \frac{\partial}{\partial s} \Phi_{1}^{P_1, \id}(\bx)_{\bzero, (1,s)} \cdot \Phi_{1}^{P_2, \id}(\by)_{(0, s), \bone} \, ds.
\end{align*}
Thus, using~\Cref{eq:ms_composition_integral_form}, the monomial of the composition $\bx *_1 \by$ is
\begin{align*}
	\Phi^{\cP, \bid}_2(\bx *_1 \by) = \Phi_2^{\cP, \bid}(\bx) + \int_{0}^1 \frac{\partial}{\partial s} \Phi_{1}^{P_1, \id}(\bx)_{\bzero, (1,s)} \cdot \Phi_{1}^{P_2, \id}(\by)_{(0, s), \bone} ds + \Phi_{2}^{\cP,\bid}(\by).
\end{align*}
\end{example}

This composition identity can be established for any mapping space monomial where the permutation index is the identity $\bid \in \Sigma_m^{\domdim}$.

\begin{theorem}
\label{thm:composition}
	Let $\bx, \by \in \lipmap$ such that $\bx$ and $\by$ are $q$-composable for some $q \in [\domdim]$. Let $(\cP, \bid) \in \cO_{\domdim,\coddim}^m \times \Sigma_m^\domdim$, where $\cP = (P_1, \ldots, P_m)$, and let
	\begin{align*}
		\overline{\cP}_r = (P_1, \ldots, P_r), \quad \underline{\cP}_r = (P_{m-r+1}, \ldots, P_m)
	\end{align*}
	denote the first $r$ and last $r$ elements of $\cP$ respectively. Then, we will use
	\begin{align*}
		\hat{\bt}_q = (t_1, \ldots, t_{q-1}, t_{q+1}, \ldots, t_\domdim) \in \square^{\domdim-1}
	\end{align*}
	to denote an element of $\square^{d-1}$ with the $t_q$ coordinate omitted. Let
	\begin{align*}
		\hat{\bt}_q^0 & = (t_1, \ldots, t_{q-1}, 0, t_{q+1}, \ldots, t_\domdim)  \in \square^{\domdim}\\
		\hat{\bt}_q^1 & = (t_1, \ldots, t_{q-1}, 1, t_{q+1}, \ldots, t_\domdim) \in \square^{\domdim},
	\end{align*}
	denote the the corresponding coordinates on the $0$ or $1$ face of $\square^{\domdim}$ in the $q$-direction. Finally, let
	\begin{align*}
		\frac{\partial}{\partial \hat{\bt}_q} \coloneqq \frac{\partial}{\partial t_1} \ldots \frac{\partial}{\partial t_{q-1}} \frac{\partial}{\partial t_{q+1}} \ldots \frac{\partial}{\partial t_\domdim}.
	\end{align*}
	Then, we have
	\begin{align}
	\label{eq:mapping_signature_composition_formula}
		\Phi_{m}^{\cP,\bid}(\bx *_q \by) &= \Phi_{m}^{\cP,\bid}(\bx) + \Phi_{m}^{\cP,\bid}(\by) + \sum_{r=1}^{m-1} \int_{\hat{\bt}_q \in \square^{\domdim-1}} \frac{\partial}{\partial \hat{\bt}_q} \Phi_{r}^{ \overline{\cP}_r, \bid}(\bx)_{\bzero, \hat{\bt}_q^1} \cdot \Phi_{m-r}^{ \underline{\cP}_{m-r}, \bid}(\by)_{\hat{\bt}_q^0, \bone} \, d\hat{\bt}_q. 
	\end{align}
\end{theorem}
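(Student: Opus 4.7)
The plan is to generalize the $d=2, m=2$ computation illustrated just above the theorem, splitting the integration domain in the $q$-direction at $1/2$ and recognizing the mixed pieces via a Leibniz-rule analysis in the remaining $\domdim - 1$ directions.

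First, I would apply the chain rule to the composition: since $\bx *_q \by$ rescales only the $s_q$ coordinate by a factor of $2$, only the $q$-th column of the Jacobian matrix acquires this factor, and hence for each $i \in [m]$,
$$J[(\bx *_q \by)_{P_i}](\bs) \;=\; \begin{cases} 2\, J[\bx_{P_i}](s_1, \ldots, 2 s_q, \ldots, s_\domdim), & s_q < 1/2, \\ 2\, J[\by_{P_i}](s_1, \ldots, 2 s_q - 1, \ldots, s_\domdim), & s_q \ge 1/2. \end{cases}$$
Next, I would decompose the $q$-th simplex factor $\Delta^m$ of $\intdom{m}{\domdim}{\bid}$ into the disjoint union $\coprod_{r=0}^m R_r$, where $R_r = \{t_{1,q} < \ldots < t_{r,q} < 1/2 \le t_{r+1,q} < \ldots < t_{m,q}\}$. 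On each $R_r$, the change of variables $u_{i,q} = 2 t_{i,q}$ for $i \le r$ and $u_{i,q} = 2 t_{i,q} - 1$ for $i > r$ contributes a Jacobian of $(1/2)^m$ that cancels the product of $2$'s from the column rescalings. The extreme cases $r=0$ and $r=m$ immediately yield the terms $\Phi_m^{\cP,\bid}(\by)$ and $\Phi_m^{\cP,\bid}(\bx)$ respectively.

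The substance lies in the intermediate indices $1 \le r \le m-1$. After the change of variables, the $R_r$ contribution is an integral with integrand $\prod_{i=1}^r J[\bx_{P_i}](\bu_i) \prod_{i=r+1}^m J[\by_{P_i}](\bu_i)$ over a domain $\Omega_r$ which equals $\Delta^r \times \Delta^{m-r}$ in the $q$-direction (two \emph{independent} simplices) and the full ordered $\Delta^m$ in every other direction $j \ne q$. For each such $j$, I would introduce the interface variable $t_j := u_{r,j} \in [0,1]$; the constraint $u_{1,j} < \ldots < u_{m,j}$ then separates as $u_{1,j} < \ldots < u_{r-1,j} < t_j < u_{r+1,j} < \ldots < u_{m,j}$, decoupling the $\bx$-indexed coordinates (those with $i \le r$, with the $r$-th pinned at $t_j$) from the $\by$-indexed coordinates (those with $i > r$). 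Collecting $(t_j)_{j \ne q}$ as $\hat{\bt}_q \in \square^{\domdim-1}$ and applying Fubini, the $R_r$-contribution splits cleanly into an $\bx$-block and a $\by$-block glued along $\hat{\bt}_q$.

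The $\by$-block is manifestly $\Phi_{m-r}^{\underline{\cP}_{m-r},\bid}(\by)_{\hat{\bt}_q^0, \bone}$, since its integration domain is exactly $\intdom{m-r}{\domdim}{\bid}(\hat{\bt}_q^0, \bone)$. For the $\bx$-block, I would verify by iterated application of the Leibniz rule, one direction $j \ne q$ at a time, that differentiating $\Phi_r^{\overline{\cP}_r,\bid}(\bx)_{\bzero, \hat{\bt}_q^1}$ in $t_j$ converts the factor $\Delta^r([0,t_j])$ of the domain into $\Delta^{r-1}([0,t_j])$ with the $r$-th coordinate pinned at $u_{r,j} = t_j$; iterating over all $j \ne q$ produces exactly the $\bx$-block. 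Summing over $r$ then yields the claimed formula. The main obstacle is the multi-index bookkeeping for the intermediate terms: one must simultaneously handle the $q$-directional change of variables, the interface-splittings in each of the $\domdim - 1$ remaining directions, and verify that the mixed derivative $\partial / \partial \hat{\bt}_q$ converts ordered simplicial constraints into boundary-pinned ones in precisely the right way. Since $\bx, \by \in \lipmap$, the Jacobian minors lie in $L^\infty$, so Fubini and all interchanges of integration with differentiation are freely justified.
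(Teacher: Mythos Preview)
Your proposal is correct and follows essentially the same route as the paper's own proof: decompose the $q$-th simplex factor of $\intdom{m}{\domdim}{\bid}$ at the midpoint, rescale away the factors of $2$, and for each mixed piece introduce the interface variables $\hat{\bt}_q = (u_{r,j})_{j \ne q}$ to split the integral into an $\bx$-block and a $\by$-block, identifying the former with $\frac{\partial}{\partial \hat{\bt}_q} \Phi_r^{\overline{\cP}_r,\bid}(\bx)_{\bzero, \hat{\bt}_q^1}$ via the fundamental theorem of calculus applied once per direction $j \ne q$. The only cosmetic differences are that you spell out the chain-rule scaling by $2$ explicitly and add the $L^\infty$ justification for the interchanges, both of which the paper leaves implicit.
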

\begin{proof}
	We begin with the following decomposition of the standard $m$-simplex
	\begin{align}
	\label{eq:simplex_decomp_chen}
		\Delta^m = \coprod_{r=0}^m \Delta^r(0, 1/2) \times \Delta^{m-r}(1/2, 1).
	\end{align}
	The definition of $\Phi_{m}^{\cP, \bid}(\bx *_q \by)$ is an integral over $\intdom{m}{\domdim}{} = \prod_{j=1}^{\domdim} \Delta^m$ and we apply the decomposition in~\Cref{eq:simplex_decomp_chen} to the $q^{th}$ $\Delta^m$ in $\intdom{m}{\domdim}{}$. By the definition of $q$-composition in~\Cref{eq:ms_composition_formula}, this allows us to express the Jacobians in the integrand in terms of $\bx$ and $\by$ individually, after a change of variables which takes
	\begin{align*}
		\Delta^r(0, 1/2) \times \Delta^{m-r}(1/2, 1) \mapsto \Delta^r \times \Delta^{m-r}.
	\end{align*}
	We will use
	\begin{align}
		\cintdom{m}{\domdim}{q}{r} \coloneqq \left(\Delta^m\right)^{q-1}\times (\Delta^r \times \Delta^{m-r}) \times \left(\Delta^m\right)^{d-q},
	\end{align}
	to denote the resulting domain of integration. We will continue to use $\bt = (t_{i,j})_{i \in [m], j \in [\domdim]}$ as the coordinates of $\cintdom{m}{\domdim}{q}{r}$. In particular, for $\bt \in \cintdom{m}{\domdim}{q}{r}$, we have
	\begin{align*}
	    0 \leq t_{1,q} < \ldots < t_{r,q} \leq 1, \quad \quad 0 \leq t_{r+1, q} &< \ldots < t_{m, q} \leq 1.
	\end{align*}

	Then, applying the decomposition and the change of variables to the monomial of the composition, we have
	\begin{align}
		\Phi_{m}^{\cP, \bid}(\bx *_q \by) &= \Phi_{m}^{\cP,\bid}(\bx) + \Phi_{m}^{\cP,\bid}(\by) + \sum_{r=1}^{m-1} \int_{\cintdom{m}{\domdim}{q}{r}} \prod_{i=1}^r J[\bx_{P_i}](\bt_i) \cdot \prod_{i=r+1}^m J[\by_{P_i}](\bt_i) d\bt. \label{eq:ms_composition_first_decomposition}
	\end{align}
	The integrals in the summand can be rewritten by changing the order of integration. In particular, we integrate the $t_{r,1}, \ldots, t_{r,q-1}, t_{r, q+1}, \ldots, t_{r,\domdim}$ at the end. In the following decomposition, given $(\ba, \bb) \in (\Delta^2)^\domdim$, we let
	\begin{align*}
		\ocintdom{m}{\domdim}{q}(\ba, \bb) \coloneqq \prod_{j=1}^{q-1} \Delta^{m-1}(a_j, b_j) \times \Delta^m(a_q, b_q) \times \prod_{j=q+1}^{\domdim} \Delta^{m-1}(a_j, b_j).
	\end{align*}
	In order to preserve the labelling of the coordinates, we let
	\begin{align*}
		\hat{\bt}_q = (t_{r,1}, \ldots, t_{r,q-1}, t_{r,q+1}, \ldots, t_{r,\domdim})
	\end{align*}
	in the following expressions. By changing the order of integration, the integrals in the summand in~\Cref{eq:ms_composition_first_decomposition} can be expressed as
	\begin{align*}
		\int_{\cintdom{m}{\domdim}{q}{r}} &\prod_{i=1}^r J[\bx_{P_i}](\bt_i) \cdot \prod_{i=r+1}^m J[\by_{P_i}](\bt_i) d\bt \\
		& = \int_{\hat{\bt}_q \in \square^{\domdim-1}} \left(\int_{\ocintdom{r}{\domdim}{q}(\bzero, \hat{\bt}_q^1)} \prod_{i=1}^r J[\bx_{P_i}](\bt_i) d\bt\right) \cdot \left(\int_{\intdom{m-r}{\domdim}{}(\hat{\bt}_q^0, \bone)} \prod_{i={r+1}}^m J[\by_{P_i}](\bt_i) d\bt\right) d\hat{\bt}_q.
	\end{align*}
	By definition, the second integral in the integrand is
	\begin{align*}
		\Phi_{m-r}^{\underline{\cP}_{m-r}, \bid}(\by)_{\hat{\bt}_q^0, \bone} = \int_{\intdom{m-r}{\domdim}{}(\hat{\bt}_q^0, \bone)} \prod_{i={r+1}}^m J[\by_{P_i}](\bt_i) d\bt.
	\end{align*}
	Furthermore, it can be checked by iterated application of the fundamental theorem of calculus that
	\begin{align*}
		\frac{\partial}{\partial \hat{\bt}_q} \Phi_{r}^{ \overline{\cP}_r, \bid}(\bx)_{\bzero, \hat{\bt}_q^1} & = \frac{\partial}{\partial \hat{\bt}_q} \int_{\intdom{r}{k}{}(\bzero, \hat{\bt}_q^1)} \prod_{i=1}^r J[\bx_{P_i}](\bt_i) d\bt = \int_{\ocintdom{r}{\domdim}{q}(\bzero, \hat{\bt}_q^1)}\prod_{i=1}^r J[\bx_{P_i}](\bt_i) d\bt,
	\end{align*}
	which is the first integral in the integral. Thus, we can express  
	\begin{align*}
		\int_{\cintdom{m}{\domdim}{q}{r}} &\prod_{i=1}^r J[\bx_{P_i}](\bt_i) \cdot \prod_{i=r+1}^m J[\by_{P_i}](\bt_i) d\bt = \int_{\hat{\bt}_q \in\square^{d-1}} \frac{\partial}{\partial \hat{\bt}_q} \Phi_{r}^{ \overline{\cP}_r, \bid}(\bx)_{\bzero, \hat{\bt}_q^1} \cdot \Phi_{m-r}^{ \underline{\cP}_{m-r}, \bid}(\by)_{\hat{\bt}_q^0, \bone} \, d\hat{\bt}_q.
	\end{align*}
	Finally, substituting this integral back into~\Cref{eq:ms_composition_first_decomposition}, we obtain the desired composition formula. 
\end{proof}

\section{Parametrized Mapping Space Signature}
\label{sec:ms_parametrized_injectivity}

In this section, we define a parametrized variant of the mapping space signature and show that it is injective and continuous on $\Jlipmap$. Let 
\begin{align}
\label{eq:ov_def}
    \oV_\domdim \coloneqq \R^\domdim \times V.
\end{align}
We begin with the inclusion
\begin{align}
\label{eq:parametrized_lift}
	\iota: \lipmap & \rightarrow \Lip(\square^{\domdim}, \oV_\domdim) \\
	\bx(\bs) & \mapsto (\bs, \bx(\bs)). \nonumber
\end{align}
To simplify notation, we will denote $\obx \coloneqq \iota(\bx)$ for $\bx \in \lipmap$. Note that this inclusion map is not well-defined for Jacobian equivalence classes, which is further discussed in~\Cref{ssec:parametrized_continuity}. For the injectivity theorem on $\Jlipmap$, we will use a restriction of the mapping space signature for $\olipmap$, computed on $\obx$. In particular, we restrict the possible elements of the forms index by using a subset $W \subset \cO_{\domdim,\domdim+\coddim}$.

In order to describe the subset $W$, suppose $(e_{u,1}, \ldots, e_{u,\domdim})$ is an orthonormal basis for $\R^\domdim$ and $(e_{v,1}, \ldots, e_{v,\coddim})$ is an orthonormal basis for $V$. We will view $\cO_{\domdim, \coddim} \subset \cO_{\domdim, \domdim+\coddim}$ by letting $P \in \cO_{\domdim,\coddim}$ denote the orthonormal basis element
\begin{align}
    e^P \coloneqq e_{v, P(1)} \swedge \ldots \swedge e_{v, P(\domdim)} \in \Lambda^\domdim \oV_d.
\end{align}
Furthermore, we let $U \in \cO_{\domdim, \domdim+\coddim}$ denote the basis element
\begin{align}
\label{eq:U_basis}
    e^U \coloneqq e_{u,1} \swedge \ldots \swedge e_{u,\domdim}.
\end{align}
Finally, we let
\begin{align*}
    W = \{U\} \cup \cO_{\domdim,\coddim} \subset \cO_{\domdim, \domdim+\coddim}
\end{align*}
The Jacobian minor of $\obx$ with respect to $U$ only has information about the parametrization, and the Jacobian minor of $\obx$ with respect to some $P \in \cO_{\domdim,\coddim}$ remains the same as for $\bx \in \lipmap$. In other words, we have
\begin{align}
\label{eq:jacobiansW}
    J[\obx_U](\bs) = 1, \quad \quad J[\obx_P](\bs) = J[\bx_P](\bs).
\end{align}

\begin{definition}
The \emph{mapping space signature restricted to $W$} is defined by
\begin{align}
\label{eq:signatureW_def}
    \Phi_W &\coloneqq \left(\Phi_{W,m}\right)_{m\geq 0}: \olipmap \rightarrow \psphspace{\domdim}{}{\oV_d}\\
    \Phi_{W,m} &\coloneqq \left(\Phi_{m}^{\cP, \bpi}\right)_{\cP \in W^m, \bpi \in \Sigma_m^\domdim}: \olipmap \rightarrow \pvspace{\domdim}{(m)}{\oV_d}.
\end{align}
\end{definition}

In this definition, all the coordinates in $\psphspace{\domdim}{}{\oV_d}$ for basis elements $e_m^{\cP, \bpi}$ with $\cP \in \cO^m_{\domdim,\domdim+\coddim} - W$ are set to $0$

\begin{definition}
The \emph{parametrized mapping space signature} is defined to be the composition of $\Phi_W$ with the inclusion in~\Cref{eq:parametrized_lift},
\begin{align}
\label{eq:parametrized_signature_def}
    \oPhi \coloneqq \Phi_W \circ \iota : \Jlipmap \rightarrow \psphspace{\domdim}{}{\oV_d}.
\end{align}
\end{definition}

While the inclusion map $\iota$ is not well-defined on Jacobian equivalence classes, the parametrized signature depends only on the Jacobian minors of $\bx \in \lipmap$, based on the choice of elements in $W$, and the corresponding Jacobians in~\Cref{eq:jacobiansW}. Therefore, the parametrized signature is well-defined on $\Jlipmap$.

\subsection{Injectivity} In this section, we show that the parametrized signature is injective.
The idea behind the proof is to show that for any exponent vector $\bc = (c_1, \ldots, c_\domdim) \in \N^{\domdim}$ and any $P \in \cO_{\domdim,\coddim}$, there exists a linear combination of the parametrized monomials $\oPhi_m^{\cP, \bpi}$ which is equal to
\begin{align*}
	\int_{\square^{\domdim}} \bs^{\bc} J[\bx_P](\bs) d\bs
\end{align*}
for all $\bx \in \lipmap$, where $\bs^{\bc} \coloneqq s_1^{c_1} \ldots s_{\domdim}^{c_\domdim}$. In other words, the following lemma holds.

\begin{lemma}
\label{lem:injectivity_1}
	For any $\bc \in \N^{\domdim}$, and any $P \in \cO_{\domdim,\coddim}$, there exists a linear combination of mapping space monomials, denoted using $\ell_{\bc, P} \in \phspace{\domdim}{}{\oV_\domdim}$ and defined by
	\begin{align*}
	    \ell_{\bc,P} = \sum_{i=1}^r \lambda_i \, e^{\cP_i, \bpi_i},
	\end{align*}
	where $\lambda_i \in \R$ and $(\cP_i, \bpi_i) \in W^m \times \Sigma_m^\domdim$, such that
	\begin{align}
		\langle \ell_{\bc, P}, \oPhi(\bx) \rangle = \int_{\square^{\domdim}} \bs^{\bc} J[\bx_P](\bs) d\bs.
	\end{align}
\end{lemma}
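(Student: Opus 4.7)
The strategy exploits the fact that $J[\obx_U](\bs) \equiv 1$, so that inserting $U$-slots into the forms index converts signature monomials into weighted integrals of $J[\bx_P]$ over $\square^\domdim$. Fix $M \coloneqq \max_{j \in [\domdim]} c_j$, set $m \coloneqq M + 1$, and choose the forms index $\cP \coloneqq (U, U, \ldots, U, P) \in W^m$ with $U$ in the first $m-1$ slots and $P$ at the end. Since $J[\obx_U] \equiv 1$ and $J[\obx_P] = J[\bx_P]$, the integrand of $\oPhi_m^{\cP, \bpi}(\bx)$ collapses to $J[\bx_P](t_{m,1}, \ldots, t_{m,\domdim})$, so
\[
\oPhi_m^{\cP, \bpi}(\bx) = \int_{\intdom{m}{\domdim}{\bpi}} J[\bx_P](t_{m,1}, \ldots, t_{m,\domdim})\, d\bt,
\]
depending on $\bt$ only through the top-indexed coordinate in each axis.

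Next, I would evaluate this monomial in closed form. For each $j \in [\domdim]$, set $s_j \coloneqq t_{m,j}$ and integrate out $\{t_{i,j} : i \neq m\}$. Writing $k_j \coloneqq \pi_j^{-1}(m) - 1 \in \{0, 1, \ldots, M\}$, the constraint from $\Delta^m_{\pi_j}$ forces exactly $k_j$ of those remaining variables to lie (increasingly ordered) in $(0, s_j)$ and the remaining $M - k_j$ to lie (increasingly ordered) in $(s_j, 1)$. The Lebesgue measure of this set equals $s_j^{k_j}(1-s_j)^{M-k_j}\big/\bigl(k_j!(M-k_j)!\bigr)$. By Fubini,
\[
\oPhi_m^{\cP, \bpi}(\bx) = \int_{\square^\domdim} J[\bx_P](\bs) \prod_{j=1}^{\domdim} \frac{s_j^{k_j}(1-s_j)^{M-k_j}}{k_j!(M-k_j)!}\, d\bs,
\]
so the monomial depends on $\bpi$ only through the tuple $(k_1, \ldots, k_\domdim)$, and every tuple in $\{0, \ldots, M\}^\domdim$ is realized by some $\bpi \in \Sigma_m^\domdim$.

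Finally, I invoke the Bernstein basis: $\{s^k(1-s)^{M-k} : k = 0, 1, \ldots, M\}$ spans the space of univariate real polynomials of degree at most $M$, so for each $c_j \leq M$ there exist constants $\gamma_{j,0}, \ldots, \gamma_{j,M} \in \R$ with
\[
s_j^{c_j} = \sum_{k_j=0}^{M} \gamma_{j, k_j} \cdot \frac{s_j^{k_j}(1-s_j)^{M-k_j}}{k_j!(M-k_j)!}.
\]
Multiplying these identities for $j = 1, \ldots, \domdim$, choosing for each tuple $(k_1, \ldots, k_\domdim) \in \{0,\ldots,M\}^\domdim$ a single representative $\bpi(k_1, \ldots, k_\domdim) \in \Sigma_m^\domdim$ with $\pi_j^{-1}(m) = k_j + 1$, and setting
\[
\ell_{\bc, P} \coloneqq \sum_{(k_1, \ldots, k_\domdim) \in \{0, \ldots, M\}^\domdim} \left(\prod_{j=1}^{\domdim} \gamma_{j, k_j}\right) e^{\cP, \bpi(k_1, \ldots, k_\domdim)}
\]
yields the required functional by linearity of integration. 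No serious obstacle is anticipated: once one notices that padding the forms index with $U$-slots turns signature monomials into Bernstein-weighted polynomial averages of $J[\bx_P]$, extracting arbitrary monomial moments $\bs^\bc$ reduces to a standard polynomial basis change.
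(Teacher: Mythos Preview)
Your proposal is correct and follows essentially the same route as the paper: you use the identical forms index $\cP = (U,\ldots,U,P)$ with $m = \max_j c_j + 1$, reduce the monomial to a Bernstein-type weighted integral of $J[\bx_P]$ via the same Fubini/volume computation, and then express $\bs^{\bc}$ as a linear combination of the resulting product weights. The only cosmetic difference is that you invoke the Bernstein basis by name, whereas the paper proves the one-variable spanning statement $\sum_i \lambda_i\, t^{b_i-1}(1-t)^{m-b_i} = t^c$ by a short descending induction on $c$.
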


Before we prove this lemma, we will show how it is used to prove the injectivity theorem. This will require a second lemma.

\begin{lemma}
\label{lem:injectivity_2}
	Suppose $g \in L^\infty(\square^{\domdim}, V)$. Then, if
	\begin{align}
		\int_{\square^{\domdim}} \bs^\bc g(\bs) d\bs = 0
	\end{align}
	for all $\bc \in \N^d$, then $g = 0$.
\end{lemma}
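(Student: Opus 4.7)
The plan is to invoke the Stone--Weierstrass theorem together with the density of continuous functions in $L^1(\square^\domdim)$. Since $V$ is finite-dimensional, we may fix an orthonormal basis $(e_1, \ldots, e_\coddim)$ and write $g = \sum_{k=1}^\coddim g_k\, e_k$, with each $g_k \in L^\infty(\square^\domdim, \R)$. The hypothesis $\int_{\square^\domdim} \bs^\bc g(\bs)\, d\bs = 0$ for every $\bc \in \N^\domdim$ then yields $\int_{\square^\domdim} \bs^\bc g_k(\bs)\, d\bs = 0$ for every $k$ and every $\bc$. It therefore suffices to prove the lemma in the scalar case $V = \R$; we assume this reduction for the remainder.

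Next I would observe that the monomial assumption says exactly that $g$ annihilates every polynomial on $\square^\domdim$. By the Stone--Weierstrass theorem, the algebra of polynomials in $s_1, \ldots, s_\domdim$ is uniformly dense in $C(\square^\domdim, \R)$; and since $g \in L^\infty(\square^\domdim)$ is bounded and $\square^\domdim$ has finite Lebesgue measure, the functional $f \mapsto \int_{\square^\domdim} f(\bs)\, g(\bs)\, d\bs$ is continuous on $(C(\square^\domdim), \|\cdot\|_\infty)$. Passing to uniform limits of polynomials therefore gives
\begin{equation*}
    \int_{\square^\domdim} f(\bs)\, g(\bs)\, d\bs = 0 \quad \text{for all } f \in C(\square^\domdim, \R).
\end{equation*}

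Finally, to conclude $g = 0$ as an element of $L^\infty(\square^\domdim, \R)$ (that is, almost everywhere), I would use the density of $C(\square^\domdim, \R)$ in $L^1(\square^\domdim, \R)$. Given any measurable set $E \subset \square^\domdim$, by outer regularity of Lebesgue measure there exist continuous $f_n \in C(\square^\domdim, \R)$ with $\|f_n - \one_E\|_{L^1} \to 0$, and then
\begin{equation*}
    \left| \int_E g(\bs)\, d\bs \right| = \left| \int_{\square^\domdim} (\one_E - f_n)(\bs)\, g(\bs)\, d\bs \right| \leq \|g\|_\infty \cdot \|f_n - \one_E\|_{L^1} \xrightarrow{n \to \infty} 0.
\end{equation*}
Hence $\int_E g = 0$ for every measurable $E$, which forces $g = 0$ almost everywhere, completing the proof. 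No step presents a genuine obstacle; the only mild point of care is ensuring that the boundedness of $g$ is used to justify the uniform-limit interchange and the final $L^1$-approximation bound, both of which hold because $\square^\domdim$ has finite measure.
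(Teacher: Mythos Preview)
Your proof is correct and follows essentially the same approach as the paper: use Stone--Weierstrass to pass from vanishing against monomials to vanishing against all continuous functions, then conclude $g=0$. The paper phrases the final step via Riesz duality (the measure $\mu_g = g\,d\bs$ is the zero functional on $C(\square^\domdim,\R)$, hence the zero measure), whereas you spell it out by approximating indicator functions in $L^1$; your version is more explicit, and you also make the reduction to the scalar case explicit, but the underlying argument is the same.
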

\begin{proof}
	Let $\R[s_1, \ldots, s_{\domdim}]$ denote the space of polynomials in $d$ variables viewed as functions $Q: \square^{\domdim} \rightarrow \R$. By the Stone-Weierstrass theorem, $\R[s_1, \ldots, s_{\domdim}]$ is dense in $C(\square^{\domdim}, \R)$, the space of continuous functions under the uniform topology. Next, we treat $\mu_g = g(\bs) d\bs$ as a finite Borel measure on $\square^{\domdim}$, where
	\begin{align*}
		\mu_g(\bs^\bc) = \int_{\square^{\domdim}} \bs^\bc g(\bs) d\bs.
	\end{align*}
	Then, since $\R[s_1, \ldots, s_{\domdim}]$ is dense in $C(\square^{\domdim}, \R)$, and $\mu_g(\bs^\bc) = 0$ for all $ \bc \in \N^d$, we have $\mu_g(f) = 0$ for all $f \in C(\square^{\domdim}, \R)$. The space of finite Borel measures on $\square^{\domdim}$ is the dual space of $C(\square^{\domdim}, \R)$, and the fact that $\mu_g(f) = 0$ for all $f \in C(\square^{\domdim}, \R)$ implies that $\mu_g = 0$. Thus, $g = 0$.
\end{proof}

Using these two lemmas, we can prove the parametrized injectivity theorem.

\begin{theorem}
\label{thm:parametrized_injectivity_first}
	The parametrized mapping space signature
	\begin{align*}
		\oPhi: \Jlipmap \rightarrow \psphspace{\domdim}{}{\oV_\domdim}
	\end{align*}
	is injective.
\end{theorem}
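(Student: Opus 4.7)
The plan is to combine Lemmas~\ref{lem:injectivity_1} and~\ref{lem:injectivity_2} in the natural way: the first produces, for each monomial test function $\bs^\bc$ and each projection $P \in \cO_{\domdim,\coddim}$, a finite linear functional on $\psphspace{\domdim}{}{\oV_\domdim}$ that evaluates on $\oPhi(\bx)$ to the integral $\int_{\square^\domdim} \bs^\bc J[\bx_P](\bs)\, d\bs$; the second says that these integrals determine the Jacobian minors $J[\bx_P]$ in $L^\infty(\square^\domdim, \R)$. Since an element of $\Jlipmap$ is determined by its collection of Jacobian minors (by definition of $\sim_\Jac$), this will give injectivity.

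Concretely, suppose $\bx, \by \in \Jlipmap$ satisfy $\oPhi(\bx) = \oPhi(\by)$. First, I would fix an arbitrary $P \in \cO_{\domdim,\coddim}$ and $\bc \in \N^\domdim$, and apply Lemma~\ref{lem:injectivity_1} to produce $\ell_{\bc,P} \in \phspace{\domdim}{}{\oV_\domdim}$ with
\begin{equation*}
    \langle \ell_{\bc,P}, \oPhi(\bx) \rangle = \int_{\square^\domdim} \bs^\bc J[\bx_P](\bs)\, d\bs,
    \qquad
    \langle \ell_{\bc,P}, \oPhi(\by) \rangle = \int_{\square^\domdim} \bs^\bc J[\by_P](\bs)\, d\bs.
\end{equation*}
The assumption $\oPhi(\bx) = \oPhi(\by)$ and the linearity of $\ell_{\bc,P}$ then force
\begin{equation*}
    \int_{\square^\domdim} \bs^\bc \bigl( J[\bx_P](\bs) - J[\by_P](\bs) \bigr)\, d\bs = 0
\end{equation*}
for every $\bc \in \N^\domdim$.

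Next, I would apply Lemma~\ref{lem:injectivity_2} to the scalar $L^\infty$ function $g_P \coloneqq J[\bx_P] - J[\by_P] \in L^\infty(\square^\domdim, \R)$ (using the lemma in the case $V = \R$) to conclude that $J[\bx_P] = J[\by_P]$ as elements of $L^\infty(\square^\domdim, \R)$. Since $P \in \cO_{\domdim,\coddim}$ was arbitrary, assembling the coordinate-wise equalities yields
\begin{equation*}
    \hat{d}\bx = \hat{d}\by \in L^\infty(\square^\domdim, \Lambda^\domdim V),
\end{equation*}
which is exactly the definition of $\bx \sim_\Jac \by$ (Definition~\ref{def:jacobian_equivalence}). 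Hence $\bx$ and $\by$ represent the same class in $\Jlipmap$, proving injectivity.

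The substantive content of the proof is really contained in the two lemmas; once they are in hand, the deduction above is essentially formal. The step I would expect to require the most care in writing (even at the level of this assembly) is verifying that Lemma~\ref{lem:injectivity_2} applies coordinate-by-coordinate in a basis-independent way — i.e., that extracting each scalar Jacobian minor $J[\bx_P]$ via the linear functionals $\ell_{\bc,P}$ really recovers the full $\Lambda^\domdim V$-valued form $\hat{d}\bx$. This is precisely why the restriction set $W = \{U\} \cup \cO_{\domdim,\coddim}$ and the identities~\eqref{eq:jacobiansW} were set up: the parametrization component (via $U$) is what enables Lemma~\ref{lem:injectivity_1} to isolate arbitrary monomial weights $\bs^\bc$ against each individual $J[\bx_P]$, so the real conceptual work has been deferred to the proof of that lemma.
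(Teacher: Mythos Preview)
Your proposal is correct and follows essentially the same route as the paper: assume $\oPhi(\bx)=\oPhi(\by)$, use Lemma~\ref{lem:injectivity_1} to deduce $\int_{\square^\domdim}\bs^\bc(J[\bx_P]-J[\by_P])\,d\bs=0$ for all $\bc$ and $P$, then invoke Lemma~\ref{lem:injectivity_2} to conclude $J[\bx_P]=J[\by_P]$ and hence Jacobian equivalence. Your additional remarks about applying Lemma~\ref{lem:injectivity_2} coordinate-wise and about the role of $W$ are accurate but go slightly beyond what the paper spells out.
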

\begin{proof}
	Suppose $\bx, \by \in \lipmap$ such that $\oPhi(\bx) = \oPhi(\by)$. Let $\ell_{\bc, P} \in \phspace{\domdim}{}{\oV_\domdim}$ be the element from~\Cref{lem:injectivity_1}; then by our hypothesis, we have
	\begin{align*}
		\langle \ell_{\bc, P}, \oPhi(\bx)- \oPhi(\by) \rangle = \int_{\square^{\domdim}} \bs^{\bc} \left( J[\bx_P](\bs) - J[\by_P](\bs)\right) d\bs = 0
	\end{align*}
	for all $\bc \in \N^d$, and every $P \in \cO_{\domdim,\coddim}$. By~\Cref{lem:injectivity_2}, this implies that 
	\begin{align*}
		J[\bx_P](\bs) = J[\by_P](\bs),
	\end{align*}
	for all $P \in \cO_{\domdim,\coddim}$. Therefore, $\bx$ is Jacobian equivalent to $\by$.
\end{proof}

Finally, it remains to prove~\Cref{lem:injectivity_1}.

\begin{proof}[Proof of~\Cref{lem:injectivity_1}]
	Fix $\bc \in \N^{\domdim}$ and $P \in \cO_{\domdim,\coddim}$. The goal of this proof is to find a linear combination of mapping space monomials such that
	\begin{align}
	\label{eq:lincomb_injectivity}
		\sum_{i=1}^r \lambda_i \oPhi_m^{\bpi_i, \cP}(\bx) = \int_{\square^{\domdim}} \bs^\bc J[\bx_P](\bs) d\bs,
	\end{align}
	where $\lambda_i \in \R$, $\bpi_i \in \Sigma_m^{\domdim}$, and $\cP = (U, \ldots, U, P)$ (where $U$ is defined in~\Cref{eq:U_basis}). Note that $\cP$ is fixed in this sum.

	Given this particular $\cP$, and using the identifications of the Jacobians with respect to $U$ and $P$ from~\Cref{eq:jacobiansW}, the mapping space monomial is
	\begin{align}
	\label{eq:V_monomial}
		\oPhi_m^{\cP, \bpi} = \int_{\intdom{m}{\domdim}{\bpi}} J[\bx_P](t_{m,1}, \ldots, t_{m,\domdim}) d \bt.
	\end{align}
	Each component $\pi_j$ of $\bpi$ specifies the order of $t_{1,j}, \ldots, t_{m,j}$, and since the integrand has no $t_{i,j}$ dependence for $i < m$, the integral only depends on $\pi^{-1}_j(m)$. Thus, we let
	\begin{align}
		B = (b_1, \ldots, b_d) = \big( \pi_1^{-1}(m), \ldots, \pi_{\domdim}^{-1}(m)\big),
	\end{align}
	which we treat as a multi-index in $[m]$ of length $\domdim$. For $\bt_m = (t_{m,1}, \ldots, t_{m,\domdim})$, we let
	\begin{align}
	\label{eq:edm}
		E^{m}_B(\bt_m) = \prod_{j=1}^{\domdim} \Delta^{b_j-1}(0, t_{m,j}) \times \Delta^{m - b_j}(t_{m,j},1).
	\end{align}

	We let $\hat{\bt} = (t_{i,j})_{i \in [m-1], j \in [\domdim]}$ parametrize $E^{m}_B(\bt_m)$. Then, the integral in~\Cref{eq:V_monomial} can be rewritten as
	\begin{align*}
		\oPhi_m^{\cP, \bpi}(\bx) = \int_{\bt_m \in \square^{\domdim}} \left( \int_{\hat{\bt} \in E^{m}_B(\bt_m)} d\hat{\bt}\right) J[\bx_P](\bt_m) d\bt_m.
	\end{align*}
	The inner integral is simply the volume of the region $E^{m}_B(\bt_m)$, which we can compute by using the fact that the volume of the $n$-simplex $\Delta^n(a,b)$ is
	\begin{align*}
		\int_{\bt \in \Delta^n(a,b)} d\bt = \frac{(b-a)^n}{n!}.
	\end{align*}
	Then, using the definition in~\Cref{eq:edm}, the volume of $E^{m}_B(\bt_m)$ is given by 
	\begin{align}
		\int_{\hat{\bt} \in E^{m}_B(\bt)} d\hat{\bt} = \prod_{j=1}^d \frac{t_{j}^{b_j-1}}{(b_j-1)!}\frac{(1-t_{j})^{m-b_j}}{(m-b_j)!} = C_{m,B} Q_{m,B}(\bt),
	\end{align}
	where $C_{m,B} \in \R$ is a constant and
	\begin{align}
	    Q_{m,B}(\bt) = \prod_{j=1}^{\domdim} t_j^{b_j-1} (1-t_j)^{m-b_j}.
	\end{align}
	Therefore, our original problem of finding a linear combination of basic monomials in~\Cref{eq:lincomb_injectivity} is reduced to finding some $m \in \N$ and subsets $B_i \subset [m]$ of length $\domdim$ such that a linear combination of the polynomials $Q_{m,B_i}(\bt) \in \R[t_1, \ldots t_\domdim]$ satisfy
	\begin{align}
	\label{eq:qk_lincomb}
		\sum_{i=1}^r \lambda_i Q_{m, B_i}(\bt) = \bt^{\bc},
	\end{align}
	for some $\lambda_i \in \R$. The choice of $m \in \N$ is straightforward: we will choose $m = \max_{j \in [\domdim]} \{c_j\} + 1$. Now, we must find the subsets $B_i$ and coefficients $\lambda_i$. \medskip

	We can further reduce the problem since $Q_{m,B}(\bt)$ is defined in factored form, and thus we can consider each factor individually. Given $m,b \in \N$ such that $b < m$, let
	\begin{align}
		q_{m,b}(t) \coloneqq t^{b-1}(1-t)^{m-b}.
	\end{align}
    Thus, it suffices to show that for any $m \in \N$, and $c \in \N$ such that $c \leq m-1$ there exists a linear combination
	\begin{align}
	\label{eq:qb_lincomb}
		\sum_{i=1}^r \lambda_i q_{m, b_i}(t) = t^c.
	\end{align}
	We fix $m \in \N$, and prove this by induction on $c$ (in descending order). The base case will be $c=m-1$, which is straightforward since
	\begin{align*}
		q_{m,m}(t) = t^{m-1}
	\end{align*}
	Now, we assume the linear combination in~\Cref{eq:qb_lincomb} holds down to $c$, and we will show the case of $c-1$. Consider
	\begin{align*}
		q_{m,c}(t) = t^{c-1}(1-t)^{m-c},
	\end{align*}
	which contains a $t^{c-1}$ term, as well as monomials of degree strictly greater than $c-1$. By the induction hypothesis, each of these monomials can be written as a linear combination of the $q_{m,b}(t)$. Thus,~\Cref{eq:qb_lincomb} holds for any $m,c \in \N$ such that $c \leq m-1$. 
\end{proof}

\subsection{Continuity}
\label{ssec:parametrized_continuity}
Now that the parametrized mapping space signature is shown to be injective, we return to the notion of continuity. We begin by noting that the inclusion in~\Cref{eq:parametrized_lift} appending the parametrization is not a continuous map. In fact, it is not well defined on Jacobian equivalence classes. In particular, suppose $\bx = (x_1, \ldots, x_\coddim) \in \lipmap$. There exist order-preserving injections $P \in \cO_{\domdim, \domdim+\coddim} - W$ such that\footnote{In particular, this is the $P$ that corresponds to the basis element $e_{u,1} \swedge \ldots \swedge e_{u,j-1} \swedge e_{v, k} \swedge e_{u,j+1} \swedge \ldots \swedge e_{u,\domdim}$.}
\begin{align*}
    J[\obx_P](\bs) = \frac{\partial x_k}{\partial s_{j}}(\bs)
\end{align*}
for any $k \in [\coddim]$ and $j \in [\domdim]$. Thus, two maps $\bx, \by \in \lipmap$ which are Jacobian equivalent, $\bx \sim_\Jac \by$, may have different partial derivatives and will generally be in different equivalence classes after appending the parametrization, $\obx \nsim_J \overline{\by}$. For a specific case, consider~\Cref{ex:jacobian_equivalence}.\medskip

However, as discussed in the previous section, the parametrized mapping space signature is well defined on Jacobian equivalence classes due to the specific choice of forms $W$. Furthermore, the parametrized signature is still continuous, which can be proved using the same methods as~\Cref{prop:monomial_continuity}.

\begin{proposition}
\label{prop:parametrized_signature_continuity}
    The parametrized mapping space signature
    \begin{align}
        \oPhi: (\Jlipmap, \met_\infty) \rightarrow \psphspace{\domdim}{}{\oV_\domdim}
    \end{align}
    is continuous.
    Moreover, if $\bx, \by \in \Jlipmap$ and
    \begin{align*}
        L &> \max\{ \|\hat{d}\bx\|_\infty, \|\hat{d}\by\|_\infty, 1\} \\
        \epsilon &> \met_\infty(\bx, \by),
    \end{align*}
    then for all parametrized mapping space monomials, we have
    \begin{align*}
        \left|\oPhi_m^{\cP, \bpi}(\bx) - \oPhi_m^{\cP, \bpi}(\by)\right| < \frac{mL^{m-1}}{(m!)^{\domdim}} \epsilon.
    \end{align*}
\end{proposition}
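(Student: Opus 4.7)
The plan is to adapt the argument of~\Cref{prop:monomial_continuity} essentially verbatim, exploiting the identities in~\Cref{eq:jacobiansW} that reduce each Jacobian minor of the parametrized lift $\obx$ either to the constant $1$ (when $P = U$) or to the corresponding Jacobian minor of $\bx$ (when $P \in \cO_{\domdim,\coddim}$). Consequently, for any level $m$ index $(\cP, \bpi) \in W^m \times \Sigma_m^\domdim$, the integrand defining $\oPhi_m^{\cP,\bpi}(\bx)$ is a product of $m$ scalar quantities depending only on $\bx$, each of which is either a genuine Jacobian minor of $\bx$ or equal to $1$. The same reduction applies to $\oPhi_m^{\cP,\bpi}(\by)$, so the difference of monomials can be analyzed with the same scalar toolkit used for $\Phi$.

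For the pointwise monomial bound, I will invoke the same elementary product inequality used in~\Cref{prop:monomial_continuity}: if $|a_i|, |b_i| \leq L$ and $|a_i - b_i| < \epsilon$ for each $i = 1, \ldots, m$, then $|\prod_i a_i - \prod_i b_i| < m L^{m-1}\epsilon$. Both hypotheses are met: when $P_i = U$ both factors equal $1$ (bounded by $L$ thanks to the hypothesis $L > 1$) and differ by $0$, while for $P_i \in \cO_{\domdim,\coddim}$ the orthonormality of the basis $(e^P)_P$ of $\Lambda^\domdim V$ implies $|J[\bx_{P_i}](\bt_i)| \leq \|\hat{d}\bx(\bt_i)\| \leq \|\hat{d}\bx\|_\infty < L$ and likewise $|J[\bx_{P_i}](\bt_i) - J[\by_{P_i}](\bt_i)| \leq \|\hat{d}\bx(\bt_i) - \hat{d}\by(\bt_i)\| \leq \met_\infty(\bx,\by) < \epsilon$. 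Integrating this pointwise bound over $\intdom{m}{\domdim}{\bpi}$, whose volume is $(m!)^{-\domdim}$, yields the claimed inequality $|\oPhi_m^{\cP,\bpi}(\bx) - \oPhi_m^{\cP,\bpi}(\by)| < \frac{m L^{m-1}}{(m!)^\domdim}\epsilon$.

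For continuity at $\bx \in \Jlipmap$, fix $L > \max\{\|\hat{d}\bx\|_\infty, 1\}$; then for every $\by$ with $\met_\infty(\bx,\by) < \epsilon$ the triangle inequality gives $\|\hat{d}\by\|_\infty < L + \epsilon$, so the monomial bound of the previous paragraph applies with $L$ replaced by $L + \epsilon$. Squaring and summing over the level-$m$ index set $W^m \times \Sigma_m^\domdim$, of cardinality $(\binom{\coddim}{\domdim}+1)^m (m!)^\domdim$, a verbatim repeat of the computation in~\Cref{cor:signature_continuty} gives
\[
\|\oPhi(\bx) - \oPhi(\by)\|^2 \leq \left( \sum_{m=1}^\infty \left(\tbinom{\coddim}{\domdim}+1\right)^m \frac{m^2 (L+\epsilon)^{2(m-1)}}{(m!)^\domdim} \right) \epsilon^2,
\]
a finite quantity (by the ratio test, since the factorial grows as $(m!)^\domdim$ with $\domdim \geq 1$) tending to $0$ as $\epsilon \to 0$. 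I do not anticipate any serious obstacle; the only subtlety is the $L > 1$ assumption, introduced precisely so that the constant Jacobian minors $J[\obx_U] \equiv 1$ arising from the parametrization coordinates can be absorbed into the same $L$-bound used for the genuine Jacobian minors.
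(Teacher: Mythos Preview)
Your proposal is correct and follows essentially the same approach as the paper's own proof, which simply observes that the Jacobian-minor bounds $|J[\obx_P]|, |J[\oby_P]| < L$ and $|J[\obx_P] - J[\oby_P]| < \epsilon$ hold for every $P \in W$ (the case $P = U$ being handled by the hypothesis $L > 1$) and then defers to the argument of~\Cref{prop:monomial_continuity}. Your write-up expands the continuity step with the explicit series bound (using the correct cardinality $(\tbinom{\coddim}{\domdim}+1)^m (m!)^\domdim$ for $W^m \times \Sigma_m^\domdim$), which the paper leaves implicit.
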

\begin{proof}
    We note that $J[\obx_P](\bs), J[\oby_P](\bs) < L$ (in particular, this holds for $P=U$ since $L > 1$) and $|J[\obx_P](\bs) - J[\oby_P](\bs)| < \epsilon$ for all allowable $P \in W$. Thus, the same argument as~\Cref{prop:monomial_continuity} holds.
\end{proof}

\section{Universal and Characteristic Properties} \label{sec:univ_char}
We now have sufficient analytic and algebraic properties of the mapping space signature to prove that it is universal and characteristic, mimicking the case of the path signature. In this section we consider the space $\Jlipmap$ equipped with the Jacobian Lipschitz metric $\met_\infty$. Our approach in this section follows the arguments used for the path signature in~\cite{chevyrev_signature_2018}. We begin by recalling the formal definition of universal and characteristic maps for Hilbert spaces.

\begin{definition}
\label{def:univ_char}
    Suppose $\cX$ is a topological space, $\cH$ is a Hilbert space, $\cF \subset \R^\cX$ is a function class, and $\cF'$ the topological dual. Consider a feature map
    \begin{align*}
        \Phi: \cX \rightarrow \cH.
    \end{align*}
    Suppose that $\langle \ell, \Phi(\cdot)\rangle \in \cF$ for all $ \ell \in \cH$. We say that $\Phi$ is
    \begin{enumerate}
        \item \emph{universal} to $\cF$ if the map
        \begin{align*}
            \iota: \cH \rightarrow \R^\cX, \quad \ell \mapsto \langle \ell, \Phi(\cdot) \rangle
        \end{align*}
        has dense image in $\cF$; and
        \item \emph{characteristic} to $\cG \subset \cF'$, assuming that $\cG$ is a space of measures on $\cX$, if the map
        \begin{align*}
            \kappa : \cG \rightarrow \cH, \quad \mu \mapsto \int_X \Phi(x) d\mu(x)
        \end{align*}
        is injective.
    \end{enumerate}
\end{definition}

The universal and characteristic properties of a feature map are intricately linked through the following duality theorem, which is a reformulation of the corresponding result for kernels in~\cite{simon-gabriel_kernel_2018}.

\begin{theorem}[\cite{chevyrev_signature_2018}]
\label{thm:duality}
    Suppose that $\cF$ is a locally convex topological vector space. A feature map $\Phi$ is universal to $\cF$ if and only if $\Phi$ is characteristic to $\cF'$. 
\end{theorem}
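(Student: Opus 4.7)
The plan is to reduce the biconditional to a standard Hahn--Banach separation argument. Since $\iota(\cH)$ is a linear subspace of the locally convex space $\cF$, a corollary of Hahn--Banach tells us that $\iota(\cH)$ is dense in $\cF$ if and only if the only $\mu \in \cF'$ that annihilates $\iota(\cH)$ is $\mu = 0$. Thus the proof reduces to identifying the annihilator $\iota(\cH)^\perp \subset \cF'$ with the kernel of $\kappa$.

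The second step is to swap integration against $\mu$ with the Hilbert-space pairing against $\ell$. Assuming that $\Phi$ is weakly $\mu$-integrable for every $\mu \in \cF'$, so that $\kappa(\mu) = \int_\cX \Phi(x)\, d\mu(x)$ is a well-defined element of $\cH$ (the Pettis integral, which exists in the paper's concrete setting because $\Phi$ will be continuous and bounded and $\cF'$ a space of finite regular Borel measures), one obtains the identity
\[
\mu\bigl(\iota(\ell)\bigr) \;=\; \int_\cX \langle \ell, \Phi(x)\rangle\, d\mu(x) \;=\; \bigl\langle \ell,\, \kappa(\mu)\bigr\rangle
\]
for every $\ell \in \cH$. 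Invoking the Riesz representation theorem for the Hilbert space $\cH$, the condition that $\mu$ annihilates $\iota(\cH)$, namely $\langle \ell, \kappa(\mu)\rangle = 0$ for all $\ell \in \cH$, is equivalent to $\kappa(\mu) = 0$.

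Combining the two steps: $\Phi$ is universal to $\cF$ if and only if the only $\mu \in \cF'$ with $\kappa(\mu) = 0$ is the zero functional, and by linearity of $\kappa$ this is exactly injectivity of $\kappa$, i.e.\ the characteristic property.

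The main obstacle is the integral-pairing exchange underlying the displayed identity above: one must ensure that $\kappa(\mu)$ genuinely lands in $\cH$ rather than merely in some completion or weak dual. The cleanest route is to show that $\ell \mapsto \mu\bigl(\langle \ell, \Phi(\cdot)\rangle\bigr)$ is a continuous linear functional on $\cH$ and then define $\kappa(\mu)$ by Riesz; continuity typically follows from a uniform bound $\sup_{x \in \cX} \|\Phi(x)\|_\cH < \infty$ combined with $\mu$ being a finite (signed) measure. In the mapping space signature application of the next section this holds after normalization, so the hypothesis is harmless; in the general statement it is implicit in the phrase ``$\langle \ell, \Phi(\cdot)\rangle \in \cF$'' together with the measure-space interpretation of $\cF'$ in Definition~\ref{def:univ_char}.
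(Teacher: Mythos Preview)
The paper does not actually prove this theorem: it is stated with attribution to \cite{chevyrev_signature_2018} (itself a reformulation of \cite{simon-gabriel_kernel_2018}) and no argument is given in the body of the paper. So there is no ``paper's own proof'' to compare against.

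That said, your argument is the standard one and is correct. The Hahn--Banach step (a linear subspace of a locally convex space is dense iff its annihilator in the dual is trivial) together with the Pettis-integral identity $\mu(\iota(\ell)) = \langle \ell, \kappa(\mu)\rangle$ is exactly how this duality is established in the kernel literature. Your closing paragraph also correctly isolates the one genuine technical hypothesis, namely that $\kappa(\mu)$ is well-defined as an element of $\cH$ (weak integrability of $\Phi$ against every $\mu \in \cF'$); in the paper's application this is guaranteed by the graded normalization, so the point is well taken.
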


This result allows us to show that a feature map is characteristic by showing that it is universal, which is generally easier to prove. In the case of ordinary monomials on $V$, where the domain is a compact subset $\cX \subset V$ as discussed in~\Cref{ssec:moment_map}, we can use to use the classical Stone-Weierstrass theorem to show universality with respect to $C(\cX, \R)$. Such an approach holds for the path signature, where the domain is restricted to a compact subset $\cX \subset \Jlippath$. One of the main difficulties when we move to the entire path space or mapping space is the fact that these spaces are not locally compact, and thus we cannot apply the standard Stone-Weierstrass theorem. The approach proposed in~\cite{chevyrev_signature_2018} is to consider the \emph{strict topology}~\cite{giles_generalization_1971} on the space of continuous bounded functions, for which a variant of the Stone-Weierstrass theorem holds. 

\begin{theorem}[\cite{giles_generalization_1971}]
\label{thm:strict_topology}
Let $\cX$ be a metrizable topological space, and $C_b(\cX, \R)$ denote the space of bounded continuous functions on $\cX$. 
\begin{enumerate}
    \item The strict topology\footnote{Let $\cX$ be a topological space. A function $\psi: \cX \rightarrow \R$ \emph{vanishes at infinity} if for all $\epsilon > 0$, there exists a compact set $K \subset \cX$ such that $\sup_{x \in \cX\backslash K} |\psi(x)| < \epsilon$. Let $B_0(\cX,\R)$ be the set of functions that vanish at infinity. The \emph{strict topology}~\cite{giles_generalization_1971, chevyrev_signature_2018} on $C_b(\cX,\R)$ is the topology generated by seminorms
    \begin{align*}
        p_\psi(f) = \sup_{x \in \cX} |f(x) \psi(x)|, \quad \psi \in B_0(\cX, \R).
    \end{align*}}
    on $C_b(\cX, \R)$ is weaker than the uniform topology and stronger than the topology of uniform convergence on compact sets.
    \item If $\cF_0$ is a subalgebra of $C_b(\cX, \R)$ such that for all $x, y\in \cX$, there exists some $f \in \cF_0$ such that $f(x) \neq f(y)$ ($\cF_0$ separates points), and for all $x \in \cX$, there exists some $f \in \cF_0$ such that $f(x) \neq 0$, then $\cF_0$ is dense in $C_b(\cX)$ under the strict topology.
    \item The topological dual of $C_b(\cX, \R)$ equipped with the strict topology is the space of finite regular Borel measures on $\cX$. 
\end{enumerate}
\end{theorem}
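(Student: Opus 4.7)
The plan is to tackle the three parts separately, leveraging the structure of the strict seminorms $p_\psi(f) = \sup_{x \in \cX}|f(x)\psi(x)|$ indexed by $\psi \in B_0(\cX,\R)$.

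For part (1), the inclusion of the strict topology into the uniform topology follows from the trivial estimate $p_\psi(f) \leq \|\psi\|_\infty \|f\|_\infty$, which shows every strict seminorm is dominated by the sup-norm. For the opposite inclusion of the compact-open topology into the strict topology, given any compact $K \subset \cX$ I would construct $\psi_K \in B_0(\cX,\R)$ satisfying $\psi_K \geq \one_K$ pointwise, using metrizability of $\cX$ to take something like $\psi_K(x) = \max(0, 1 - \mathrm{dist}(x,K))$ truncated so as to vanish outside a bounded neighborhood of $K$; then $p_{\psi_K}(f) \geq \sup_K |f|$, which dominates the compact-convergence seminorm associated with $K$.

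For part (2), I would mimic the classical Stone--Weierstrass argument in the strict topology. Given $f \in C_b(\cX,\R)$, a test function $\psi \in B_0(\cX,\R)$, and $\epsilon > 0$, the key is to choose a compact set $K$ where $|\psi|$ carries essentially all the mass of the seminorm --- specifically, $|\psi(x)| < \epsilon/(2\|f\|_\infty + 1)$ outside $K$. On $K$, the hypothesis that $\cF_0$ is a point-separating subalgebra which vanishes nowhere allows a direct application of the classical Stone--Weierstrass theorem on $C(K,\R)$, yielding $g \in \cF_0$ with $\sup_K|f-g|$ arbitrarily small. The principal technical hurdle is controlling $g$ on $\cX \setminus K$ so that $p_\psi(f-g)$ can be bounded there as well; the standard remedy is to multiply the Stone--Weierstrass approximant by a cutoff lying in the strict-closure of $\cF_0$, or to invoke an approximate-identity trick from the subalgebra to ensure the resulting element remains close to $f$ in the $\psi$-weighted sup norm globally.

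For part (3), the plan is to identify strict-continuous linear functionals with finite regular Borel measures on $\cX$. One direction is immediate: any such measure $\mu$ defines a functional $f \mapsto \int f \, d\mu$, and its strict continuity follows by choosing, for each $\epsilon > 0$, a compact $K$ with $|\mu|(\cX \setminus K) < \epsilon$ together with some $\psi \in B_0(\cX,\R)$ bounded below by $1$ on $K$, yielding a bound of the form $|\int f\, d\mu| \leq \|\mu\| \, p_\psi(f) + \epsilon \|f\|_\infty$. The reverse direction --- producing a countably additive measure from a strict-continuous linear functional $L$ --- is the main obstacle, since one must upgrade a compatible family of Riesz measures obtained on each compact subset into a single finite regular Borel measure on all of $\cX$. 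The strict continuity of $L$ is precisely the tightness condition that forces these local Riesz measures to patch consistently and avoid mass escaping to infinity, closing the duality.
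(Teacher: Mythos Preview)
The paper does not prove this theorem; it is stated as a citation from \cite{giles_generalization_1971} (and restated following \cite{chevyrev_signature_2018}) and used as a black box in the proof of \Cref{thm:parametrized_univ_char}. There is therefore no ``paper's own proof'' to compare against --- your proposal goes well beyond what the authors attempt.

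That said, a couple of technical remarks on your sketch. In part~(1), your construction of $\psi_K$ via $\max(0,1-\mathrm{dist}(x,K))$ need not lie in $B_0(\cX,\R)$ for a general metrizable $\cX$, since the sublevel sets $\{\mathrm{dist}(\cdot,K)\leq 1-\epsilon\}$ are not guaranteed to be relatively compact. But note the footnote does not require $\psi$ to be continuous, so you can simply take $\psi_K=\one_K$ directly, which trivially vanishes at infinity and gives $p_{\one_K}(f)=\sup_K|f|$. In part~(2), you correctly flag the hurdle --- controlling the Stone--Weierstrass approximant $g$ off $K$ --- but ``multiply by a cutoff in the strict-closure of $\cF_0$'' and ``approximate-identity trick'' are placeholders, not arguments; this is where the real work of Giles's theorem lies, and your sketch does not supply it. Part~(3) is a fair outline of the standard Riesz-plus-tightness argument.
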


In order to use the strict topology, we must first normalize the mapping space signature such that its constituent monomials, and the finite linear functionals $\ip{\ell, \Phi(\cdot)}: \Jlipmap \rightarrow \R$ for some $\ell \in \phspace{\domdim}{}{V}$, are \emph{bounded} continuous functions. 

\begin{definition}
    Let $\cH$ be a graded Hilbert space. Given $\dgnorm > 0$, the \emph{graded scaling by $\dgnorm$} is a linear map $\delta_\dgnorm : \cH \rightarrow \cH$ defined by $\delta_\dgnorm(\br_m) \coloneqq \dgnorm^m \br_m\label{eq:graded_scaling_def}$ on each degree $m$ vector $\br_m \in \cH$. 
\end{definition}

Suppose $\bx \in \lipmap$ and $\dgnorm \in \R$. We denote by $\dgnorm \bx \in \lipmap$ the pointwise scaling of $\bx$ by $\dgnorm$. Because the Jacobian minor operator scales according to
\begin{align}
\label{eq:scaling_jacobian_minor}
    \hat{d}(\dgnorm \bx) = \dgnorm^\domdim \hat{d}\bx,
\end{align}
scaling of maps is well-defined on Jacobian equiavlence classes.

\begin{proposition}
\label{prop:scaling}
    Let $\bx \in \lipmap$ and $\dgnorm \in \R$. The mapping space signature of a scaled map is
    \begin{align}
        \Phi(\dgnorm\bx) = \delta_{\dgnorm^\domdim} \Phi(\bx).
    \end{align}
\end{proposition}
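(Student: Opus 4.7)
The plan is to verify the identity level by level in the grading, reducing everything to the scaling behavior of the Jacobian minor operator given in~\Cref{eq:scaling_jacobian_minor}. Since both sides of the asserted equality lie in the graded Hilbert space $\psphspace{d}{}{V}$, it suffices to check that for every $m \geq 0$ and every permutation index $\bpi \in \Sigma_m^\domdim$ the identity
\[
\Phi_m^\bpi(\dgnorm \bx) = \dgnorm^{dm}\,\Phi_m^\bpi(\bx)
\]
holds in $(\Lambda^\domdim V)^{\otimes m}$, because by definition $\delta_{\dgnorm^\domdim}$ multiplies the degree $m$ component by $(\dgnorm^\domdim)^m = \dgnorm^{dm}$.

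First I would handle the trivial case $m = 0$, where $\Phi_0 \equiv 1$ and $\delta_{\nu^d}$ acts as the identity on scalars, so nothing needs to be checked. For $m \geq 1$, I would plug in the definition of $\Phi_m^\bpi$ as an iterated integral and apply~\Cref{eq:scaling_jacobian_minor}, which says $\hat{d}(\dgnorm \bx)(\bt) = \dgnorm^\domdim \hat{d}\bx(\bt)$ as elements of $L^\infty(\square^\domdim, \Lambda^\domdim V)$. Substituting this into each of the $m$ tensor factors and pulling the scalars $\dgnorm^\domdim$ outside the integral via multilinearity of the tensor product yields a total factor of $\dgnorm^{dm}$, giving the claimed equality at level $m$.

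Assembling the per-level identities into the full power series element of $\psphspace{d}{}{V}$ then gives $\Phi(\dgnorm \bx) = \delta_{\dgnorm^\domdim}\Phi(\bx)$. I do not anticipate any serious obstacle here: the only subtlety is confirming that the scaling identity $\hat{d}(\dgnorm\bx) = \dgnorm^\domdim \hat{d}\bx$ (which follows from $\hat{d}\bx = \wedge^\domdim d\bx$ and the fact that taking the $\domdim$-fold wedge of a linearly scaled differential introduces exactly $\domdim$ copies of $\dgnorm$) descends to Jacobian equivalence classes, which is automatic since the identity is pointwise almost everywhere. The argument is purely a bookkeeping exercise in homogeneity of the Jacobian minor operator.
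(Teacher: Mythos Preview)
The proposal is correct and follows essentially the same approach as the paper: reduce to each level $m$ and permutation index $\bpi$, apply the scaling identity $\hat{d}(\dgnorm\bx) = \dgnorm^\domdim \hat{d}\bx$ from~\Cref{eq:scaling_jacobian_minor} inside the integral, and extract the scalar $\dgnorm^{\domdim m}$ by multilinearity. Your write-up is slightly more detailed (explicitly treating $m=0$ and remarking on well-definedness on Jacobian equivalence classes), but the argument is identical in substance.
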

\begin{proof}
    It suffices to show that this is the appropriate scaling for each $m \in \N$ and $\bpi \in \Sigma_m^\domdim$. Indeed, by~\Cref{eq:scaling_jacobian_minor}, we have
    \begin{align*}
        \Phi_m^\bpi(\dgnorm \bx) &= \int_{\intdom{m}{\domdim}{\bpi}} \hat{d}(\dgnorm\bx)(\bt_1) \swedge \ldots \swedge \hat{d}(\dgnorm\bx)(\bt_m) = \dgnorm^{\domdim m}\Phi_m^\bpi(\bx).
    \end{align*}
\end{proof}

\begin{definition}
    Suppose $\cH$ is a graded Hilbert space. A \emph{graded normalization} is a continuous injective map of the form
    \begin{align}
    \label{eq:graded_normalization_def}
        \gnorm : \cH &\rightarrow \{ \br \in \cH \, : \, \|\br\| \leq C\} \\
        \br &\mapsto \delta_{\dgnorm(\br)}(\br)\nonumber
    \end{align}    
    where $C>0$ is a constant and $\dgnorm : \cH \rightarrow (0, \infty)$ is a function called the \emph{scaling function}.
\end{definition}

Such graded normalizations exist, as shown in~\cite[Appendix A]{chevyrev_signature_2018}. In the following theorem, we will consider the \emph{normalized signature}, defined by $\gnorm \circ \Phi$, where $\gnorm$ is a graded normalization. By~\Cref{prop:scaling}, for any $\bx \in \Jlipmap$, this is equivalent to
\begin{align}
\label{eq:graded_normalization_rescaling}
    \gnorm\circ \Phi(\bx) = \Phi(\dgnorm(\bx)^{1/\domdim} \bx),
\end{align}
where $\dgnorm: \psphspace{\domdim}{}{V} \rightarrow (0,\infty)$ is the corresponding scaling function. We can now prove our main theorem.

\begin{theorem}
\label{thm:parametrized_univ_char}
    Let $\gnorm: \psphspace{\domdim}{}{\oV_\domdim} \rightarrow \psphspace{\domdim}{}{\oV_\domdim}$ be a graded normalization. The normalized parametrized mapping space signature
    \begin{align*}
        \gnorm \circ \oPhi : \cL \rightarrow \psphspace{\domdim}{}{\oV_\domdim}
    \end{align*}
    \begin{enumerate}
        \item [\bf (AN1)] is continuous, injective and has factorial decay: $\|(\gnorm \circ \oPhi)_m(\bx)\| \leq C^m/(m!)^\domdim$ for a constant $C>0$ depending on $\bx$;
        \item [\bf (AN2)] is universal to $\cF \coloneqq C_b(\cL, \R)$ equipped with the strict topology; and
        \item [\bf (AN3)] is characteristic to the space of finite regular Borel measures on $\cL$.
    \end{enumerate}
\end{theorem}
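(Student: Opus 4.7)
The plan is to address the three claims in order, leveraging the previously established properties of $\oPhi$ and the general duality between universality and characteristicness.

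For \textbf{(AN1)}, continuity of $\gnorm \circ \oPhi$ will follow immediately from \Cref{prop:parametrized_signature_continuity} together with the built-in continuity of $\gnorm$, while injectivity will follow from \Cref{thm:parametrized_injectivity_first} together with the injectivity of $\gnorm$. For factorial decay I will combine the pointwise estimate $|\Phi_m^{\cP,\bpi}(\bx)|\le L^{\domdim m}/(m!)^\domdim$ from the proof of \Cref{prop:factorial_decay} with the rescaling identity \Cref{eq:graded_normalization_rescaling}, which expresses $\gnorm\circ\oPhi(\bx)$ as $\oPhi$ of a rescaled map and hence yields an $\bx$-dependent constant $C$ absorbing $L$, $\dgnorm(\oPhi(\bx))^{1/\domdim}$, and combinatorial dimension factors.

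For \textbf{(AN2)}, I will apply the strict-topology Stone--Weierstrass theorem, \Cref{thm:strict_topology}(2), to the family
\[
\cF_0 \coloneqq \bigl\{\langle \ell,\, \gnorm\circ\oPhi(\cdot)\rangle : \ell \in \phspace{\domdim}{}{\oV_\domdim}\bigr\}.
\]
Boundedness of the image of $\gnorm$ ensures via Cauchy--Schwarz that $\cF_0 \subset C_b(\cL,\R)$, so three conditions remain: (i) subalgebra, (ii) point-separation, and (iii) no common zero. For (i), the key observation is that \Cref{eq:graded_normalization_rescaling} reduces a product $\langle \ell, \gnorm\circ\oPhi(\bx)\rangle \cdot \langle \ell', \gnorm\circ\oPhi(\bx)\rangle$ to a product of linear functionals applied to $\oPhi(\dgnorm(\oPhi(\bx))^{1/\domdim}\bx)$; the shuffle identity of \Cref{thm:ms_shuffle}/\Cref{cor:shuffle_subalgebra} then produces a single $\ell'' \in \phspace{\domdim}{}{\oV_\domdim}$, independent of $\bx$, with $\langle \ell'', \oPhi(\dgnorm(\oPhi(\bx))^{1/\domdim}\bx)\rangle = \langle \ell'', \gnorm\circ\oPhi(\bx)\rangle$, closing the algebra. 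Condition (ii) is immediate from the injectivity established in (AN1). Condition (iii) holds because the degree-zero component of $\gnorm\circ\oPhi(\bx)$ is identically $1$, so the unit linear functional realises the constant function $1$ in $\cF_0$.

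For \textbf{(AN3)}, the strict topology makes $C_b(\cL,\R)$ a locally convex topological vector space whose topological dual, by \Cref{thm:strict_topology}(3), is the space of finite regular Borel measures on $\cL$. The duality theorem \Cref{thm:duality} then converts the universality established in (AN2) directly into characteristicness over this measure space. I expect the main obstacle to sit in (AN2)(i): carefully tracking how the nonlinear scaling $\dgnorm(\oPhi(\bx))^{1/\domdim}$ interacts with products of linear functionals. Once the rescaling interpretation of $\gnorm\circ\oPhi$ is exploited, however, the shuffle identity transfers essentially unchanged and the remainder of the argument is mechanical.
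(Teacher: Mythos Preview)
Your overall strategy matches the paper's proof essentially step for step: (AN1) from \Cref{prop:parametrized_signature_continuity}, \Cref{thm:parametrized_injectivity_first}, and \Cref{prop:factorial_decay}; (AN2) via the strict Stone--Weierstrass theorem applied to $\cF_0$; and (AN3) by duality via \Cref{thm:duality} and \Cref{thm:strict_topology}(3).

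There is one technical slip in your handling of (AN2)(i). You invoke \Cref{eq:graded_normalization_rescaling} to write $\gnorm\circ\oPhi(\bx)=\oPhi\bigl(\dgnorm(\oPhi(\bx))^{1/\domdim}\bx\bigr)$, but that identity is stated for $\Phi$, not for the parametrized signature $\oPhi=\Phi_W\circ\iota$. It fails for $\oPhi$: scaling $\bx\mapsto\lambda\bx$ leaves the parametrization component of $\iota(\bx)$ untouched, so $J[\overline{\lambda\bx}_U]=1$ does not acquire the factor $\lambda^\domdim$, and a monomial $\oPhi_m^{\cP,\bpi}$ with $k$ of the $P_i$ equal to $U$ scales by $\lambda^{\domdim(m-k)}$ rather than $\lambda^{\domdim m}$. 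Hence $\delta_{\lambda^\domdim}\oPhi(\bx)\neq\oPhi(\lambda\bx)$ in general.

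The fix is to bypass the rescaling interpretation entirely and argue directly with the graded scaling. Writing $\lambda=\dgnorm(\oPhi(\bx))$, one has $\gnorm\circ\oPhi(\bx)=\delta_\lambda\oPhi(\bx)$ by definition of $\gnorm$, so $\langle\ell,\gnorm\circ\oPhi(\bx)\rangle=\langle\delta_\lambda\ell,\oPhi(\bx)\rangle$. The shuffle product of \Cref{thm:ms_shuffle} is graded (a level-$m_1$ functional times a level-$m_2$ functional yields a level-$(m_1+m_2)$ functional), and $\delta_\lambda$ multiplies level $m$ by $\lambda^m$, so $\delta_\lambda$ is an algebra homomorphism for the shuffle product. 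Consequently $(\delta_\lambda\ell)\shuffle(\delta_\lambda\ell')=\delta_\lambda(\ell\shuffle\ell')$, and therefore
\[
\langle\ell,\gnorm\circ\oPhi(\bx)\rangle\cdot\langle\ell',\gnorm\circ\oPhi(\bx)\rangle=\langle\ell\shuffle\ell',\gnorm\circ\oPhi(\bx)\rangle,
\]
with $\ell\shuffle\ell'$ independent of $\bx$. This is what the paper means when it cites \Cref{cor:shuffle_subalgebra} without further comment; once you make this adjustment your argument goes through.
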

\begin{proof}
    Continuity and injectivity in the first point is a consequence of~\Cref{prop:parametrized_signature_continuity},~\Cref{thm:parametrized_injectivity_first}, and the fact that graded normalizations are continuous and injective. Furthermore, factorial decay follows from the proof of~\Cref{prop:factorial_decay} and the definition of a graded normalization.

    Next, for any $\ell \in \psphspace{\domdim}{}{\oV_\domdim}$, the function $\langle \ell, \gnorm \circ \oPhi(\cdot) \rangle : \cL \rightarrow \R$ is an element of $\cF$ since the parametrized signature is continuous (\Cref{prop:parametrized_signature_continuity}), and we have applied a graded normalization. Let
    \begin{align*}
        \cF_0 \coloneqq \left\{\langle \ell, \gnorm \circ \oPhi(\cdot) \rangle : \cL \rightarrow \R \, : \, \ell \in \phspace{\domdim}{}{\oV_\domdim}\right\}
    \end{align*}
    be the functionals given by a finite linear combination of basis elements $e^{\cP, \bpi}$. The function space $\cF_0$ forms a subalgebra of $\cF$ through the shuffle product (~\Cref{cor:shuffle_subalgebra}). Next, since the normalized parametrized signature $\gnorm \circ \oPhi$ is injective, $\cF_0$ separates points. Furthermore, for all $\bx \in \cL$, we have $\langle 1, \gnorm \circ \oPhi(\bx)\rangle = 1 \neq 0$. Thus, $\cF_0$ satisfies all the conditions of (2) in~\Cref{thm:strict_topology}, so $\cF_0$ is dense in $\cF$ and $\gnorm \circ \oPhi$ is universal to $\cF$.
    
    Then, by~\Cref{thm:duality}, $\gnorm \circ \oPhi$ is also characteristic to $\cF'$, but according to point (3) of~\Cref{thm:strict_topology}, the dual $\cF'$ is the space of finite regular Borel measures on $\cL$, which completes the proof. 
\end{proof}

\begin{remark}
    In the setting where we choose $\cX \subset \Jlipmap$ to be compact, we can show that the unnormalized parametrized signature is universal to $C(\cX, \R)$ with the uniform topology and characteristic to its dual (regular Borel measures on $\cX$) using the same method as the $\domdim=0$ and $\domdim=1$ case. In particular, we can apply the classical Stone-Weierstrass theorem in this setting to prove the universal property, and the characteristic property follows by duality (\Cref{thm:duality}).
\end{remark}

\section{Conclusions and Future Work}

In this article, we return to the topological origins of the path space monomials as $0$-cochains in Chen's iterated integral cochain model in order to motivate an extension to mapping spaces from higher dimensional domains. Using this perspective, we established the novel notion of a mapping space signature, derived from the $0$-cochains of a cubical generalization of Chen's cochain construction for mapping spaces. The mapping space signature carries rich analytic and algebraic properties which largely coincide with the properties of the moment map and path signature. These are summarized in the following theorem and~\Cref{thm:parametrized_univ_char}.

\begin{theorem}
\label{thm:formal_ms_properties}
    The mapping space signature $\Phi: \Jlipmap \rightarrow \psphspace{\domdim}{}{V}$, where $\psphspace{\domdim}{}{V} \subset \pspvspace{\domdim}{}{V}$ is the Hilbert space of finite norm elements in $\pspvspace{\domdim}{}{V}$, is/has:
    \begin{enumerate}
        \item [\bf (AN1)] \emph{(\Cref{prop:factorial_decay}, \Cref{cor:signature_continuty})} factorial decay at each level $m \in \N$:
        \begin{align*}
            \|\Phi_m(\bx)\|^2 \leq \binom{\coddim}{\domdim}^m \frac{\|J[\bx]\|_{\infty}^{\domdim m}}{(m!)^\domdim}
        \end{align*}
        and is continuous with the Jacobian Lipschitz metric, $\met_\infty$, on $\Jlipmap$;
        \item [\bf (AL0')] \emph{(\Cref{prop:ms_reparametrization_invariance})} coordinate-wise reparametrization invariant: if $\psi_i: \square^1 \rightarrow \square^1$ is a Lipschitz, orientation-preserving bijection, and 
        \[
        \psi(s_1, \ldots, s_\domdim) = (\psi_1(s_1), \ldots, \psi_\domdim(s_\domdim)),
        \]
        then $\Phi(\bx) = \Phi(\bx \circ \psi)$;
        \item [\bf (AL1')] \emph{(\Cref{thm:composition})} a modified Chen's identity given by~\Cref{eq:mapping_signature_composition_formula} when the permutation index is restricted to be the identity;
        \item [\bf (AL2)] \emph{(\Cref{thm:ms_shuffle})} generalized shuffle product structure: the finite linear functionals $\ell \in \phspace{\domdim}{}{V}$, viewed as functions 
        \[
        \langle \ell, \Phi(\cdot) \rangle : \Jlipmap \rightarrow \R,
        \]
        forms a subalgebra of $C(\Jlipmap, \R)$;
        \item [\bf (AL3)] \emph{(\Cref{prop:Bd_equivariance}, \Cref{prop:cod_equivariance})} equivariant with respect to both the hyperoctahedral group, $B_\domdim$, action on the domain and $\GLV$ action on the codomain.
    \end{enumerate}
\end{theorem}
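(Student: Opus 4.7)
Since Theorem \ref{thm:formal_ms_properties} is a consolidating restatement of results already established in Sections \ref{sec: mapping space signature}--\ref{sec:ms_composition}, the plan is to organize the proof as a point-by-point invocation of the corresponding propositions, with a brief verification that each prior statement matches the precise form claimed here. I will write the proof as a numbered list corresponding to the five items, so the reader can cross-reference quickly.

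For \textbf{(AN1)}, the factorial-decay bound is read off the calculation in the proof of Proposition \ref{prop:factorial_decay}: at level $m$ one estimates each monomial by $|\Phi_m^{\cP,\bpi}(\bx)| \le \|J[\bx]\|_\infty^m / (m!)^d$ using $\mathrm{vol}(\intdom{m}{\domdim}{\bpi}) = 1/(m!)^d$, and then sums the squares over the $\binom{n}{d}^m (m!)^d$ basis indices $(\cP,\bpi)$ to obtain the displayed bound; I will state this as a short corollary rather than redoing the estimate. Continuity with respect to $\mu_\infty$ is then exactly Proposition \ref{prop:monomial_continuity}. For \textbf{(AL0$'$)}, the claim is Proposition \ref{prop:ms_reparametrization_invariance} applied coordinate-by-coordinate and lifted to the full tuple $\Phi(\bx) = (\Phi_m^{\cP,\bpi}(\bx))$. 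For \textbf{(AL1$'$)}, the modified composition identity is Theorem \ref{thm:composition}; I will note that the restriction to the identity permutation index $\bid$ is essential and is the reason this item is phrased with a prime.

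For \textbf{(AL2)}, I will invoke Theorem \ref{thm:ms_shuffle} to produce the shuffle identity on monomials, then Corollary \ref{cor:shuffle_subalgebra} to close under pointwise multiplication, and finally combine with \textbf{(AN1)} to place the functionals $\langle \ell, \Phi(\cdot)\rangle$ inside $C(\Jlipmap,\R)$, yielding the subalgebra claim. For \textbf{(AL3)}, the hyperoctahedral equivariance is exactly Proposition \ref{prop:Bd_equivariance}, and the $\GLV$ case is obtained by specializing Proposition \ref{prop:cod_equivariance} to $W = V$ and $A \in \GLV$; I will observe (as the remark after that proposition already notes) that the induced operator $\bA$ restricts to an element of $\mathrm{GL}(\psphspace{\domdim}{}{V})$ because $A^{-1}$ induces the inverse.

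The main obstacle is not mathematical but expository: ensuring the factorial decay bound in \textbf{(AN1)} is stated in the basis-independent quantity $\|J[\bx]\|_\infty$ rather than the Lipschitz constant $\|\bx\|_\Lip$ used in the working inside Proposition \ref{prop:factorial_decay}. I will therefore add one line justifying that the componentwise bound $|J[\bx_{P_i}](\bt_i)| \le \|J[\bx]\|_\infty$ suffices in place of $\|\bx\|_\Lip^d$, after which the rest of that proof goes through verbatim. Beyond this, no new argument is required.
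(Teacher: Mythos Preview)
Your proposal is correct and matches the paper's approach: the theorem is a summary statement in the conclusions section with no standalone proof, and the paper simply points to the referenced propositions and theorems exactly as you do. Your observation about rephrasing the level-$m$ bound in terms of $\|J[\bx]\|_\infty$ rather than the Lipschitz constant used inside Proposition~\ref{prop:factorial_decay} is a sensible clarification that the paper leaves implicit.
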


In particular, the universal and characteristic properties on Jacobian equivalence classes of maps allow us to effectively study functions and measures on mapping spaces. However, the more complex structure of the mapping space structure leads to obstructions in fully generalizing some properties of the path signature, namely {\bf (AL0)} and {\bf (AL1)}.
\begin{itemize}
    \item [\bf(AL0)] It is well known that the path signature is invariant up to tree-like equivalence of paths~\cite{hambly_uniqueness_2010}. In~\Cref{prop:ms_reparametrization_invariance}, we showed that the mapping space signature is invariant under independent reparametrizations of each dimension in the domain, and in~\Cref{cor:trivial_ms_signature}, we constructed a class of maps with trivial mapping space signature. Is it possible to completely characterize the invariance of the mapping space signature? In other words, what is the higher dimensional analogue of tree-like equivalence?
    \item [\bf(AL1)] The path signature preserves the concatenation of paths through Chen's identity, given in~\Cref{eq:chens_identity_paths}. In~\Cref{sec:ms_composition}, we showed that there is an obstruction to a direct generalization of Chen's identity to higher dimensions due to the inability to factor the integrand into components which depend only on a single simplex in the domain. In the case of the identity permutation, we obtain a variation of Chen's identity involving an additional integration step. Is it possible to obtain a general composition formula for all monomials?
\end{itemize}

The path signature has recently become a powerful tool in both pure mathematics, primarily through the development of rough paths~\cite{lyons_differential_2007}, and applied mathematics. The wide applicability of the path signature suggests further study into how the mapping space signature can provide meaningful generalizations in these two domains.
\begin{itemize}
    \item Can we use the mapping space signature to define a higher dimensional analogue of rough paths?
    \item For computational purposes, the classical path signature enjoys the factorial decay property but suffers from the combinatorial explosion of coordinates of tensors. This extends to the mapping space signature but the factorial explosion in coordinates is even worse. 
    Techniques that have been developed to handle this for the path signature (such as kernelization, low-rank techniques, and randomization) could be extended to the case of $d \ge 2$. 
\end{itemize}

Throughout this article, we restricted our focus to Lipschitz maps valued in a finite-dimensional inner product space. However, there are three natural directions of generalization with interesting implications to explore.
\begin{itemize}
    \item The recent generalization of Young integration to higher dimensional differential forms~\cite{zust_integration_2011, stepanov_towards_2020, alberti2019integration} suggests an extension of the mapping space signature to lower regularity $1/p$-H\"older maps for some $p > 1$, analogous to the case of the path signature with $1 \leq p < 2$. While the 1-dimensional Young integral utilizes path increments to sidestep differentiation, the multi-dimensional extension~\cite{zust_integration_2011} uses volume increments to avoid the computation of Jacobians. 
    \item The path signature is defined for paths valued in a Banach space~\cite{lyons_system_2007}, and our coordinate-free definitions here could be extended to the setting of Banach spaces.

    \item Many properties of the path signature can be extended to paths valued in Lie groups \cite{lee_path_2020} and thus it is natural to ask if one can replace the co-domain $V$ by a non-linear space $X$. 
    Indeed, the mapping space monomials derived from the cubical mapping space construction in~\Cref{ssec:ms_0cochains_Rn} can also be defined on an arbitrary manifold $X$, where we must choose a set $\omega_1, \ldots, \omega_m \in \Omega^\domdim(X)$ of differential forms, rather than the standard $\domdim$-forms used throughout this article. If we let $\cP = (\omega_1, \ldots, \omega_m)$, choose $\bpi \in \Sigma_m^\domdim$, and let $\bx \in \smmapman$, then we can define the mapping space monomial of $\bx$ with respect to $(\cP, \bpi)$ as
    \begin{align*}
        \Phi_m^{\cP, \bpi}(\bx) \coloneqq \int_{\intdom{m}{\domdim}{\bpi}} \bigwedge_{i=1}^m \bx^*\omega_i(\bt_i).
    \end{align*}
\end{itemize}

\appendix

\section{A Cubical Variant of Chen's Mapping Space Construction}
\label{apx:cubical_chen_construction}

Chen's original cochain model was developed for path spaces and loop spaces~\cite{chen_iterated_1977}, and was more recently generalized to the setting of mapping spaces~\cite{patras_cochain_2003, ginot_chen_2010} in the language of simplicial sets~\cite{friedman_survey_2012, may_simplicial_1992}. Because we focus on the case of cubical domains, due to its ubiquity in analytic and data applications, we reformulate the mapping space cochain construction using cubical sets~\cite{brown_nonabelian_2011}. This appendix provides a more detailed exposition of the material in~\Cref{sec:chen_construction}. 

Chen's construction is a method to associate a differential form on smooth mapping spaces $C^\infty(|K|, X)$ by using a collection of differential forms on the codomain $X$. Here, $|K|$ is a space constructed from a combinatorial structure called a \emph{cubical set}. However, in order to discuss differential forms, we must place a smooth structure on $|K|, X$, and the mapping space $C^\infty(|K|, X)$. Chen introduced \emph{differentiable spaces}~\cite{chen_iterated_1977}, which generalizes the notion of smooth manifolds, allows one to specify a smooth structure on any set, and enjoys strong category-theoretic properties~\cite{baez_convenient_2011}. We refer the reader to~\cite{chen_iterated_1977, giusti_iterated_2020, baez_convenient_2011} for formal definitions, and we keep our exposition in this appendix independent from such details.

We begin with preliminary background on cubical sets, and then introduce the general reformulation of Chen's construction for mapping spaces in the setting of cubical sets. Finally, we restrict our focus to $0$-cochains on $\smmapR$ and demonstrate how the mapping space monomials arise from this construction.

\subsection{Cubical Sets}

Cubical sets provide a combinatorial and hierarchical description of topological spaces. They come with face, degeneracy, and connection maps that specify how combinatorial cubes can be ``glued together.'' A complete reference is found in~\cite{brown_nonabelian_2011}.

\begin{definition}
\label{def:cubical_set}
    A \emph{cubical set}\footnote{More precisely, this definition is for a \emph{cubical set with connection}~\cite{brown_nonabelian_2011}. Because we exclusively work with cubical sets with connection, we call them cubical sets for simplicity.} $K_\bullet$ is a collection of sets $\{K_p\}_{p=0}^\infty$, equipped with a collection of maps:
    \begin{itemize}
        \item \textbf{face maps}: $d_i^\epsilon : K_{p} \rightarrow K_{p-1}\label{eq:face}$, for $i \in [p]$ and $\epsilon \in \{0,1\}$,
        \item \textbf{degeneracy maps}: $s_i : K_{p} \rightarrow K_{p+1}\label{eq:degeneracy}$, for $i \in [p+1]$, and
        \item \textbf{connection maps}: $g_i: K_{p} \rightarrow K_{p+1}\label{eq:connection}$ for $i \in [p-1]$ and $p \geq 1$,
    \end{itemize}
    which satisfy \emph{cubical compatibility conditions}~\cite{brown_nonabelian_2011}. We say that a cubical set $K_\bullet$ is \emph{finite} if $K_p$ is a finite set for each $p \in \N$.
\end{definition}

An element $a \in K_p$ is called a \emph{$p$-cube}, where the face maps $d^\epsilon_i(a) \in K_{p-1}$ determine the $(p-1)$-cubes that make up the boundary. The degeneracy and connection maps constitute additional combinatorial structure of the cubical set, leading to higher dimensional depiction of lower dimensional cubes. A cube $a \in K_p$ is called \emph{nondegenerate} if there does not exist any $b \in K_{p-1}$ such that $s_i(b) = a$ or $g_i(b) = a$ for any compatible $i$; otherwise, the $p$-cube $a$ is called \emph{degenerate}. Note that any degenerate cube $a \in K_p$ can be written a canonical form as
\begin{align}
\label{eq:canonical_degenerate_form}
    a = s_{i_k} s_{i_{k-1}} \ldots s_{i_1} g_{j_m} g_{j_{m-1}} \ldots g_{j_1} b
\end{align}
where 
\begin{align}
\label{eq:canonical_form_indices}
    1 \leq i_1 < \ldots < i_k \leq p, \quad 1 \leq j_1 < \ldots < j_m \leq p-k-1
\end{align}
and $b \in K_{p-k-m}$ is a nondegenerate cube. In this case, we say that $a$ is a \emph{degeneracy of $b$}.

Associated to each cubical set $K_\bullet$ is its \emph{geometric realization} $|K|$, which is a topological space constructed as the disjoint union of all cubes in $K_\bullet$ glued along the face, degeneracy, and connection maps.
\begin{definition}
\label{def:cocubical_maps}
Define the \emph{coface}, \emph{codegeneracy} and \emph{coconnection} maps as
\begin{itemize}
    \item \textbf{coface maps:} $\delta_i^\epsilon: \square^{p-1} \rightarrow \square^p$ for $i \in [p]$ and $\epsilon \in \{0,1\}$
    \begin{align}
    \label{eq:coface}
        \delta_i^\epsilon(x_1, \ldots, x_{p-1}) \coloneqq (x_1, \ldots, x_{i-1}, \epsilon, x_i, \ldots, x_{p-1});
    \end{align}
    \item \textbf{codegeneracy maps:} $\sigma_i: \square^{p+1} \rightarrow \square^p$ for $i \in [p+1]$
    \begin{align}
    \label{eq:codegeneracy}
        \sigma_i(x_1, \ldots, x_{p+1}) \coloneqq (x_1, \ldots, x_{i-1}, x_{i+1}, \ldots, x_{p+1});
    \end{align}
    \item \textbf{coconnection maps:} $\bx_i: \square^{p+1} \rightarrow \square^p$, for $i \in [p]$ and $p \geq 1$
    \begin{align}
    \label{eq:coconnection}
        \gamma_i(x_1, \ldots, x_{p+1}) \coloneqq (x_1, \ldots, x_{i-1}, \max\{x_i, x_{i+1}\}, x_{i+2}, \ldots, x_{p+1}).
    \end{align}
\end{itemize}
These maps satisfy \emph{cocubical compatibility conditions}~\cite{brown_nonabelian_2011}, which are dual to the cubical compatibility conditions.
\end{definition}

\begin{definition}
    Let $K_\bullet$ be a cubical set. The \emph{geometric realization of $K_\bullet$}, denoted $|K|$, is a topological space defined by
    \begin{align}
    \label{eq:geometric_realization}
        |K| \coloneqq \left.\left( \coprod_{p=0}^\infty K_p \times \square^p \right)\right/\sim,
    \end{align}
    where 
    \begin{align}
    \label{eq:geometric_realization_equivalence_relation}
        (d_i^\epsilon a, \bx) \sim (a, \delta_i^\epsilon \bx), \quad (s_i a, \by) \sim (a, \sigma_i \by), \quad (g_i a, \by) \sim (a, \gamma_i \by),
    \end{align}
    for any $a \in K_p$, $\bx \in \square^{p-1}$, and $\by \in \square^{p+1}$.
\end{definition}

In~\Cref{eq:geometric_realization}, the term $K_p \times \square^p$ associates a topological $p$-cube $\square^p$ to each element $a \in K_p$, which we view as a combinatorial $p$-cube. The equivalence relation then identifies the various disjoint topological cubes according to the face, degeneracy, and connection maps. In particular, the topological cube $\square^p$ associated to each degeneracy $a \in K_p$ of a nondegenerate combinatorial cube $b \in K_{p-q}$ is identified with the topological cube $\square^q$ associated with $b$. Each combinatorial $p$-cube in $K_\bullet$ is equipped with a corresponding \emph{evaluation map}.

\begin{definition}
\label{def:evaluation_map_a}
    Suppose $K_\bullet$ is a cubical set, and $a \in K_p$. The \emph{evaluation map with respect to $a$} is defined by the composition
    \begin{align}
    \label{eq:evaluation_map_a}
        \eta_a: \square^p \hookrightarrow \coprod_{p=0}^\infty K_p \times \square^p \twoheadrightarrow |K|,
    \end{align}
    where the first map is given by the injection $\square^p \mapsto \{a\} \times \square^p$, and the second map is the quotient by the equivalence relation in~\Cref{eq:geometric_realization_equivalence_relation}.
\end{definition}

\begin{example}
\label{ex:2cube}
    Here, we provide describe the standard cubical model, $Z^2_\bullet$ for the $2$-dimensional cube $\square^2$. We begin by defining the non-degenerate cubes, which are labelled as elements of $E^2 \coloneqq \{0, 1, e\}^2$, where the dimension of a non-degenerate cube $a_1 a_2 \in E^2$ is the number appearances of $e$ in the word. The $0$-cubes in $Z_0$ are all non-degenerate, and represent the four vertices, which we represent visually as follows.
    \begin{figure}[hbt!]
    \centering
    		\includegraphics[scale=0.5]{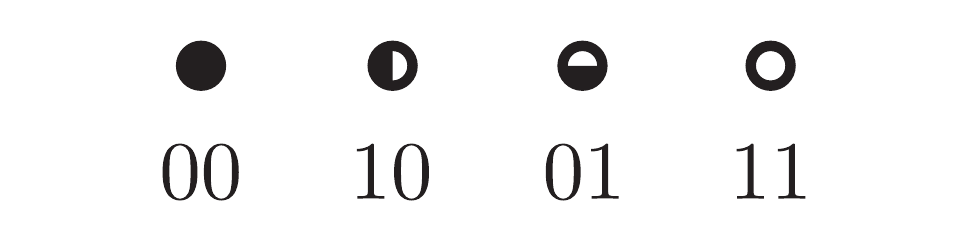}
    \end{figure}
    
    Next, the non-degenerate $1$-cubes in $Z^2_1$ represent the four edges of the square, visualized below.
    \begin{figure}[hbt!]
    \centering
    		\includegraphics[scale=0.5]{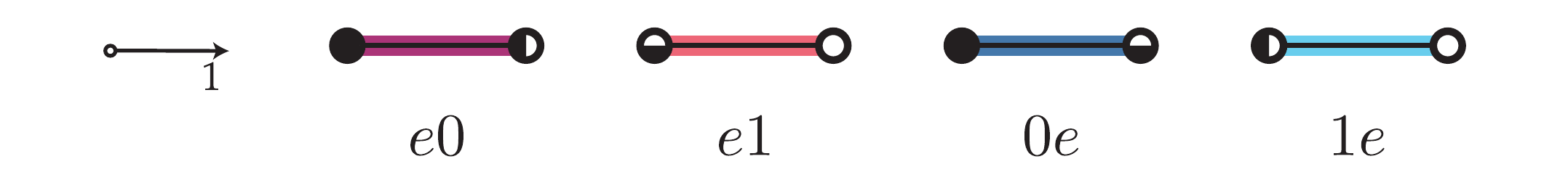}
    \end{figure}
    
    This visual representation allows us to easily read off the faces of the cube; for instance
    \begin{align*}
        \delta_1^0(e0) = 00, \quad \quad \delta_1^1(e0) = 10.
    \end{align*}
    Symbolically, the face map $\delta_1^\epsilon$ changes the unique $e$ into the $\epsilon$. Finally, we have the unique non-degenerate $2$-cube in $Z_2$.
    \begin{figure}[hbt!]
    \centering
    		\includegraphics[scale=0.5]{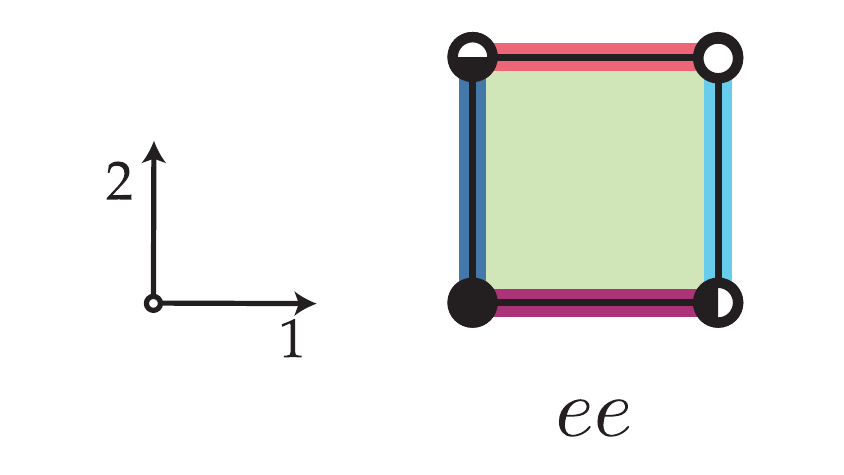}
    \end{figure}
    
    Once again, we can read off the faces of from the visualization as
    \begin{align*}
        \delta_1^0(ee) = 0e, \quad \delta_1^1(ee) = 1e, \quad \delta_2^0(ee) = e0, \quad \delta_2^1(ee) = e1.
    \end{align*}
    Symbolically, $\delta_i^\epsilon$ changes the $\text{i}^{th}$ $e$ into $\epsilon$. In addition to the non-degenerate cubes, $Z^2_1$ and $Z^2_2$ also contain degenerate cubes. In $Z^2_1$, we have one degeneracy for each $0$-cube, where we omit colors for degenerate cubes.
    \begin{figure}[hbt!]
    \centering
    		\includegraphics[scale=0.5]{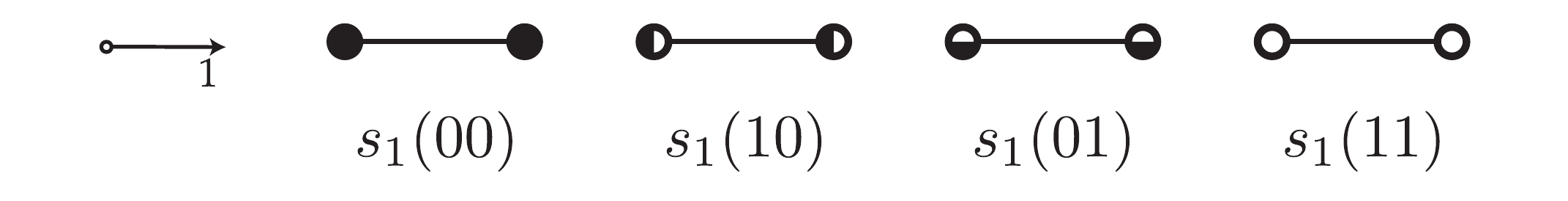}
    \end{figure}
    
    In $K^2_2$, we have one degeneracy for each $0$-cube, and three degeneracies for each non-degenerate $1$-cube; one example of each is shown below.
    \begin{figure}[hbt!]
    \centering
    		\includegraphics[scale=0.5]{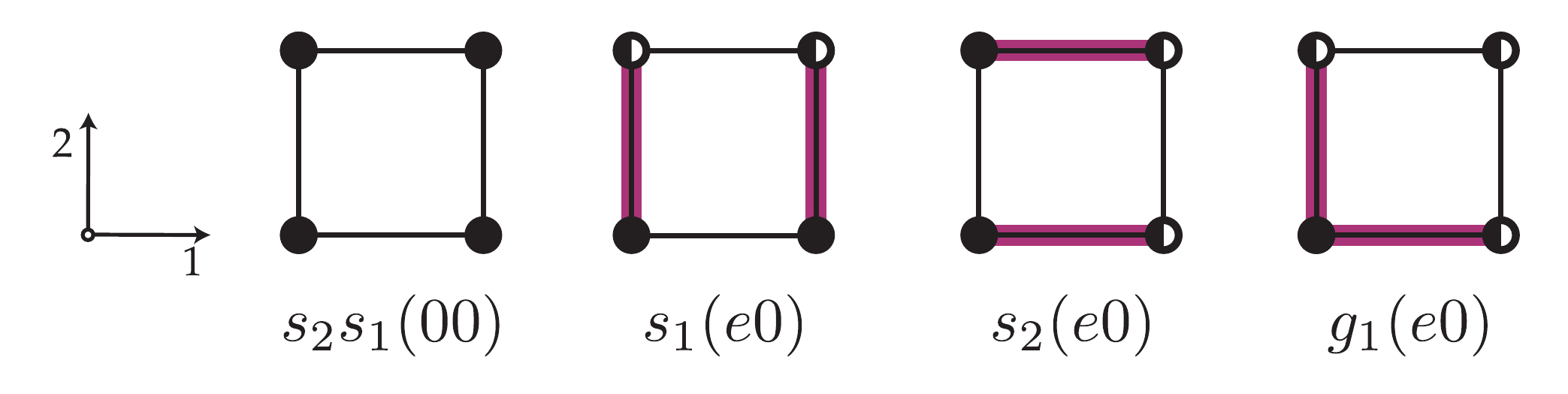}
    \end{figure}
    Note that the face maps of these degeneracies are determined by the cubical identities. There are only non-degenerate cubes in dimension $p \leq 2$, and thus, all cubes in $Z^2_p$, where $p > 2$, are degenerate and can be formed in a similar manner. 
    
    The geometric realization this cubical set can be visualized using the figure of the non-degenerate $2$-cube above: the disjoint union of geometric $p$-cubes is identified along the face maps. Furthermore, all degeneracies will be identified with a unique non-degenerate cube. 
\end{example}

\subsection{Cubical Mapping Space Construction}
Our version of Chen's construction for mapping spaces is based on reformulating the simplicial construction in~\cite{patras_cochain_2003, ginot_chen_2010} to the setting of cubical sets. In order to discuss differential forms on the smooth mapping space, we must first equip it with a smooth structure.

\begin{definition}
\label{def:simplicial_smooth_structure}
    Let $p \geq 0$. The $p$-cube $\square^p$ equipped with the \emph{simplicial smooth structure} is a differentiable space\footnote{This is proved to be a differentiable space by equipping each subsimplex $\Delta^p_\sigma$ with the subspace smooth structure and using the fact that unions and quotients of differentiable spaces are differentiable~\cite{baez_convenient_2011}.} characterized as follows: a map $f: \square^p \rightarrow X$, where $X$ is a manifold, is smooth (in the differentiable space sense) if the restriction to permuted $m$-simplex $f|_{\Delta^p_\sigma}: \Delta^p_\sigma \rightarrow X$ is smooth (in the manifold sense) for all $\sigma \in \Sigma_p$. 
\end{definition}

Throughout this appendix, we will always assume that the unit cubes $\square^p$ are differentiable spaces equipped with the simplicial smooth structure. In particular, the coface, codegeneracy and coconnection maps in~\Cref{def:cocubical_maps} are smooth. With this smooth structure, the geometric realization defined in~\Cref{eq:geometric_realization} is a differentiable space, since the category is complete and cocomplete~\cite{baez_convenient_2011}. Furthermore, smooth mapping spaces between differentiable spaces, such as $C^\infty(|K|,X)$, is also a differentiable space.

\begin{definition}
Let $K_\bullet$ be a finite cubical set, $\#K_p$ be the cardinality of $K_p$, and $A$ be a differential graded algebra. The \emph{cubical Hochschild complex of $A$ over $K_\bullet$} is a differential graded algebra $CH_\bullet^{K_\bullet}(A)$, where the degree $q$ component is given by
\begin{align*}
\label{eq:cubical_hochschild_def}
    CH_q^{K_\bullet}(A) \coloneqq \bigoplus_{p\geq 0} \left(A^{\otimes \#K_p}\right)_{p+q},
\end{align*}
where the differential maps are defined using the face maps of $K_\bullet$.
\end{definition}

The cubical Chen mapping space construction is defined as a map
\begin{align}
    \int: CH_\bullet^{K_\bullet}(\Omega^\bullet(X)) \rightarrow \Omega^\bullet(\smmapKman).
\end{align}
For fixed $p \in \N$, an explicit definition can be given as the composition of the following two maps
\begin{align}
\label{eq:general_cubical_chen_construction}
    \left(\Omega^\bullet(X)\right)^{\otimes \#K_p} \xrightarrow{\ev_{K_\bullet, p}^*} \Omega^\bullet(\square^p \times \smmapKman) \xrightarrow{\int_{\square^p}} \Omega^{\bullet-p}(\smmapKman).
\end{align}
In particular, the degree $p$ construction is performed in three steps:
\begin{enumerate}
    \item For each $a \in K_p$, select differential forms $\omega_a \in \Omega^{q_a}(X)$ and set
    \begin{align}
        \bomega = \bigotimes_{a \in K_p} \omega_a \in \left(\Omega^\bullet(X)\right)^{\otimes \#K_p}.
    \end{align}
    We let $q = \sum_{a \in K_p} q_a$.
    
    \item Define the \emph{degree $p$ evaluation map with respect to $K_\bullet$} by
    \begin{align}
    \label{eq:ev_Kbullet_def}
        \ev_{K_\bullet, p} : \square^p \times \smmapKman \rightarrow X^{\#K_p}\\
        (\bs, \bx) \mapsto (\bx \circ \eta_a(\bs))_{a \in K_p},\nonumber
    \end{align}
    and compute the pullback of $\bomega$ along this map to get
    \begin{align*}
        \ev_{K_\bullet,p}^* \bomega(\bs, \bx) = \bigwedge_{a\in K_p} (\bx \circ \eta_a)^*\omega_a(\bs).
    \end{align*}
    \item Integrate the resulting differential form along $\square^p$ to obtain a $q-p$ form on $\smmapKman$ to obtain a \emph{Chen $(q-p)$-form},
    \begin{align*}
        \int \bomega \coloneqq \int_{\square^p} \bigwedge_{a \in K_p} (\bx \circ \eta_a)^* \omega_a(\bs).
    \end{align*}
\end{enumerate}

\subsection{Mapping Space \texorpdfstring{$0$}{0}-Cochains on \texorpdfstring{$\smmapR$}{smmapR}}
\label{ssec:ms_0cochains_Rn}

We now focus our attention on the $0$-cochains in the case where $X = \R^\coddim$ in order to recover the Chen $0$-cochains discussed in~\Cref{sec:chen_construction} which is then used as the definition of mapping space monomials in~\Cref{def:mapping_space_monomial}. We begin by describing a cubical set $Z^\domdim_\bullet$ which models the unit $\domdim$-cube $\square^\domdim$. The mapping space monomials are derived from forms which only use the degeneracies of the top dimensional non-degenerate cube in $Z^\domdim_\bullet$, and we provide an equivalence between such $p$-cubes and \emph{$\domdim$-ordered subsets of $[p]$}. \medskip

The \emph{standard cubical model for the $\domdim$-cube}, which we denote by $Z_\bullet^\domdim\label{eq:Zbullet_def}$ (or simply by $Z_\bullet$), is the direct generalization of the cubical set from~\Cref{ex:2cube}. The collection of non-degenerate cubes is indexed by $E^\domdim$, where $E \coloneqq \{0, 1, e\}$. Given a word $a= a_1 \ldots a_\domdim \in E^\domdim$, the \emph{dimension} of the corresponding cube is the number of instances of $e$. Given a non-degenerate $p$-cube $a= a_1 \ldots a_p \in E^\domdim$, and $i \in [p]$, suppose $j \in [\domdim]$ be the $\mathrm{i}^{th}$ instance of $e$ in $a$. Then, the face maps for the non-degenerate cube are defined by
\begin{align*}
    \delta_i^\epsilon = a_1 \ldots a_{j-1} \epsilon a_{j+1} \ldots a_\domdim.
\end{align*}
The remaining cubes are degeneracies of the non-degenerate cubes, and can be written using the form given in~\Cref{eq:canonical_degenerate_form}. Suppose $e^\domdim \in Z_\domdim$ is the unique nondegenerate $\domdim$-cube, and let $\widetilde{Z}_p$ be the set of degeneracies of $e^\domdim$ of degree $p$; in other words, any $a \in \widetilde{Z}_p$ is of the form given in~\Cref{eq:canonical_degenerate_form}, with $b = e^\domdim$. We can more easily describe elements of $\widetilde{Z}_p$ using \emph{$\domdim$-ordered subsets}.

\begin{definition}
\label{def:d_ordered_subset}
    Let $\domdim, p \in \N$ with $\domdim \leq p$. A \emph{$\domdim$-ordered subset of $[p]$}, denoted by $\bI = (I^{(1)}, \ldots, I^{(\domdim)})$, is a collection of $\domdim$ nonempty disjoint subsets $I^{(r)} \subset [p]$, for $r \in [\domdim]$, such that if $r < s$, $x \in I^{(r)}$, and $y \in I^{(s)}$, then $x < y$. Let $\ordsubset{\domdim}{p}$ be the set of $\domdim$-ordered subsets of $[p]$.
\end{definition}

\begin{lemma}
\label{lem:degeneracy_ordered_subsets}
    Let $Z_\bullet$ be the standard cubical model for $\square^\domdim$, and $\widetilde{Z}_\bullet$ be the degeneracies of the unique nondegenerate $\domdim$-cube $e^\domdim \in Z_\domdim$. There is a bijection between $\widetilde{Z}_p$ and $\ordsubset{\domdim}{p}$. Thus, each degeneracy $a \in \widetilde{Z}_p$ will be denoted by $a_\bI$ for some $\bI \in \ordsubset{\domdim}{p}$. Furthermore, the evaluation map corresponding to $a_{\bI}$, denoted $\eta_\bI \coloneqq \eta_{a_\bI}: \square^p \rightarrow \square^\domdim$, is given by 
    \begin{align}
    \label{eq:evaluation_map_I}
        \eta_{\bI}(s_1, \ldots, s_p) = \left( \max_{i \in I^{(1)}} \{s_i\}, \ldots, \max_{i \in I^{(d)}}\{s_i\}\right).
    \end{align}
\end{lemma}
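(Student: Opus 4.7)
The plan is to compute $\eta_a$ explicitly from the canonical form of $a$ and then identify the subset structure directly from this expression. Since $a \in \widetilde{Z}_p$ is a degeneracy of $e^\domdim$, it admits a unique canonical form $a = s_{i_k} \cdots s_{i_1} g_{j_m} \cdots g_{j_1} e^\domdim$ as in~\Cref{eq:canonical_degenerate_form}, with $k + m = p - \domdim$. Iterating the equivalence relations in~\Cref{eq:geometric_realization_equivalence_relation} yields
\[
\eta_a(\bs) = \eta_{e^\domdim}\bigl(\gamma_{j_1} \circ \cdots \circ \gamma_{j_m} \circ \sigma_{i_1} \circ \cdots \circ \sigma_{i_k}(\bs)\bigr),
\]
and because $e^\domdim$ is the top nondegenerate cube of $Z^\domdim$, $\eta_{e^\domdim}$ is the canonical identification $\square^\domdim \cong |Z^\domdim|$, which I treat as the identity. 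The composite on the right first drops $k$ input coordinates via the codegeneracies and then takes $m$ pairwise maxima via the coconnections.

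Next I would prove by induction on $k + m$ that any such composite $\square^p \to \square^\domdim$ has the form $\bs \mapsto (\max_{i \in I^{(r)}} s_i)_{r=1}^\domdim$ for some $\bI \in \ordsubset{\domdim}{p}$. The base case $k = m = 0$ is the identity with $I^{(r)} = \{r\}$. For the inductive step I peel off the innermost operation applied to $\bs$: if it is a codegeneracy $\sigma_{i_k}$, the order-preserving injection $\phi: [p-1] \hookrightarrow [p]$ that skips the value $i_k$ lifts the inductively produced subsets $J^{(r)}$ to $I^{(r)} = \phi(J^{(r)})$, leaving $i_k$ as a ``dropped'' index; if it is a coconnection $\gamma_{j_m}$, the assignment $\psi: [p-1] \to 2^{[p]}$ sending $j_m \mapsto \{j_m, j_m+1\}$ and shifting larger indices by one lifts the $J^{(r)}$ to $I^{(r)} = \bigcup_{i' \in J^{(r)}} \psi(i')$. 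In both cases, disjointness and the ordering property of $\ordsubset{\domdim}{p}$ are preserved by direct verification.

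For the bijection, I would show $a \mapsto \bI(a)$ is injective by reconstructing the canonical form of $a$ from $\bI$: the dropped indices $[p] \setminus \bigcup_r I^{(r)}$ recover the codegeneracy indices $\{i_1 < \cdots < i_k\}$ directly, and the partition of the surviving indices into the blocks $I^{(r)}$ prescribes the coconnection indices once these indices are relabeled to $[p-k]$ by the order-preserving bijection. Surjectivity follows by reversing this construction: for any $\bI$ with blocks of sizes $\ell_1, \ldots, \ell_\domdim$, apply $m = \sum_r (\ell_r - 1)$ connections to $e^\domdim$ to grow each of its coordinates into a max of $\ell_r$ entries, followed by $k = p - \domdim - m$ degeneracies at the appropriate positions. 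The max formula for $\eta_\bI$ then follows immediately from the formula derived for $\eta_a$.

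The main obstacle will be the bookkeeping required to translate the canonical-form indices into subsets of $[p]$, since each codegeneracy and coconnection relabels the remaining coordinates of the intermediate cubes. In particular, one must verify that the strict inequalities in~\Cref{eq:canonical_form_indices} for the canonical form correspond exactly to the disjointness and ordering conditions defining $\ordsubset{\domdim}{p}$, and that the inverse reconstruction produces degeneracy and connection indices lying in the correct range at each stage of composition.
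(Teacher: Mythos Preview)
Your proposal is correct and follows essentially the same approach as the paper: both express $\eta_a$ as the composite $\gamma_{j_1}\cdots\gamma_{j_m}\sigma_{i_1}\cdots\sigma_{i_k}$ obtained from the canonical form, identify the codegeneracies as omitting the coordinates indexed by $\{i_1,\ldots,i_k\}$, and read off the blocks $I^{(r)}$ from the coconnection indices. The only stylistic difference is that the paper computes the effect of a maximal consecutive run of coconnections in one shot (yielding a max over a block) and then assembles $\bI$ from these runs, whereas you peel off one operation at a time by induction; both arrive at the same description of $\bI$ and the same reversibility argument for the bijection.
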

\begin{proof}
Suppose $p \geq \domdim$ and $a \in \widetilde{Z}_p$, which we represent in canonical form as in~\Cref{eq:canonical_degenerate_form}, where $b = e^\domdim$. Based on the conditions of the degeneracy and connection indices in~\Cref{eq:canonical_form_indices}, we can represent $a$ using subsets
\begin{align}
    \oI &= (i_1 < \ldots < i_k) \subset [p]. \label{eq:oI_oJ}\\
    \oJ &= (j_1 < \ldots < j_{p-k-\domdim}) \subset [p-k-1]. \nonumber
\end{align}
In particular, we write $a = (\oI, \oJ)$. Note that the total number of degeneracies is fixed at $\#\oI + \#\oJ = p-\domdim$ since the dimension of the nondegenerate cube $e^\domdim$ is fixed.

We will prove this lemma by finding some $\bI \in \ordsubset{\domdim}{p}$ such that the evaluation map of $a\in \widetilde{Z}_p$ coincides with~\Cref{eq:evaluation_map_I}. We use the definition of the evaluation map (\Cref{def:evaluation_map_a}) as a quotient with respect to the cubical equivalence relations~\Cref{eq:geometric_realization_equivalence_relation},  and the definitions of the codegeneracy and coconnection maps in~\Cref{eq:codegeneracy} and~\Cref{eq:coconnection} to find such a $\bI$.

We begin by considering the case where $\oI = \emptyset$ so that the complement is $I = [n]$. In this case, we have $\oJ = (j_1, \ldots, j_m) \subset [p-1]$, where $m = p - d$, and we need to consider the iterated coconnection map
\begin{align*}
	\eta_{(\emptyset, \oJ)} = \gamma_{j_1} \ldots \gamma_{j_m}.
\end{align*}
A consecutive subsequence in $\oJ$ corresponds to consecutive coconnection maps, which can be written as
\begin{align*}
	\gamma_{j-l} \gamma_{j-l+1} \ldots \gamma_j(s_1, \ldots, s_p) &= \gamma_{j-l} \ldots \gamma_{j-1}(s_1, \ldots, \max\{s_j, s_{j+1}\}, \ldots, s_p)\\
	& =  \gamma_{j-l}\ldots \gamma_{j-2}( s_1, \ldots, \max\{s_{j-1}, s_j, s_{j+1}), \ldots, s_p\} \\
	& = (s_1, \ldots, \max\{s_{j-l}, \ldots, s_{j+1}\}, \ldots, s_p).
\end{align*}

We now begin our construction of $\bI$. A maximal consecutive sequence $(j-l, \ldots, j) \subset \oJ$ corresponds to a subset $I^{(r)} = (j-l, \ldots, j, j+1)$. For each maximal consecutive sequence in $\oJ$ (including sequences of length 1), we add the corresponding subset of $I$ to $\bI$ in ascending order. Once every element of $\oJ$ has been accounted for, we add the remaining elements of $I$ into $\bI$ as singletons. Note that this procedure will provide a partition of $I$ made up of exactly $\domdim$ subsets.

Next, we consider the case where $\oI =(i_1, \ldots, i_p)$, so we must consider the coconnections as well as the codegeneracies. However, note that codegeneracies simply omit coordinates, so that
\begin{align*}
	\sigma_{i_1} \ldots \sigma_{i_p}(s_1, \ldots, s_p) = (s_j)_{j \in I},
\end{align*}
where $I$ is the complement of $\oI$ in $[p]$. Now, we can simply repeat the previous construction of the partition with a smaller set $I$.

We have shown that each $\oI$ and $\oJ$, as defined in~\Cref{eq:oI_oJ} corresponds to a distinct  $\bI \in \ordsubset{\domdim}{p}$. Furthermore, we can reverse the construction here to obtain a pair $(\oI, \oJ)$ from some $\bI \in \ordsubset{\domdim}{p}$. Therefore, this provides the desired bijection.
\end{proof}

By restricting the cubical Chen mapping space construction (\Cref{eq:general_cubical_chen_construction}) to these degeneracies, we obtain the construction discussed in~\Cref{sec:chen_construction}. In particular, the construction in~\Cref{eq:restricted_cubical_chen_construction} builds $0$-cochains using the degenerate cubes in $\widetilde{Z}_\bullet$.

\begin{example}
\label{ex:0_cochain}
    Here, we provide an explicit example of a Chen $0$-cochain from the construction in~\Cref{eq:restricted_cubical_chen_construction} for $\domdim=2$ and $p=4$. We define $\bI_1, \bI_2 \in \ordsubset{2}{4}$ by
    \begin{align*}
        I_1^{(1)} = \{1\}, &\quad I_1^{(2)} = \{3\} \\
        I_2^{(1)} = \{1,2\}, &\quad I_2^{(2)} = \{3,4\},
    \end{align*}
    and we let
    \begin{align*}
        \omega_1 &= dx_{P_1} = dx_{P_1(1)} \swedge dx_{P_1(2)}, \quad \quad \omega_2 = dx_{P_2} = dx_{P_2(1)} \swedge dx_{P_2(2)},
    \end{align*}
    be standard $2$-forms in $\R^n$, where $P_1, P_2 \in \cO_{2,\coddim}$. Given $\bx = (x_1, \ldots, x_n) \in \smmapR$, we have
    \begin{align*}
        \bx^*\omega_1 = J[\bx_{P_1}](s_1, s_2) ds_1 \swedge ds_2, \quad \quad \bx^*\omega_2 = J[\bx_{P_2}](s_1, s_2) ds_1 \swedge ds_2,
    \end{align*}
    where $\bx_{P_i} = (x_{P_i(1)}, x_{P_i(2)}) : \square^2 \rightarrow \R^2$, and $J[\bx_{P_i}](s_1, s_2)$ is the determinant of the Jacobian of $\bx_{P_i}$ at $(s_1, s_2) \in \square^2$. Next, the evaluation maps $\eta_{\bI_i}: \square^4 \rightarrow \square^2$ are
    \begin{align*}
        \eta_{\bI_1}(\bt) &= (t_1, t_3) \\
        \eta_{\bI_2}(\bt) & = \left( \max\{t_1, t_2\}, \max\{t_3, t_4\}\right).
    \end{align*}
    The corresponding pullbacks are
    \begin{align*}
        \left(\bx\circ \eta_{\bI_1}\right)^* \omega_1(\bt) & = J[\bx_{P_1}](t_1, t_3) dt_1 \swedge dt_3 \\
        \left(\bx\circ \eta_{\bI_2}\right)^* \omega_2(\bt) & = \left \{
        \begin{array}{ll}
        J[\bx_{P_2}](t_1, t_3) dt_1 \swedge dt_3 &: t_1 > t_2, \, t_3 > t_4 \\
        J[\bx_{P_2}](t_2, t_3) dt_2 \swedge dt_3 &: t_2 > t_1, \, t_3 > t_4 \\
        J[\bx_{P_2}](t_1, t_4) dt_1 \swedge dt_4 &: t_1 > t_2, \, t_4 > t_3 \\
        J[\bx_{P_2}](t_2, t_4) dt_2 \swedge dt_4 &: t_2 > t_1, \, t_4 > t_3
        \end{array}
        \right.
    \end{align*}
    Due to the alternating condition that $dt_i \swedge dt_i = 0$, the only region with a nontrivial wedge product of these two pullbacks is 
    \begin{align*}
        \{0 \leq t_1 < t_2 \leq 1\} \times \{0 \leq t_3 < t_4\leq 1\} = \Delta^2 \times \Delta^2 \subset \square^4.
    \end{align*}
    Finally, the explicit form of the Chen $0$-cochain evaluated on $\bx$ is
    \begin{align*}
        \int_{\Delta^2 \times \Delta^2} J[\bx_{P_1}](t_1, t_3) \cdot  J[\bx_{P_2}](t_2, t_4) \, dt_1 \swedge dt_3 \swedge dt_2 \swedge dt_4. 
    \end{align*}
\end{example}

There is a much larger variety of Chen $0$-cochains for the mapping space $\smmapR$ than the path space $C^\infty(\square^1, \R^\coddim)$ due to the existence of many more degeneracies of the top dimensional cube (since $1$-ordered subsets of $[p]$ are equivalent to elements of $[p]$), as well as the flexibility to choose differential forms up to degree $\domdim$. A detailed study of the variety of Chen $0$-cochains for mapping spaces is provided in~\cite{lee_mapping_2021}. In this article, we have restricted our attention to a certain class of $0$-cochains, which generalizes the path space $0$-cochains which make up the path signature. In particular, all nontrivial differential forms $\omega_i$ will be $\domdim$-forms, thus restricting the total degree to be $p = \domdim m$. Additionally,
\begin{enumerate}
    \item the differential forms $\omega_1, \ldots, \omega_m$ are standard $\domdim$-forms on $\R^\coddim$, given by
    \begin{align*}
        \omega_i = dx_{P_i} = dx_{P_i(1)} \swedge \ldots \swedge dx_{P_i(\domdim)},
    \end{align*}
    where $P_i : [\domdim] \rightarrow [\coddim]$ is an order-preserving injection, and
    \item the collection $(\bI_1, \ldots, \bI_m) \subset \ordsubset{\domdim}{\domdim m}$ of $\domdim$-ordered subsets of $[\domdim m]$ is represented by a permutation $\bpi = (\pi_1, \ldots, \pi_\domdim) \in \Sigma_m^\domdim$, where
    \begin{align}
    \label{eq:restricted_d_ord_subsets}
        I_i^{(j)} = \{(j-1)m + \pi_j(1), (j-1)m + \pi_j(2), \ldots, (j-1)m + \pi_j(i)\}.
    \end{align}
\end{enumerate}
By following the same reasoning as~\Cref{ex:0_cochain}, we can obtain an explicit form for the Chen $0$-cochain. Let $\cP = (P_1, \ldots, P_m)$ denote the collection of standard $\domdim$-forms. By renaming the coordinates of $\square^{\domdim m}$ by
\begin{align*}
    t_{i,j} \coloneqq t_{(j-1)m+i},
\end{align*}
where $i \in [m]$ and $j \in [\domdim]$, the resulting $0$-cochain, which is the \emph{mapping space monomial of $\bx$ with respect to $(\cP, \bpi)$} (\Cref{def:mapping_space_monomial}), is
\begin{align*}
    \Phi^{\cP, \bpi}_m(\bx) \coloneqq \int_{\intdom{m}{\domdim}{\bpi}} \prod_{i=1}^m J[\bx_{P_i}](\bt_i) d\bt.
\end{align*}.

\section{Recursive Definition of the Identity Signature}
\label{apx:recursive_definition}
An important property of the classical path signature is that can be defined recursively, that is the $(m+1)$-th iterated integral is given by integrating the $m$-th iterated integral.
For the general case $d\ge 2$ more care is needed since the integration domain is not just a simplex, but here we show that a similar recursive definition holds where the permutation index is fixed to be the identity.

\begin{lemma}
\label{lem:recursive_definition}
    Let $m \in \N$, $(\cP, \bid) \in \cO_{\domdim,\coddim}^m \times \Sigma_m^\domdim$ be a level $m$ index with the identity permutation index, and $(\ba, \bb) \in (\Delta^2)^d$. For $\cP = (P_1, \ldots, P_m)$, define $\overline{\cP} = (P_1, \ldots, P_{m-1})$. Then,
    \begin{align}
        \Phi^{\cP, \bid}_m(\bx)_{\ba, \bb} = \int_{\square^{\domdim}(\ba, \bb)} \Phi^{\overline{\cP}, \id}_{m-1}(\bx)_{\ba, \bs} \cdot J[\bx_{P_m}](\bs) d\bs.
    \end{align}
\end{lemma}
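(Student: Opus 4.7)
The plan is a direct Fubini-style decomposition of the integration domain, exploiting the fact that the identity permutation singles out a natural ``last point'' in each of the $d$ simplex factors. Unpacking definitions, the left-hand side is
\[
\Phi^{\cP,\bid}_m(\bx)_{\ba,\bb} = \int_{\intdom{m}{\domdim}{\bid}(\ba,\bb)} \prod_{i=1}^m J[\bx_{P_i}](\bt_i)\, d\bt,
\]
where $\intdom{m}{\domdim}{\bid}(\ba,\bb) = \prod_{j=1}^\domdim \Delta^m(a_j,b_j)$ and each factor consists of tuples with $a_j \le t_{1,j} < t_{2,j} < \cdots < t_{m,j} \le b_j$. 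Because $\bpi=\bid$, the coordinate $t_{m,j}$ is the largest entry in the $j$-th simplex factor, so integrating $\bt_m = (t_{m,1},\ldots,t_{m,\domdim})$ last is clean.

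First I would write the product domain as an iterated region: for each choice of $\bs = \bt_m \in \square^{\domdim}(\ba,\bb)$, the remaining variables $(\bt_1,\ldots,\bt_{m-1})$ are constrained by $a_j \le t_{1,j} < \cdots < t_{m-1,j} < s_j$ for every $j \in [\domdim]$, which is precisely $\intdom{m-1}{\domdim}{\bid}(\ba,\bs)$. Applying Fubini (justified since the integrand is bounded and the region has finite measure by the Lipschitz hypothesis on $\bx$), and noting that only the factor $J[\bx_{P_m}](\bt_m) = J[\bx_{P_m}](\bs)$ depends on the outer variable, the integral splits as
\[
\int_{\square^{\domdim}(\ba,\bb)} \left(\int_{\intdom{m-1}{\domdim}{\bid}(\ba,\bs)} \prod_{i=1}^{m-1} J[\bx_{P_i}](\bt_i)\, d\bt_1 \cdots d\bt_{m-1}\right) J[\bx_{P_m}](\bs)\, d\bs.
\]
The inner integral is exactly $\Phi^{\overline{\cP},\bid}_{m-1}(\bx)_{\ba,\bs}$ by Definition~\ref{def:mapping_space_subdomain} (with $\bid \in \Sigma_{m-1}^\domdim$), which yields the claimed formula.

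There is no real obstacle here; the only thing to watch is bookkeeping of indices and the observation that the constraints $a_j \le s_j \le b_j$ emerge automatically from the requirement $a_j \le t_{m-1,j} < s_j \le b_j$ for the inner region to be nonempty (on the complement one integrates over a measure-zero set, which contributes nothing). This decomposition works cleanly precisely because the permutation index is the identity; for a general $\bpi \ne \bid$, the ``maximum'' in each simplex factor would sit at a different index $\pi_j(m)$ in each coordinate, so no single point $\bs$ plays the role of a uniform upper corner, and this is exactly why the statement is restricted to the identity signature.
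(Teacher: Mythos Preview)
Your proposal is correct and follows exactly the approach the paper sketches: expand the definition of the monomial over $\intdom{m}{\domdim}{\bid}(\ba,\bb)$ and apply Fubini to separate the outermost variable $\bt_m$. Your write-up is in fact more detailed than the paper's own one-line proof, and your closing remark about why the argument breaks for general $\bpi$ is a useful observation that the paper elaborates on in the example immediately following.
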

\begin{proof}
    This can be proved by expanding the definition of $\Phi_{m-1}^{\overline{\cP},\id}(\bx)_{\ba,\bs}$ and an application of Fubini's theorem.
\end{proof}

\begin{corollary}
	For $(\ba, \bb) \in (\Delta^2)^d$ and $\bx \in \lipmap$, let
	\begin{align}
	\label{eq:identity_mapping_space_signature}
		\Phi^{\bid}(\bx)_{\ba, \bb} \coloneqq 1 + \sum_{m=1}^\infty \Phi_{m}^{\bid}(\bx)_{\ba, \bb} \in \prod_{m=0}^\infty \left(\Lambda^\domdim V\right)^{\otimes m} \eqqcolon \idpspvspace{\domdim}{}{V}.
	\end{align}
	Then,
	\begin{align}
		\Phi^{\bid}(\bx) = \int_{\square^{\domdim}} \Phi^{\bid}(\bx)_{\bzero,\bs} \otimes J[\bx](\bs) d\bs.
	\end{align}
\end{corollary}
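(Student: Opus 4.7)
The plan is to derive the corollary by applying the preceding lemma componentwise and summing over all levels $m \geq 1$. First, I would fix $m \geq 1$ and expand the level-$m$ identity signature in the orthonormal basis of $(\Lambda^\domdim V)^{\otimes m}$ indexed by forms sequences $\cP = (P_1,\ldots,P_m) \in \cO_{\domdim,\coddim}^m$:
\[
\Phi_m^{\bid}(\bx) \;=\; \sum_{\cP \in \cO_{\domdim,\coddim}^m} \Phi_m^{\cP,\bid}(\bx)\, e^{P_1}\otimes \cdots \otimes e^{P_m}.
\]
Applying \Cref{lem:recursive_definition} with $(\ba,\bb)=(\bzero,\bone)$ to each scalar coefficient, writing $\overline{\cP} = (P_1,\ldots,P_{m-1})$, and adopting the empty-product convention $\Phi_0^{\bid}(\bx)_{\bzero,\bs}=1$ for the $m=1$ base case, yields
\[
\Phi_m^{\cP,\bid}(\bx) \;=\; \int_{\square^\domdim} \Phi_{m-1}^{\overline{\cP},\bid}(\bx)_{\bzero,\bs}\, J[\bx_{P_m}](\bs)\, d\bs.
\]

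Next, I would decouple the sum over $\cP$ into independent sums over $\overline{\cP}$ and $P_m$, exchange these finite sums with the integral, and repackage the inner sum over $P_m$ using the basis expansion $J[\bx](\bs) = \sum_{P \in \cO_{\domdim,\coddim}} J[\bx_P](\bs)\, e^P$ valued in $\Lambda^\domdim V$. This produces the per-grade tensor identity
\[
\Phi_m^{\bid}(\bx) \;=\; \int_{\square^\domdim} \Phi_{m-1}^{\bid}(\bx)_{\bzero,\bs} \otimes J[\bx](\bs)\, d\bs,
\]
which is the claim of the corollary restricted to level $m$. This step is essentially tensor bookkeeping with no analytic content, since each level is a finite sum of bounded integrands.

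Finally, I would sum over $m \geq 1$, interchange the infinite sum with the integral, reindex $k = m-1$, and invoke the definition of $\Phi^{\bid}(\bx)_{\bzero,\bs}$ as the graded series $1 + \sum_{k \geq 1} \Phi_k^{\bid}(\bx)_{\bzero,\bs}$ (so that the $k=0$ term in the reindexed sum is precisely the leading $1$ of the identity signature). This collapses the right-hand side to $\int_{\square^\domdim} \Phi^{\bid}(\bx)_{\bzero,\bs} \otimes J[\bx](\bs)\, d\bs$, matching the stated formula. The only point meriting genuine care is the interchange of the infinite sum with the integral; this will be justified by the factorial-decay bound of \Cref{prop:factorial_decay}, which guarantees absolute convergence in each fixed grade of $\idpspvspace{\domdim}{}{V}$ uniformly in $\bs \in \square^\domdim$. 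No deeper obstacle is anticipated, and the corollary is essentially a summation of the preceding lemma.
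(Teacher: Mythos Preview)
Your proposal is correct and is essentially the same approach as the paper, which simply states that the result is immediate from \Cref{lem:recursive_definition}; you have filled in the componentwise bookkeeping that the paper leaves implicit. One small remark: since $\idpspvspace{\domdim}{}{V}=\prod_{m\geq 0}(\Lambda^\domdim V)^{\otimes m}$ is a direct product and the identity in the corollary is read grade by grade, the interchange of the infinite sum with the integral is purely formal and needs no appeal to factorial decay.
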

\begin{proof}
	This is immediate from~\Cref{lem:recursive_definition}.
\end{proof}

This recursive definition when the permutation index is restricted to the identity shows that the restricted mapping space signature given in~\Cref{eq:identity_mapping_space_signature} can be written as the solution to the differential equation
\begin{align}
    \frac{\partial^d}{\partial s_1 \ldots \partial s_{\domdim}}\Phi^{\bid}(\bx)_{\bzero, \bs} = \Phi^{\id}(\bx)_{\bzero, \bs} \otimes J[\bx](\bs),
\end{align}
with the boundary condition $\Phi^{\bid}(\bx)_{\bzero, \bs} = 1$ whenever $s_j = 0$ for some $j \in [\domdim]$, resembling the differential equation formulation of the path signature. \medskip

    While the recursive definition of the mapping space signature holds for the identity permutation index, it is natural to ask whether we can achieve a similar formulation for an arbitrary permutation index. When the permutation index is repeated copies of the same permutation such as $\bpi = (\pi, \pi, \ldots, \pi) \in \Sigma_m^{\domdim}$ for some $\pi \in \Sigma_m$, permutation invariance in ~\Cref{prop:permutation_invariance} shows that this is equivalent to the setting of the identity permutation. Thus, we consider permutation indices where there are at least two permutations which are distinct. In this setting, we find that a direct generalization is not possible.

\begin{example}
Consider the case where $d = 2$ and $m= 4$, and let $(\cP, \bpi) \in \cO_{2,\coddim}^4 \times \Sigma_4^2$, where we set the permutation index to be $\bpi = (\pi_1, \pi_2)$ with
\begin{align*}
    \pi_1 = (1~2~3~4), \quad \pi_2 = (3~1~4~2).
\end{align*}
The corresponding level $4$ monomial is
\begin{align*}
    \Phi_4^{\cP, \bpi}(\bx) = \int_{\intdom{m}{\domdim}{\bpi}} J[\bx_{P_1}](t_{1,1}, t_{1,2})\cdot J[\bx_{P_2}](t_{2,1}, t_{2,2})\cdot J[\bx_{P_3}](t_{3,1}, t_{3,2})\cdot J[\bx_{P_4}](t_{4,1}, t_{4,2})\, d\bt,
\end{align*}
where the coordinates have the relations
\begin{align*}      
    0 &\leq t_{1,1} < t_{2,1} < t_{3,1} < t_{4,1} \leq 1 \\
    0 &\leq t_{3,2} < t_{1,2} < t_{4,2} < t_{2,2} \leq 1.
\end{align*}
The goal is to isolate the $J[\bx_{P_4}](\bt_4)$ term and integrate out all variables other than the $\bt_4$ term. However, this integral would be
\begin{align*}
    \int_{\Delta^3(0, t_{4,1}) \times \left(\Delta_{(3 \, 1)}^2(0, t_{4,2}) \times \Delta^1_{( 2)}(t_{4,2},1)\right)}J[\bx_{P_1}](t_{1,1}, t_{1,2})\cdot J[\bx_{P_2}](t_{2,1}, t_{2,2})\cdot J[\bx_{P_3}](t_{3,1}, t_{3,2}) \, d\bt,
\end{align*}
which is not of the form of a mapping space monomial due to the domain of integration. 
\end{example}

\bibliographystyle{plain}
\bibliography{mappingspace}

\end{document}